\documentclass[final]{siamltex}

\usepackage{algorithm}
\usepackage{algorithmic}
\usepackage{amsmath}
\usepackage{amsfonts}
\usepackage{multirow}
\usepackage{graphicx,epsf,subfigure,epstopdf}
\usepackage{color}
\usepackage[top=1in, bottom=1in, left=1in, right=1in, dvips,letterpaper]{geometry}
\usepackage{setspace}
\onehalfspacing

\newcommand\argmin{\mathop{\textrm{argmin}}}
\newcommand\crule[1][5cm]{%
  \par
  \nointerlineskip
  \centerline{\hbox to #1{\hrulefill}}%
  \nointerlineskip}

\numberwithin{equation}{section}
\numberwithin{algorithm}{section}
\newtheorem{thm}{{\sc Theorem}}[section]
\newtheorem{lem}{{\sc Lemma}}[section]
\newtheorem{cor}[thm]{Corollary}
\newtheorem{rem}{Remark}[section]
\newtheorem{prop}{Proposition}[section]
\newtheorem{defi}{{\sc Definition}}[section]

\newcommand{\R}{\mathbb{R}}
\newcommand{\E}{\mathbb{E}}

\newcommand{\SFO}{\mathcal{SFO}}

\newcommand{\C}[1]{{\cal {#1}}}

\newcommand{\be}{\begin{equation}}
\newcommand{\ee}{\end{equation}}

\newcommand{\etal}{{\it et al.\ }}

\title{Stochastic Quasi-Newton Methods for \\ Nonconvex Stochastic Optimization}

\author{Xiao Wang
\thanks{School of Mathematical Sciences, University of Chinese Academy of Sciences; Key Laboratory of Big Data Mining and Knowledge Management, Chinese Academy of Sciences, China. Email: wangxiao@ucas.ac.cn. Research of this author was supported in part by UCAS President Grant Y35101AY00 and NSFC Grant 11301505.}
\and Shiqian Ma
\thanks{Department of Systems Engineering and Engineering Management, The Chinese University of Hong Kong, Shatin, N. T., Hong Kong, China. Email: sqma@se.cuhk.edu.hk. Research of this author was supported in part by the Hong Kong Research Grants Council General Research Fund (Grant 14205314).}
\and Donald Goldfarb
\thanks{Department of Industrial Engineering and Operations Research, Columbia University, New York, NY, USA. Email: goldfarb@columbia.edu. Research of this author was supported in part by NSF Grant CCF-1527809.}
\and Wei Liu
\thanks{Tencent AI Lab, Shenzhen, China. Email: wliu@ee.columbia.edu.}
}

\begin{document}

\maketitle


\begin{abstract}
In this paper we study stochastic quasi-Newton methods for nonconvex stochastic optimization, where we assume that noisy information about the gradients of the objective function is available via a stochastic first-order oracle ($\SFO$). We propose a general framework for such methods, for which we prove almost sure convergence to stationary points and analyze its worst-case iteration complexity. When a randomly chosen iterate is returned as the output of such an algorithm, we prove that in the worst-case, the $\SFO$-calls complexity is $O(\epsilon^{-2})$ to ensure that the expectation of the squared norm of the gradient is smaller than the given accuracy tolerance $\epsilon$. We also propose a specific algorithm, namely a stochastic damped L-BFGS (SdLBFGS) method, that falls under the proposed framework. {Moreover, we incorporate the SVRG variance reduction technique into the proposed SdLBFGS method, and analyze its $\SFO$-calls complexity. Numerical results on a nonconvex binary classification problem using SVM, and a multiclass classification problem using neural networks are reported.}

\vspace{0.8cm}

\noindent {\bf Keywords:} {Nonconvex Stochastic Optimization, Stochastic Approximation, Quasi-Newton Method, Damped L-BFGS Method, Variance Reduction}

\vspace{0.5cm}

\noindent {\bf Mathematics Subject Classification 2010:} 90C15; 90C30; 62L20; 90C60

\end{abstract}

\section{Introduction}\label{sec:intro}

In this paper, we consider the following stochastic optimization problem:
\begin{equation}\label{orig-prob}
\min_{x\in\R^n}\quad  f(x) = \E[F(x,\xi)],
\end{equation}
where $F:\R^{n}\times \R^{d} \to \R$ is continuously differentiable and possibly nonconvex, $\xi\in\R^d$ denotes a random variable with distribution function $P$, and $\E[\cdot]$ denotes the expectation taken with respect to $\xi$.
In many cases the function $F(\cdot,\xi)$ is not given explicitly and/or the distribution function $P$ is unknown, or the function values and gradients of $f$ cannot be easily obtained and only noisy information about the gradient of $f$ is available. In this paper we assume that noisy gradients of $f$ can be obtained via calls to a {\it stochastic first-order oracle} ($\SFO$). Problem \eqref{orig-prob} arises in many applications in statistics and machine learning \cite{mbps09,ShaiBen14}, mixed logit modeling problems in economics and transportation \cite{bbt00,bct06,hg03} as well as many other areas. {A special case of \eqref{orig-prob} that arises frequently in machine learning is the empirical risk minimization problem
\be\label{sum-f-i}
\min_{x\in\R^n}\quad f(x)=\frac{1}{T}\sum_{i=1}^T f_i(x),
\ee
where $f_i:\R^{n}\to \R$ is the loss function corresponds to the $i$-th sample data, and $T$ denotes the number of sample data and is assumed to be extremely large.
}

The idea of employing stochastic approximation (SA) to solve stochastic programming problems can be traced back to the seminal work by Robbins and Monro \cite{rm51}. The classical SA method, also referred to as stochastic gradient descent (SGD), mimics the steepest gradient descent method, i.e., it updates iterate $x_k$ via
$x_{k+1} = x_k - \alpha_k g_k$,
where the stochastic gradient $g_k$ is an unbiased estimate of the gradient $\nabla f(x_k)$ of $f$ at $x_k$, and $\alpha_k$ is the stepsize. The SA method has been studied extensively in \cite{C54,e83,g78,p90,pj92,rs86,s58}, where the main focus has been the convergence of SA in different settings. Recently, there has been a lot of interest in analyzing
the worst-case complexity of SA methods, stimulated by the complexity theory developed by Nesterov for first-order methods for solving convex optimization problems \cite{Nesterov-1983,NesterovConvexBook2004}.
Nemirovski \etal \cite{njls09} proposed a mirror descent SA method for solving the convex stochastic programming problem $x^*:=\argmin \{ f(x)\mid x\in X\}$, where $f$ is nonsmooth and convex and $X$ is a convex set, and proved that for any given $\epsilon>0$, the method needs $O(\epsilon^{-2})$ iterations to obtain an $\bar{x}$ such that $\E[f(\bar{x})-f(x^*)]\le\epsilon$. Other SA methods with provable complexities for solving convex stochastic optimization problems have also been studied in \cite{gl12,jntv05,jrt08,l12,lns12,bach-nips-2012,bach-nips-2013,SAGA-2014,Bach-sgd-2014,shamir-zhang-icml-2013,xiao-zhang-siopt-2014}.

Recently there has been a lot of interest in SA methods for stochastic optimization problem \eqref{orig-prob} in which $f$ is a nonconvex function. In \cite{B98}, an SA method to minimize a general cost function was proposed by Bottou and proved to be convergent to stationary points. Ghadimi and Lan \cite{gl13} proposed a randomized stochastic gradient (RSG) method that returns an iterate from a randomly chosen iteration as an approximate solution. It is shown in \cite{gl13} that to return a solution $\bar{x}$ such that $\E[\|\nabla f(\bar{x})\|^2]\le\epsilon$, where $\|\cdot\|$ denotes the Euclidean norm, the total number of $\SFO$-calls needed by RSG is $O(\epsilon^{-2})$. Ghadimi and Lan \cite{gl132} also studied an accelerated SA method for solving \eqref{orig-prob} based on Nesterov's accelerated gradient method \cite{Nesterov-1983,NesterovConvexBook2004}, which improved the $\SFO$-call complexity for convex cases from $O(\epsilon^{-2})$ to $O(\epsilon^{-4/3})$. In \cite{glz13}, Ghadimi, Lan and Zhang proposed a mini-batch SA method {for solving} problems in which the objective function is a composition of a nonconvex smooth function $f$ and a convex nonsmooth function,
and analyzed its worst-case $\SFO$-call complexity. In \cite{dl13}, a method that incorporates a block-coordinate decomposition scheme into stochastic mirror-descent methodology, was proposed by Dang and Lan for a nonconvex stochastic optimization problem $x^*=\argmin\{f(x):x\in X\}$ in which the convex set $X$ has a block structure. More recently, Wang, Ma and Yuan \cite{WangMaYuan13} proposed a penalty method for nonconvex stochastic optimization problems with nonconvex constraints, and analyzed its $\SFO$-call complexity.

In this paper, we study stochastic quasi-Newton (SQN) methods for solving the nonconvex stochastic optimization problem \eqref{orig-prob}.
In the deterministic optimization setting, quasi-Newton methods are more robust and achieve higher accuracy than gradient methods, because {they use approximate second-order derivative information}. Quasi-Newton methods usually employ the following updates for solving \eqref{orig-prob}:
\be\label{quasi}
x_{k+1} = x_k - \alpha_k B_k^{-1}\nabla f(x_k),\quad \mbox{or}\quad x_{k+1} = x_k - \alpha_k H_k\nabla f(x_k)
\ee
where $B_k$ is an approximation to the Hessian matrix $\nabla^2 f(x_k)$ at $x_k$, or $H_k$ is an approximation to $[\nabla^2 f(x_k)]^{-1}$. The most widely-used quasi-Newton method, the BFGS method \cite{broyden-bfgs-1970,Fletcher-bfgs-1970,Goldfarb-bfgs-1970,Shanno-bfgs-1970} updates $B_k$ via
\be\label{bfgs}
 B_{k} =  B_{k-1} + \frac{y_{k-1}y_{k-1}^\top }{s_{k-1}^\top y_{k-1}} - \frac{B_{k-1}s_{k-1}s_{k-1}^\top B_{k-1}}{s_{k-1}^\top B_{k-1}s_{k-1}},
\ee
where $s_{k-1} := x_{k}-x_{k-1}$ and $y_{k-1}:=\nabla f(x_{k}) - \nabla f(x_{k-1})$.
By using the Sherman-Morrison-Woodbury formula, it is easy to derive that the equivalent update to $H_k=B_k^{-1}$ is
\be\label{bfgs1}
H_{k} = (I-\rho_{k-1} s_{k-1} {y}_{k-1}^\top) H_{k-1} (I -\rho_{k-1} {y}_{k-1} s_{k-1}^\top) + \rho_{k-1} s_{k-1} s_{k-1}^\top,
\ee
where $\rho_{k-1}:=1/(s_{k-1}^\top y_{k-1})$.
For stochastic optimization, there has been some work in designing stochastic quasi-Newton methods that update the iterates via \eqref{quasi} using the stochastic gradient $g_k$ in place of $\nabla f(x_k)$.
Specific examples include the following. The adaptive subgradient (AdaGrad) method proposed by Duchi, Hazan and Singer \cite{DHS2011}, which takes $B_k$ to be a diagonal matrix that estimates the diagonal of the square root of the uncentered covariance matrix of the gradients, has been proven to be quite efficient in practice. In \cite{BBG-09}, Bordes, Bottou and Gallinari studied SGD with a diagonal rescaling matrix based on the secant condition associated with quasi-Newton methods. Roux and Fitzgibbon \cite{RF-2010} discussed the necessity of including both Hessian and covariance matrix information in a stochastic Newton type method.
Byrd \etal \cite{BCNN} proposed a quasi-Newton method that uses the sample average approximation (SAA) approach to estimate Hessian-vector multiplications. In \cite{BHNS-14}, Byrd \etal proposed a stochastic limited-memory BFGS (L-BFGS) \cite{Liu-Nocedal-89} method based on SA, and proved its convergence for strongly convex problems. Stochastic BFGS and L-BFGS methods were also studied for online convex optimization by Schraudolph, Yu and G\"{u}nter in \cite{SchYuGue07}. For strongly convex problems,
Mokhtari and Ribeiro proposed a regularized stochastic BFGS method (RES) and analyzed its convergence in \cite{mr10} and studied an online L-BFGS method in \cite{mr14-2}. Recently, Moritz, Nishihara and Jordan \cite{MNJ2015} proposed a linearly convergent method that integrates the L-BFGS method in \cite{BHNS-14} with the variance reduction technique (SVRG) proposed by Johnson and Zhang in \cite{SPVR} to alleviate the effect of noisy gradients. A related method that incorporates SVRG into a quasi-Newton method was studied by Lucchi, McWilliams and Hofmann in \cite{LMH2015}. In \cite{Gower-icml-2016}, Gower, Goldfarb and Richt\'arik proposed a variance reduced block L-BFGS method that converges linearly for convex functions.
It should be noted that all of the above stochastic quasi-Newton methods are designed for solving convex or even strongly convex problems.

{\bf Challenges.}
The key challenge in designing stochastic quasi-Newton methods for nonconvex problem lies in the difficulty in preserving the positive-definiteness of $B_k$ (and $H_k$), due to the non-convexity of the problem and the presence of noise in estimating the gradient.
It is known that the BFGS update \eqref{bfgs} preserves the positive-definiteness of $B_k$ as long as the curvature condition
\be\label{curvature} s_{k-1}^\top y_{k-1} > 0 \ee
holds, which can be guaranteed for strongly convex problem. For nonconvex problem, the curvature condition \eqref{curvature} can be satisfied by performing a line search. However, doing this is no longer feasible for \eqref{orig-prob} in the stochastic setting, because exact function values and gradient information are not available. As a result, an important issue in designing stochastic quasi-Newton methods for nonconvex problems is how to preserve the positive-definiteness of $B_k$ (or $H_k$) without line search.

{\bf Our contributions.} Our contributions (and where they appear) in this paper are as follows.
\begin{itemize}
\item[1.] We propose a general framework for stochastic quasi-Newton methods (SQN) for solving the nonconvex stochastic optimization problem \eqref{orig-prob}, {and prove its almost sure convergence to a stationary point when the step size $\alpha_k$ is diminishing. We also prove that the number of iterations $N$ needed to obtain $\frac{1}{N}\sum_{k=1}^N\E[\|\nabla f(x_k)\|^2]\le \epsilon$, is $N=O(\epsilon^{-\frac{1}{1-\beta}})$, for $\alpha_k$ chosen proportional to $k^{-\beta}$, where $\beta\in(0.5,1)$ is a constant.} (See Section \ref{sec:RSSA})
\item[2.] When a randomly chosen iterate $x_R$ is returned as the output of SQN, we prove that the worst-case $\SFO$-calls complexity is $O(\epsilon^{-2})$ to guarantee $\E[\|\nabla f(x_R)\|^2]\le \epsilon$. (See Section \ref{sec:RSSA-2})
\item[3.] We propose a stochastic damped L-BFGS (SdLBFGS) method that fits into the proposed framework. This method adaptively generates a positive definite matrix $H_k$ that approximates the inverse Hessian matrix at the current iterate $x_k$. Convergence and complexity results for this method are provided. Moreover, our method does not generate $H_k$ explicitly, and only its multiplication with vectors is computed directly. (See Section \ref{sec:LBFGS})
\item[4.] {Motivated by the recent advance of SVRG for nonconvex minimization \cite{SVR-nonconvex,AllenHazan2016-nonconvex}, we propose a variance reduced variant of SdLBFGS and analyze its $\SFO$-calls complexity.} (See Section \ref{sec:sdlbfgs-vr})
\end{itemize}

\section{A general framework for stochastic quasi-Newton methods for nonconvex optimization}\label{sec:RSSA}

In this section, we study SQN methods for the (possibly nonconvex) stochastic optimization problem \eqref{orig-prob}.
We assume that an $\SFO$ outputs a stochastic gradient $g(x,\xi)$ of $f$ for a given $x$, where $\xi$ is a random variable whose distribution is supported on $\Xi\subseteq \R^d$. Here we assume that $\Xi$ does not depend on $x$.

We now give some assumptions that are required throughout this paper.
\begin{itemize}
\item[\textbf{AS.1}]\quad $f: \R^n\to \R$ is continuously differentiable. $f(x)$ is lower bounded by a real number $f^{low}$ for any $x\in\R^n$. $\nabla f$ is globally Lipschitz continuous with Lipschitz constant $L$; namely for any $x,y\in\R^n$,
    \[
    \|\nabla f(x) - \nabla f(y)\| \le L\|x-y\|.
    \]
\item[\textbf{AS.2}]\quad For any iteration $k$, we have
\begin{align}
  a)\quad & \E_{\xi_k}\left[g(x_k,\xi_k)\right]=\nabla f(x_k),\\
  b)\quad & \E_{\xi_k}\left[\|g(x_k,\xi_k)-\nabla f(x_k)\|^2\right]\le \sigma^2, \label{c}
\end{align}
where $\sigma>0$ is the noise level of the gradient estimation, and $\xi_k$, $k=1,2,\ldots$, are independent samples, and for a given $k$ the random variable $\xi_k$ is independent of $\{x_j\}_{j=1}^k$.
\end{itemize}
\begin{rem}
Note that the stochastic BFGS methods studied in \cite{mr10,BHNS-14,mr14-2} require that the noisy gradient is bounded, i.e.,
\be\label{grad-bound}\E_{\xi_k}\left[\|g(x_k,\xi_k)\|^2\right]\le M_g, \ee
where $M_g>0$ is a constant. Our assumption \eqref{c} is weaker than \eqref{grad-bound}.
\end{rem}

Analogous to deterministic quasi-Newton methods, our SQN method takes steps
\be \label{x-k-1}
x_{k+1} = x_k - \alpha_k H_k g_k,
\ee
where $g_k$ is defined as a mini-batch estimate of the gradient:
\begin{equation}\label{G-k}
    g_k = \frac{1}{m_k}\sum_{i=1}^{m_k}g(x_k,\xi_{k,i}),
\end{equation}
and $\xi_{k,i}$ denotes the random variable generated by the $i$-th sampling in the $k$-th iteration. From \textbf{AS.2} we can see that $g_k$ has the following properties:
\be\label{Exp-G-k}
\E[g_k|x_k] = \nabla f(x_k), \quad \E[\|g_k-\nabla f(x_k)\|^2|x_k]\le \frac{\sigma^2}{m_k}.
\ee

\begin{itemize}
\item[{\bf AS.3}]\quad There exist two positive constants $C_{l}, C_{u}$ such that
    \[
    \underline{\kappa} I  \preceq  H_k  \preceq  \bar{\kappa} I, \quad \mbox{ for all } k,
    \]
    where the notation $A\succeq B$ with $A,B\in\R^{n\times n}$ means that $A-B$ is positive semidefinite.
\end{itemize}

We denote by $\xi_k = (\xi_{k,1},\ldots,\xi_{k,m_k})$, the random samplings in the $k$-th iteration, and denote by $\xi_{[k]}:=(\xi_1,\ldots,\xi_k)$, the random samplings in the first $k$ iterations.
Since $H_k$ is generated iteratively based on historical gradient information by a random process, we make the following assumption on $H_k (k\ge 2)$ to control the randomness (note that $H_1$ is given in the initialization step).
\begin{itemize}
\item[{\bf AS.4}]\quad For any $k\ge 2$, the random variable $H_k$ depends only on $\xi_{[k-1]}$.
\end{itemize}
It then follows directly from {\bf AS.4} and \eqref{Exp-G-k} that
\be\label{as-4-equality}
\E[H_k g_k|\xi_{[k-1]}] = H_k\nabla f(x_k),
\ee
where the expectation is taken with respect to $\xi_k$ generated in the computation of $g_k$.

We will not specify how to compute $H_k$ until Section \ref{sec:LBFGS}, where a specific updating scheme for $H_k$ satisfying both assumptions {\bf AS.3} and {\bf AS.4} will be proposed.

We now present our SQN method for solving \eqref{orig-prob} as Algorithm \ref{sso-uncons}.
\begin{algorithm}[ht]
\caption{{\bf SQN: Stochastic quasi-Newton method for nonconvex stochastic optimization}}
\label{sso-uncons}
\begin{algorithmic}[1]
\REQUIRE {Given $x_1\in\R^n$, a positive definite matrix $H_1\in\R^{n\times n}$, batch sizes $\{m_k\}_{k\ge 1}$,and stepsizes $\{\alpha_k\}_{k\ge 1}$
}
\FOR {$k=1,2,\ldots$}
    \STATE    Calculate
        $
            g_k = \frac{1}{m_k}\sum_{i=1}^{m_k}g(x_k,\xi_{k,i}).
        $
    \STATE Generate a positive definite Hessian inverse approximation $H_{k}$.
    \STATE Calculate
        $
        x_{k+1} = x_k - \alpha_k H_k g_k.
        $
\ENDFOR
\end{algorithmic}
\end{algorithm}

\subsection{Convergence and complexity of SQN with diminishing step size}\label{sec:RSSA-1}
In this subsection, we analyze the convergence and complexity of SQN under the condition that the step size $\alpha_k$ in \eqref{x-k-1} is diminishing. Specifically, in this subsection we assume $\alpha_k$ satisfies the following condition:
\be\label{alpha-inf}
   \sum_{k=1}^{+\infty}\alpha_k = +\infty, \qquad \sum_{k=1}^{+\infty}\alpha_k^2 < +\infty,
\ee
which is a standard assumption in stochastic approximation algorithms (see, e.g., \cite{BHNS-14,mr14-2,njls09}). One very simple choice of $\alpha_k$ that satisfies \eqref{alpha-inf} is $\alpha_k=O(1/k)$.

The following lemma shows that a descent property in terms of the expected objective value holds for SQN. Our analysis is similar to analyses that have been used in \cite{B98,mr10}.
\begin{lem}\label{lem3.1}
Suppose that $\{x_k\}$ is generated by SQN and assumptions {\bf AS.1-4} hold. Further assume that \eqref{alpha-inf} holds, and $\alpha_k \leq \frac{\underline{\kappa}}{L\bar{\kappa}^2}$ for all $k$. (Note that this can be satisfied if $\alpha_k$ is non-increasing and the initial step size $\alpha_1 \leq \frac{\underline{\kappa}}{L\bar{\kappa}^2}$). Then the following inequality holds
\be \label{exp-red}
\E[f(x_{k+1})|x_k]\le  f(x_k) - \frac{1}{2}\alpha_k \underline{\kappa}  \|\nabla f(x_k)\|^2  + \frac{L\sigma^2 \bar{\kappa}^2}{2m_k}\alpha_k^2, \quad \forall k\geq 1,
\ee
where the conditional expectation is taken with respect to $\xi_k$.
\end{lem}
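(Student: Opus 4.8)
The plan is to begin from the standard descent inequality guaranteed by the Lipschitz continuity of $\nabla f$ in \textbf{AS.1}, namely $f(y)\le f(x)+\nabla f(x)^\top(y-x)+\tfrac{L}{2}\|y-x\|^2$. Applying this with $y=x_{k+1}$, $x=x_k$ and substituting the update rule \eqref{x-k-1}, $x_{k+1}-x_k=-\alpha_k H_k g_k$, gives
\[
f(x_{k+1}) \le f(x_k) - \alpha_k \nabla f(x_k)^\top H_k g_k + \frac{L}{2}\alpha_k^2 \|H_k g_k\|^2 .
\]
The remainder of the proof consists of taking the conditional expectation with respect to $\xi_k$ (equivalently, conditioning on $\xi_{[k-1]}$, which fixes both $x_k$ and, by \textbf{AS.4}, the matrix $H_k$) and then bounding the two resulting expectation terms.

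For the first-order term I would use that $H_k$ is deterministic under this conditioning together with the unbiasedness relation $\E[g_k|x_k]=\nabla f(x_k)$ from \eqref{Exp-G-k} (cf. \eqref{as-4-equality}), obtaining $\E[\nabla f(x_k)^\top H_k g_k\,|\,x_k]=\nabla f(x_k)^\top H_k\nabla f(x_k)\ge \underline{\kappa}\|\nabla f(x_k)\|^2$ by the lower bound in \textbf{AS.3}. For the second-order term the key step is the bias--variance split
\[
\|H_k g_k\|^2 = \|H_k\nabla f(x_k)\|^2 + 2\big(H_k\nabla f(x_k)\big)^\top H_k\big(g_k-\nabla f(x_k)\big) + \|H_k(g_k-\nabla f(x_k))\|^2 ,
\]
whose cross term vanishes in conditional expectation because $H_k$ is deterministic given $\xi_{[k-1]}$ and $\E[g_k-\nabla f(x_k)\,|\,x_k]=0$. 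Using the operator-norm bound $\|H_k\|\le\bar{\kappa}$ implied by \textbf{AS.3} on the first piece and the variance bound $\E[\|g_k-\nabla f(x_k)\|^2|x_k]\le \sigma^2/m_k$ from \eqref{Exp-G-k} on the last piece yields $\E[\|H_k g_k\|^2|x_k]\le \bar{\kappa}^2\|\nabla f(x_k)\|^2+\bar{\kappa}^2\sigma^2/m_k$.

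Combining the two bounds gives
\[
\E[f(x_{k+1})|x_k] \le f(x_k) - \alpha_k\Big(\underline{\kappa}-\tfrac{L}{2}\alpha_k\bar{\kappa}^2\Big)\|\nabla f(x_k)\|^2 + \frac{L\sigma^2\bar{\kappa}^2}{2m_k}\alpha_k^2 .
\]
The final step is to invoke the step-size restriction $\alpha_k\le \underline{\kappa}/(L\bar{\kappa}^2)$, which forces $\tfrac{L}{2}\alpha_k\bar{\kappa}^2\le \underline{\kappa}/2$ and hence $\underline{\kappa}-\tfrac{L}{2}\alpha_k\bar{\kappa}^2\ge \underline{\kappa}/2$, producing exactly the claimed inequality \eqref{exp-red}. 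I expect the main subtlety to be the correct handling of the conditioning — making precise that $H_k$ may be treated as a constant so that it factors out of the first-order term and kills the cross term in the bias--variance decomposition. This is precisely the role of \textbf{AS.4}; the eigenvalue bounds of \textbf{AS.3} and the variance control of \eqref{Exp-G-k} then make the remaining estimates routine.
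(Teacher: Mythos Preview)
Your proof is correct and follows essentially the same route as the paper's: the descent lemma from \textbf{AS.1}, conditional expectation using \textbf{AS.4} to treat $H_k$ as deterministic, the eigenvalue bounds from \textbf{AS.3}, the variance bound \eqref{Exp-G-k}, and finally the step-size condition. The only cosmetic difference is that the paper first bounds $\|H_k g_k\|^2\le \bar{\kappa}^2\|g_k\|^2$ and then applies the bias--variance split to $\|g_k\|^2$, whereas you split $\|H_k g_k\|^2$ directly and apply $\|H_k\|\le\bar{\kappa}$ to each piece; both orderings yield the identical intermediate inequality \eqref{lemma2.1-proof-1}.
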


\begin{proof}
Define $\delta_k = g_k - \nabla f(x_k)$. From \eqref{x-k-1}, and assumptions {\bf AS.1} and {\bf AS.3}, we have
\begin{align}
f(x_{k+1})
 \le & \,f(x_k) + \langle \nabla f(x_k), x_{k+1} - x_k \rangle + \frac{L}{2}\|x_{k+1}-x_{k}\|^2  \notag \\
 = & \,f(x_k) -\alpha_k\langle \nabla f(x_k),  H_kg_k \rangle + \frac{L}{2}\alpha_k^2\| H_kg_k\|^2  \notag \\
 \leq &\, f(x_k)  -\alpha_k\langle \nabla f(x_k),  H_k\nabla f(x_k)\rangle - \alpha_k\langle \nabla f(x_k),  H_k\delta_k\rangle + \frac{L}{2}\alpha_k^2 \bar{\kappa}^2\|g_k\|^2. \label{red-ori}
\end{align}
Taking expectation with respect to $\xi_k$ on both sides of \eqref{red-ori} conditioned on $x_k$, we obtain,
\be \label{red}
\E[f(x_{k+1})|x_k]\le  f(x_k) -\alpha_k\langle \nabla f(x_k), H_k\nabla f(x_k) \rangle + \frac{L}{2}\alpha_k^2 \bar{\kappa}^2\E[\|g_k\|^2|x_k],
\ee
where we used \eqref{as-4-equality} and the fact that $\E[\delta_k|x_k]=0$.
From \eqref{Exp-G-k} and $\E[\delta_k|x_k]=0$, it follows that
\begin{align}
\E[\|g_k\|^2|x_k]& =  \E[\|g_k-\nabla f(x_k) + \nabla f(x_k)\|^2|x_k] \notag \\
                 & =  \E[\|\nabla f(x_k)\|^2|x_k] + \E[\|g_k - \nabla f(x_k)\|^2|x_k] +  2\E[\langle\delta_k,\nabla f(x_k)\rangle |x_k] \notag \\
                 & = \|\nabla f(x_k)\|^2 + \E[\|g_k - \nabla f(x_k)\|^2|x_k]  \le \|\nabla f(x_k)\|^2 +  \sigma^2/m_k, \label{exp-G2}
\end{align}
which together with \eqref{red} and {\bf AS.3} yields that
\be\label{lemma2.1-proof-1}
\E[f(x_{k+1})|x_k]\le  f(x_k) - \left(\alpha_k \underline{\kappa} - \frac{L}{2}\alpha_k^2 \bar{\kappa}^2\right) \|\nabla f(x_k)\|^2  + \frac{L\sigma^2 \bar{\kappa}^2}{2m_k}\alpha_k^2.
\ee
Then \eqref{lemma2.1-proof-1} combined with the assumption $\alpha_k \leq \frac{\underline{\kappa}}{LC_u^2}$ implies \eqref{exp-red}.
\end{proof}

Before proceeding further, we introduce the definition of a {\it supermartingale} (see \cite{Durrett-10} for more details).
\begin{defi}\label{def2.1}
Let $\{\mathcal{F}_k\}$ be an increasing sequence of $\sigma$-algebras. If $\{X_k\}$ is a stochastic process satisfying
(i) $\E[|X_k|]<\infty$; (ii) $X_k\in\mathcal{F}_k$ for all $k$; and (iii) $\E[X_{k+1}|\mathcal{F}_k]\le X_k$ for all $k$,
then $\{X_k\}$ is called a supermartingale.
\end{defi}

\begin{prop}[see, e.g., Theorem 5.2.9 in \cite{Durrett-10}]\label{prop2.1}
If $\{X_k\}$ is a nonnegative supermartingale, then $\lim_{k\to\infty}X_k\to X$ almost surely and $\E[X]\le\E[X_0]$.
\end{prop}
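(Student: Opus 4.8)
The plan is to prove this as the special case of Doob's martingale convergence theorem for \emph{nonnegative} supermartingales, which is exactly Theorem 5.2.9 in the cited reference. The two conclusions are handled by two distinct classical tools: almost sure convergence comes from Doob's upcrossing inequality, and the expectation bound comes from Fatou's lemma. First I would record the elementary consequence of the supermartingale property that $\E[X_{k+1}]=\E[\E[X_{k+1}\mid\mathcal{F}_k]]\le\E[X_k]$, so the sequence $\{\E[X_k]\}$ is non-increasing and, using $X_k\ge 0$, $\E[|X_k|]=\E[X_k]\le\E[X_0]<\infty$ for every $k$. This uniform $L^1$ bound is what drives the rest of the argument.

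The heart of the matter is the upcrossing inequality. For fixed reals $a<b$, I would define the number of upcrossings $U_n[a,b]$ of the interval $[a,b]$ made by $X_0,\dots,X_n$, together with the predictable ``buy-low/sell-high'' indicator process $H_k\in\{0,1\}$ that switches on as soon as $X$ drops below $a$ and off as soon as it rises above $b$. Writing the discrete stochastic integral $(H\cdot X)_n=\sum_{k=1}^n H_k(X_k-X_{k-1})$ and using that $H_k$ is $\mathcal{F}_{k-1}$-measurable together with $\E[X_k-X_{k-1}\mid\mathcal{F}_{k-1}]\le 0$, one obtains $\E[(H\cdot X)_n]\le 0$. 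On the other hand, each completed upcrossing contributes at least $b-a$ to $(H\cdot X)_n$, while the final incomplete excursion costs at most $(X_n-a)^-$, so that $(b-a)U_n[a,b]\le (H\cdot X)_n+(X_n-a)^-$. Taking expectations gives $(b-a)\E[U_n[a,b]]\le\E[(X_n-a)^-]\le |a|$, where the last bound uses $X_n\ge 0$. Letting $n\to\infty$ by monotone convergence shows the expected total number of upcrossings $\E[U_\infty[a,b]]$ is finite, hence $U_\infty[a,b]<\infty$ almost surely.

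Taking a countable intersection over all rational pairs $a<b$, I would conclude that the event $\{\liminf_k X_k<\limsup_k X_k\}$, which is contained in $\bigcup_{a<b\ \mathrm{rational}}\{U_\infty[a,b]=\infty\}$, has probability zero. Therefore $\lim_{k\to\infty}X_k=:X$ exists almost surely in $[0,+\infty]$. Both the finiteness of $X$ and the final inequality then follow from Fatou's lemma applied to the nonnegative sequence $\{X_k\}$: $\E[X]=\E[\liminf_k X_k]\le\liminf_k\E[X_k]\le\E[X_0]<\infty$, which simultaneously forces $X<\infty$ almost surely and delivers $\E[X]\le\E[X_0]$.

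I expect the main obstacle to be the upcrossing inequality, specifically verifying that the betting process $H$ is genuinely predictable (each $H_k$ measurable with respect to $\mathcal{F}_{k-1}$) so that the supermartingale property can be invoked term by term to yield $\E[(H\cdot X)_n]\le 0$; the combinatorial bookkeeping relating $(H\cdot X)_n$ to $U_n[a,b]$ is routine once the stopping-time structure of the excursions is set up carefully. Since the result is standard and the paper cites Durrett, I would in practice simply invoke the reference rather than reproduce this argument.
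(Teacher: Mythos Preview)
Your proposal is correct and reproduces the standard textbook proof of the supermartingale convergence theorem via Doob's upcrossing inequality and Fatou's lemma. The paper itself supplies no proof of this proposition at all: it is stated as a citation of Theorem 5.2.9 in Durrett and then simply invoked in the proof of Theorem \ref{thm3.1}. You anticipated this in your final sentence, and that is exactly what the authors do.
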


We are now ready to give convergence results for SQN (Algorithm \ref{sso-uncons}).
\begin{thm}\label{thm3.1}
Suppose that assumptions {\bf AS.1-4} hold for $\{x_k\}$ generated by SQN with batch size $m_k = {m}$ for all $k$. If the stepsize $\alpha_k$ satisfies \eqref{alpha-inf} and $\alpha_k \leq \frac{\underline{\kappa}}{L\bar{\kappa}^2}$ for all $k$, then it holds that
\be \label{liminf_g}
\liminf_{k\to\infty}\, \|\nabla f(x_k)\| = 0, \quad \mbox{ with probability 1}.
\ee
Moreover, there exists a positive constant $M_f$ such that
\be\label{M_f} \E[f(x_k)] \leq M_f, \quad \forall k. \ee
\end{thm}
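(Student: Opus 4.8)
The plan is to establish the two assertions via a supermartingale argument built on the descent inequality \eqref{exp-red} from Lemma \ref{lem3.1}. The main device is to convert \eqref{exp-red} into a statement about a suitably shifted, nonnegative stochastic process to which Proposition \ref{prop2.1} applies.

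First I would construct the supermartingale. Starting from \eqref{exp-red} with $m_k=m$, I would drop the nonpositive term $-\frac{1}{2}\alpha_k\underline{\kappa}\|\nabla f(x_k)\|^2$ to obtain the crude bound
\be
\E[f(x_{k+1})|x_k]\le f(x_k)+\frac{L\sigma^2\bar{\kappa}^2}{2m}\alpha_k^2.
\ee
Define $\gamma_k=\sum_{j=k}^{\infty}\frac{L\sigma^2\bar{\kappa}^2}{2m}\alpha_j^2$, which is finite by \eqref{alpha-inf}, and set $X_k=f(x_k)-f^{low}+\gamma_k$. Since $f(x_k)\ge f^{low}$ by {\bf AS.1} and $\gamma_k\ge 0$, the process $X_k$ is nonnegative. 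Because $\gamma_{k+1}=\gamma_k-\frac{L\sigma^2\bar{\kappa}^2}{2m}\alpha_k^2$, taking conditional expectation gives $\E[X_{k+1}|\mathcal{F}_k]\le X_k$, where $\mathcal{F}_k=\sigma(\xi_{[k-1]})$, so $\{X_k\}$ is a nonnegative supermartingale. By Proposition \ref{prop2.1}, $X_k$ converges almost surely to some integrable limit $X$, and since $\gamma_k\to 0$, this yields that $f(x_k)$ converges almost surely; in particular $\E[f(x_k)]\le \E[X_1]+f^{low}=:M_f$ is bounded for all $k$, which proves \eqref{M_f}.

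For \eqref{liminf_g} I would return to \eqref{exp-red} \emph{without} discarding the gradient term. Taking total expectations and summing over $k=1,\ldots,N$, the telescoping of $\E[f(x_{k+1})]-\E[f(x_k)]$ together with the lower bound $f^{low}$ gives
\be
\frac{1}{2}\underline{\kappa}\sum_{k=1}^{N}\alpha_k\,\E[\|\nabla f(x_k)\|^2]\le f(x_1)-f^{low}+\frac{L\sigma^2\bar{\kappa}^2}{2m}\sum_{k=1}^{N}\alpha_k^2.
\ee
Letting $N\to\infty$, the right-hand side is finite by \eqref{alpha-inf}, so $\sum_{k=1}^{\infty}\alpha_k\,\E[\|\nabla f(x_k)\|^2]<\infty$. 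Since $\sum_k\alpha_k=\infty$, this forces $\liminf_{k\to\infty}\E[\|\nabla f(x_k)\|^2]=0$; otherwise a positive lower bound on $\E[\|\nabla f(x_k)\|^2]$ for large $k$ would make the weighted sum diverge.

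The delicate step is upgrading the convergence-in-expectation conclusion $\liminf_k\E[\|\nabla f(x_k)\|^2]=0$ to the almost-sure statement \eqref{liminf_g}, which is what the theorem actually asserts. I expect this to be the main obstacle. The clean route is to work with the almost-sure summability directly: by combining the a.s. convergence of $f(x_k)$ established above with a second supermartingale that tracks $\sum_k \alpha_k\|\nabla f(x_k)\|^2$, one can invoke the Robbins--Siegmund almost-supermartingale convergence theorem to conclude that $\sum_{k=1}^{\infty}\alpha_k\|\nabla f(x_k)\|^2<\infty$ holds with probability $1$. Paired with $\sum_k\alpha_k=\infty$, this yields $\liminf_k\|\nabla f(x_k)\|=0$ almost surely. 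Care is needed in verifying the measurability and integrability hypotheses and in handling the filtration, since $H_k$ is itself random and only $\mathcal{F}_{k}$-measurable via {\bf AS.4}; the identity \eqref{as-4-equality} is exactly what makes the conditional expectations collapse correctly.
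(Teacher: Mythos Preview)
Your argument is essentially the same as the paper's, and it is correct; the supermartingale you build is (after a trivial relabelling) exactly the process $\gamma_k-f^{low}$ that the paper uses, and the bound \eqref{M_f} drops out in the same way.

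The one place where you work harder than necessary is the passage from $\sum_{k}\alpha_k\,\E[\|\nabla f(x_k)\|^2]<\infty$ to the almost-sure statement. You flag this as ``the delicate step'' and reach for the Robbins--Siegmund lemma, but no such machinery is needed: since each term $\alpha_k\|\nabla f(x_k)\|^2$ is nonnegative, Tonelli gives
\[
\E\!\left[\sum_{k=1}^{\infty}\alpha_k\|\nabla f(x_k)\|^2\right]=\sum_{k=1}^{\infty}\alpha_k\,\E[\|\nabla f(x_k)\|^2]<\infty,
\]
and a nonnegative random variable with finite expectation is finite almost surely. This is precisely what the paper does: it keeps the $-\beta_k$ term in the supermartingale inequality, sums $\E[\beta_k]\le\E[\gamma_k]-\E[\gamma_{k+1}]$, and concludes $\sum_k\beta_k<\infty$ a.s.\ directly. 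Combined with $\sum_k\alpha_k=\infty$ this yields $\liminf_k\|\nabla f(x_k)\|=0$ a.s. Your Robbins--Siegmund route would also succeed, but it obscures the elementary reason the upgrade to almost-sure convergence is free here.
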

\begin{proof}
Define $\beta_k := \frac{\alpha_k \underline{\kappa}}{2}\|\nabla f(x_k)\|^2$ and
$\gamma_k := f(x_k)+ \frac{L\sigma^2 \bar{\kappa}^2}{2m}\sum_{i=k}^\infty \alpha_i^2$.
Let $\C{F}_k$ be the $\sigma$-algebra measuring $\beta_k$, $\gamma_k$ and $x_k$. From \eqref{exp-red} we know that for any $k$, it holds that
\begin{align}
\E[\gamma_{k+1}|\C{F}_k] & = \E[f(x_{k+1})|\C{F}_k] + \frac{L\sigma^2 \bar{\kappa}^2}{2m}\sum_{i=k+1}^\infty \alpha_i^2 \notag \\
                         & \le f(x_k) -\frac{\alpha_k \underline{\kappa}}{2}\|\nabla f(x_k)\|^2 + \frac{L\sigma^2 \bar{\kappa}^2}{2m}\sum_{i=k}^\infty \alpha_i^2  =\gamma_k - \beta_k, \label{diff}
\end{align}
which implies that
$
\E[\gamma_{k+1}-f^{low}|\C{F}_k] \le \gamma_k - f^{low} - \beta_k.
$
Since $\beta_k\ge0$, we have $0\le\E[\gamma_k-f^{low}]\le \gamma_1-f^{low}<+\infty$, which implies \eqref{M_f}. According to Definition \ref{def2.1}, $\{\gamma_k-f^{low}\}$ is a supermartingale. Therefore, Proposition \ref{prop2.1} shows that there exists a $\gamma$ such that $\lim_{k\to\infty}\gamma_k=\gamma$ with probability 1, and $\E[\gamma]\le \E[\gamma_1]$.
Note that from \eqref{diff} we have $\E[\beta_k] \le \E[\gamma_k]-\E[\gamma_{k+1}]$. Thus,
\[
\E[\sum_{k=1}^{\infty}\beta_k] \le \sum_{k=1}^{\infty}(\E[\gamma_{k}]-\E[\gamma_{k+1}]) < +\infty,
\]
which further yields that
\be\label{alpha-g}
\sum_{k=1}^{\infty}\beta_k = \frac{\underline{\kappa}}{2}\sum_{k=1}^\infty \alpha_k\|\nabla f(x_k)\|^2 < +\infty \quad \mbox{  with probability 1}.
\ee
Since $\sum_{k=1}^{\infty}\alpha_k =+\infty$, it follows that \eqref{liminf_g} holds.
\end{proof}

Under the assumption \eqref{grad-bound} used in \cite{mr10,BHNS-14,mr14-2}, we now prove a stronger convergence result showing that any limit point of $\{x_k\}$ generated by SQN is a stationary point of \eqref{orig-prob} with probability 1.
\begin{thm}\label{thm2.2}
Assume the same assumptions hold as in Theorem \ref{thm3.1}, and that \eqref{grad-bound} holds. Then
\be \label{lim_g}
\lim_{k\to\infty}\, \|\nabla f(x_k)\| = 0, \quad \mbox{ with probability 1}.
\ee
\end{thm}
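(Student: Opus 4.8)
The plan is a proof by contradiction that upgrades the $\liminf$ statement \eqref{liminf_g} of Theorem \ref{thm3.1} to the full limit \eqref{lim_g}. I would retain from the proof of Theorem \ref{thm3.1} two facts that hold almost surely: $\liminf_{k\to\infty}\|\nabla f(x_k)\|=0$, and the summability $\sum_{k=1}^\infty \alpha_k\|\nabla f(x_k)\|^2<+\infty$ established in \eqref{alpha-g}. Assume, for contradiction, that $\limsup_{k\to\infty}\|\nabla f(x_k)\|>0$ on a set of positive probability. Since $\liminf=0$ almost surely, I can fix rational constants $0<\epsilon_1<\epsilon_2$ for which the event $E$ that $\|\nabla f(x_k)\|<\epsilon_1$ infinitely often \emph{and} $\|\nabla f(x_k)\|>\epsilon_2$ infinitely often has positive probability. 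On $E$ the trajectory makes infinitely many up-crossings, so for each $i$ I would take $n_i$ with $\|\nabla f(x_{n_i})\|>\epsilon_2$ and let $m_i<n_i$ be the last index before $n_i$ with $\|\nabla f(x_{m_i})\|<\epsilon_1$; then $\|\nabla f(x_k)\|\ge\epsilon_1$ for all $m_i<k\le n_i$, and $m_i,n_i\to\infty$ as $i\to\infty$.

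The first step is to show that the total step length over each crossing interval vanishes. Because the indices $m_i+1,\dots,n_i$ lie in the set $K:=\{k:\|\nabla f(x_k)\|\ge\epsilon_1\}$, and $\sum_{k\in K}\alpha_k\le \epsilon_1^{-2}\sum_k\alpha_k\|\nabla f(x_k)\|^2<\infty$, the tail of this series over $K$ goes to zero; together with $\alpha_{m_i}\to0$ (a consequence of $\sum_k\alpha_k^2<\infty$) this gives $\sum_{k=m_i}^{n_i-1}\alpha_k\to0$. I would then bound the displacement across the interval by splitting $g_k=\nabla f(x_k)+\delta_k$ with $\delta_k:=g_k-\nabla f(x_k)$:
\[
\|x_{n_i}-x_{m_i}\|\le \bar{\kappa}\sum_{k=m_i}^{n_i-1}\alpha_k\|\nabla f(x_k)\| + \Bigl\|\sum_{k=m_i}^{n_i-1}\alpha_k H_k\delta_k\Bigr\|.
\]
For the deterministic term I would invoke \eqref{grad-bound}, which yields the uniform bound $\|\nabla f(x_k)\|=\|\E[g_k\mid x_k]\|\le(\E[\|g_k\|^2\mid x_k])^{1/2}\le\sqrt{M_g}$, so this term is at most $\bar{\kappa}\sqrt{M_g}\sum_{k=m_i}^{n_i-1}\alpha_k\to0$.

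The main obstacle, and the reason a naive term-by-term bound fails, is the stochastic term: since $\E[\sum_k\alpha_k\|\delta_k\|]$ diverges, one cannot control $\sum_{k=m_i}^{n_i-1}\alpha_k\|\delta_k\|$ path by path over these random intervals. The resolution is to keep the noise as a vector-valued martingale instead of taking norms inside the sum. With the filtration $\mathcal{F}_k=\sigma(\xi_{[k]})$, assumption {\bf AS.4} makes $H_k$ and $x_k$ measurable with respect to $\mathcal{F}_{k-1}$, and {\bf AS.2}(a) (equivalently \eqref{as-4-equality}) gives $\E[\delta_k\mid\mathcal{F}_{k-1}]=0$, so $W_j:=\sum_{k=1}^j\alpha_k H_k\delta_k$ is a martingale. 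By {\bf AS.2}(b) and {\bf AS.3}, $\sum_k\E\|\alpha_k H_k\delta_k\|^2\le \bar{\kappa}^2(\sigma^2/m)\sum_k\alpha_k^2<\infty$, so $\{W_j\}$ is $L^2$-bounded and hence converges almost surely; in particular it is almost surely Cauchy, which forces $\bigl\|\sum_{k=m_i}^{n_i-1}\alpha_k H_k\delta_k\bigr\|=\|W_{n_i-1}-W_{m_i-1}\|\to0$ along the random sequences $m_i,n_i\to\infty$.

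Combining the two estimates gives $\|x_{n_i}-x_{m_i}\|\to0$, and hence, by the Lipschitz continuity of $\nabla f$ in {\bf AS.1},
\[
\epsilon_2-\epsilon_1\le \|\nabla f(x_{n_i})\|-\|\nabla f(x_{m_i})\|\le \|\nabla f(x_{n_i})-\nabla f(x_{m_i})\|\le L\|x_{n_i}-x_{m_i}\|\to0,
\]
which contradicts $\epsilon_2>\epsilon_1$. Therefore $P(E)=0$ for every rational pair $\epsilon_1<\epsilon_2$, and a countable union shows $\limsup_{k\to\infty}\|\nabla f(x_k)\|=0$ almost surely, i.e.\ \eqref{lim_g}. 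I expect the delicate point to be exactly the treatment of the noise over random intervals through the almost-sure Cauchy property of $\{W_j\}$; the remainder is bookkeeping with the crossing times and the two convergent series inherited from Theorem \ref{thm3.1}.
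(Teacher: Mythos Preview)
Your proof is correct and takes a genuinely different route from the paper. The paper bounds the step length directly by $\E[\|x_{k+1}-x_k\|\mid x_k]\le \alpha_k\bar{\kappa}(M_g/m)^{1/2}$, using \eqref{grad-bound} to control $\|g_k\|$ itself rather than splitting $g_k=\nabla f(x_k)+\delta_k$; it then sums this over the crossing interval and, together with \eqref{m_n_i}, concludes $\|x_{n_i}-x_{m_i}\|\to0$ with probability~1. Your approach instead separates the drift from the noise and handles the noise via the almost-sure convergence of the $L^2$-bounded martingale $W_j=\sum_{k\le j}\alpha_kH_k\delta_k$. What this buys you is a fully pathwise argument: the paper's passage from the expectation bound $\E[\|x_{n_i}-x_{m_i}\|]\le\bar{\kappa}(M_g/m)^{1/2}\sum_{k=m_i}^{n_i-1}\alpha_k$ to the almost-sure conclusion is delicate, since the crossing indices $m_i,n_i$ are random and $L^1$ convergence does not by itself yield almost-sure convergence, whereas your martingale-Cauchy argument sidesteps this issue entirely. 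The paper's version is shorter but less careful on precisely the point you identified as the main obstacle.
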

\begin{proof}
For any given $\epsilon>0$, according to \eqref{liminf_g}, there exist infinitely many iterates $x_k$ such that $\|\nabla f(x_k)\|< \epsilon$. 
Then if \eqref{lim_g} does not hold, there must exist two infinite sequences of indices $\{m_i\}$, $\{n_i\}$ with $n_i > m_i$, such that for $i=1,2,\ldots,$
\be\label{x_m_n}
\|\nabla f(x_{m_i})\| \ge 2\epsilon, \quad \|\nabla f(x_{n_i})\|<\epsilon, \quad  \|\nabla f(x_k)\|\ge \epsilon, \quad k=m_i+1,\ldots,n_i-1.
\ee
Then from \eqref{alpha-g} it follows that
\begin{align*}
+\infty >\, \sum_{k=1}^{+\infty} \alpha_k\|\nabla f(x_k)\|^2
        \ge\, \sum_{i=1}^{+\infty}\sum_{k=m_i}^{n_i-1} \alpha_k \|\nabla f(x_k)\|^2
        \ge\,  \epsilon^2 \sum_{i=1}^{+\infty}\sum_{k=m_i}^{n_i-1} \alpha_k, \quad \mbox{with probability 1,}
\end{align*}
which implies that
\be\label{m_n_i}
\sum_{k=m_i}^{n_i-1} \alpha_k \to 0, \mbox{ with probability 1,
}\quad\mbox{ as } i\to +\infty.
\ee
According to \eqref{exp-G2}, we have that
\begin{equation}\label{thm2.2-proof-1}
\E[\|x_{k+1}-x_k\||x_k] = \alpha_k \E[\|H_k g_k\||x_k] \le \alpha_k \bar{\kappa}\E[\|g_k\||x_k] \le \alpha_k \bar{\kappa} (\E[\|g_k\|^2|x_k])^{\frac{1}{2}} \le \alpha_k \bar{\kappa} (M_g/m)^{\frac{1}{2}},
\end{equation}
where the last inequality is due to \eqref{grad-bound} and the convexity of $\|\cdot\|^2$.
Then it follows from \eqref{thm2.2-proof-1} that
\[
\E[\|x_{n_i} - x_{m_i}\|] \le \bar{\kappa} (M_g/m)^{\frac{1}{2}}\sum_{k=m_i}^{n_i-1} \alpha_k,
\]
which together with \eqref{m_n_i} implies that $\|x_{n_i} - x_{m_i}\|\to 0$ with probability 1, as $i\to +\infty$. Hence, from the Lipschitz continuity of $\nabla f$, it follows that $\|\nabla f(x_{n_i})-\nabla f(x_{m_i})\|\to 0$ with probability 1 as $i\to+\infty$. However, this contradicts \eqref{x_m_n}. Therefore, the assumption that \eqref{lim_g} does not hold is not true.
\end{proof}

\begin{rem}
Note that our result in Theorem \ref{thm2.2} is stronger than the ones given in existing works such as \cite{mr10} and \cite{mr14-2}.
Moreover, although Bottou \cite{B98} also proves that the SA method for nonconvex stochastic optimization with diminishing stepsize is almost surely convergent to stationary point, our analysis requires weaker assumptions. For example, \cite{B98} assumes that the objective function is three times continuously differentiable, while our analysis does not require this. Furthermore, we are able to analyze the iteration complexity of SQN, for a specifically chosen step size $\alpha_k$ (see Theorem \ref{thm2.3} below), which is not provided in \cite{B98}.
\end{rem}

We now analyze the iteration complexity of SQN. 

\begin{thm}\label{thm2.3}
Suppose that assumptions {\bf AS.1-4} hold for $\{x_k\}$ generated by SQN with batch size $m_k = {m}$ for all $k$. We also assume that $\alpha_k$ is specifically chosen as
\be\label{alpha-spec}
\alpha_k = \frac{\underline{\kappa}}{L \bar{\kappa}^2}k^{-\beta}
\ee
with $\beta\in(0.5,1)$. Note that this choice satisfies \eqref{alpha-inf} and $\alpha_k \leq \frac{\underline{\kappa}}{L \bar{\kappa}^2}$ for all $k$.
Then
\be\label{conv-rate}
\frac{1}{N}\sum_{k=1}^N \E[\|\nabla f(x_k)\|^2] \le  \frac{2L(M_f-f^{low}) \bar{\kappa}^2}{\underline{\kappa}^2}N^{\beta-1} + \frac{\sigma^2 }{(1-\beta)m}(N^{-\beta}-N^{-1}),
\ee
where $N$ denotes the iteration number.
Moreover, for a given $\epsilon\in(0,1)$, to guarantee that $\frac{1}{N}\sum_{k=1}^N \E[\|\nabla f(x_k)\|^2] < \epsilon$, the number of iterations $N$ needed is at most
$O\left(\epsilon^{-\frac{1}{1-\beta}}\right)$.
\end{thm}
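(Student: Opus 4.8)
The plan is to convert the one-step descent estimate of Lemma \ref{lem3.1} into a bound on the running average of $\E[\|\nabla f(x_k)\|^2]$. First I would take total expectation in \eqref{exp-red} via the tower property over the filtration generated by $\xi_{[k]}$, specializing to $m_k=m$, to obtain for every $k\ge 1$
\[
\tfrac12\underline{\kappa}\,\alpha_k\,\E[\|\nabla f(x_k)\|^2]\le \E[f(x_k)]-\E[f(x_{k+1})]+\frac{L\sigma^2\bar{\kappa}^2}{2m}\alpha_k^2 .
\]
Dividing by $\alpha_k$ and inserting the explicit choice \eqref{alpha-spec}, for which $1/\alpha_k=\tfrac{L\bar{\kappa}^2}{\underline{\kappa}}k^{\beta}$ and $\alpha_k=\tfrac{\underline{\kappa}}{L\bar{\kappa}^2}k^{-\beta}$, this rearranges to
\[
\E[\|\nabla f(x_k)\|^2]\le \frac{2L\bar{\kappa}^2}{\underline{\kappa}^2}\,k^{\beta}\bigl(\E[f(x_k)]-\E[f(x_{k+1})]\bigr)+\frac{\sigma^2}{m}\,k^{-\beta}.
\]

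Summing from $k=1$ to $N$ and dividing by $N$ is designed to produce exactly the two terms of \eqref{conv-rate}, once the two resulting sums are controlled. For the noise sum, comparing with the integral $\int_1^N x^{-\beta}\,dx=\tfrac{1}{1-\beta}(N^{1-\beta}-1)$ (finite since $\beta<1$) bounds $\sum_{k=1}^N k^{-\beta}$ to leading order and, after division by $N$, reproduces the $\frac{\sigma^2}{(1-\beta)m}(N^{-\beta}-N^{-1})$ term. The function-value sum is $\sum_{k=1}^N k^{\beta}\bigl(\E[f(x_k)]-\E[f(x_{k+1})]\bigr)$, which I would handle by summation by parts: writing $a_k:=\E[f(x_k)]$, Abel summation gives $\sum_{k=1}^N k^{\beta}(a_k-a_{k+1})=a_1+\sum_{k=2}^N\bigl(k^{\beta}-(k-1)^{\beta}\bigr)a_k-N^{\beta}a_{N+1}$. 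Bounding every $a_k\le M_f$ (Theorem \ref{thm3.1}) in the positive-weight terms and $a_{N+1}\ge f^{low}$ (AS.1) collapses the telescoping weights to $N^{\beta}(M_f-f^{low})$; multiplying by $\tfrac{2L\bar{\kappa}^2}{\underline{\kappa}^2 N}$ then yields the first term $\tfrac{2L(M_f-f^{low})\bar{\kappa}^2}{\underline{\kappa}^2}N^{\beta-1}$.

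I expect the summation-by-parts step to be the main obstacle. Because the per-iteration changes $\E[f(x_k)]-\E[f(x_{k+1})]$ are not guaranteed to be nonnegative in the stochastic setting, one cannot simply replace the increasing factor $k^{\beta}$ by its maximum $N^{\beta}$ inside the telescoping sum, so the naive bound is invalid; it is precisely the uniform moment bound $\E[f(x_k)]\le M_f$ of Theorem \ref{thm3.1} that makes the Abel estimate go through and keeps the leading term at order $N^{\beta-1}$ rather than a larger one.

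Finally, for the complexity claim I would compare the decay rates of the two terms in \eqref{conv-rate}. Since $\beta\in(0.5,1)$ forces $1-\beta<\beta$, the first term of order $N^{\beta-1}$ dominates the noise term of order $N^{-\beta}$. Requiring the dominant term to fall below $\epsilon$, i.e. $\tfrac{2L(M_f-f^{low})\bar{\kappa}^2}{\underline{\kappa}^2}N^{\beta-1}\le\epsilon$, and solving for $N$ gives $N=O\bigl(\epsilon^{-1/(1-\beta)}\bigr)$; the lower-order term is already below $\epsilon$ for the same $N$, which establishes the stated $O(\epsilon^{-\frac{1}{1-\beta}})$ iteration bound.
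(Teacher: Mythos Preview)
Your proposal is correct and follows essentially the same route as the paper: take total expectation in \eqref{exp-red}, divide by $\alpha_k$, sum over $k$, handle the weighted telescoping sum $\sum_k \alpha_k^{-1}(\E[f(x_k)]-\E[f(x_{k+1})])$ by Abel summation using $\E[f(x_k)]\le M_f$ from Theorem~\ref{thm3.1} and $\E[f(x_{N+1})]\ge f^{low}$, and bound $\sum_k \alpha_k$ by the integral $\int_1^N x^{-\beta}\,dx$. The only cosmetic difference is that the paper keeps $\alpha_k$ symbolic through the Abel step and substitutes \eqref{alpha-spec} at the end, whereas you substitute first; the argument and the identification of the key obstacle are identical.
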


\begin{proof}
Taking expectation on both sides of \eqref{exp-red} and summing over $k=1,\ldots,N$ yields
\begin{align*}
\frac{1}{2}\underline{\kappa}\sum_{k=1}^N \E[\|\nabla f(x_k)\|^2] & \le \sum_{k=1}^N\frac{1}{\alpha_k}(\E[f(x_k)]-\E[f(x_{k+1})]) + \frac{L\sigma^2 \bar{\kappa}^2}{2m}\sum_{k=1}^N \alpha_k \\
& = \frac{1}{\alpha_1}f(x_1) + \sum_{k=2}^N \left(\frac{1}{ \alpha_k}-\frac{1}{\alpha_{k-1}}\right)\E[f(x_k)] - \frac{\E[f(x_{N+1})]}{\alpha_N} + \frac{L\sigma^2 \bar{\kappa}^2}{2m}\sum_{k=1}^N \alpha_k \\
& \le \frac{M_f}{\alpha_1} + M_f \sum_{k=2}^N \left(\frac{1}{ \alpha_k}-\frac{1}{\alpha_{k-1}}\right) - \frac{f^{low}}{\alpha_N} + \frac{L\sigma^2 \bar{\kappa}^2}{2m}\sum_{k=1}^N \alpha_k \\
& =\frac{M_f-f^{low}}{\alpha_N} + \frac{L\sigma^2 \bar{\kappa}^2}{2m}\sum_{k=1}^N \alpha_k  \\
& \le \frac{L(M_f-f^{low}) \bar{\kappa}^2}{\underline{\kappa}}N^\beta + \frac{\sigma^2 \underline{\kappa}}{2(1-\beta)m}(N^{1-\beta}-1),
\end{align*}
which results in \eqref{conv-rate},
where the second inequality is due to \eqref{M_f} and the last inequality is due to \eqref{alpha-spec}. Then for a given $\epsilon>0$,  to guarantee that $\frac{1}{N}\sum_{k=1}^N \E[\|\nabla f(x_k)\|^2] \le \epsilon$, it suffices to require that
\[
 \frac{2L(M_f-f^{low}) \bar{\kappa}^2}{\underline{\kappa}^2}N^{\beta-1} + \frac{\sigma^2 }{(1-\beta)m}(N^{-\beta}-N^{-1})< \epsilon.
\]
Since $\beta\in(0.5,1)$, it follows that the number of iterations $N$ needed is at most $O(\epsilon^{-\frac{1}{1-\beta}})$.
\end{proof}

\begin{rem}
Note that Theorem \ref{thm2.3} also provides iteration complexity analysis for the classic SGD method, which can be regarded as a special case of SQN with $H_k = I$. To the best of our knowledge, our complexity result in Theorem \ref{thm2.3} is new for both SGD and stochastic quasi-Newton methods.
\end{rem}

\subsection{Complexity of SQN with random output and constant step size}\label{sec:RSSA-2}

We analyze the $\SFO$-calls complexity of SQN when the output is randomly chosen from $\{x_i\}_{i=1}^N$, where $N$ is the maximum iteration number. Our results in this subsection are motivated by the randomized stochastic gradient (RSG) method proposed by Ghadimi and Lan \cite{gl13}. RSG runs SGD 
for $R$ iterations, where $R$ is a randomly chosen integer from $\{1,\ldots,N\}$ with a specifically defined probability mass function $P_R$. In \cite{gl13} it is proved that under certain conditions on the step size and $P_R$, $O(1/\epsilon^2)$ $\SFO$-calls are needed by SGD to guarantee $\E[\|\nabla f(x_R)\|^2] \leq \epsilon$. We show below that under similar conditions, the same complexity holds for our SQN.

\begin{thm} \label{thm3.2}
Suppose that assumptions {\bf AS.1-4} hold, and that $\alpha_k$ in SQN (Algorithm \ref{sso-uncons}) is chosen such that $0<\alpha_k\le 2\underline{\kappa}/(L\bar{\kappa}^2)$ for all $k$ with $\alpha_k<2\underline{\kappa}/(L\bar{\kappa}^2)$ for at least one $k$. Moreover, for a given integer $N$, let $R$ be a random variable with the probability mass function
\be \label{P_R}
P_R(k):=\mathrm{Prob}\{R=k\}= \frac{ \alpha_k \underline{\kappa} - \alpha_k^2 L \bar{\kappa}^2/2}{\sum_{k=1}^N
\left ( \alpha_k \underline{\kappa} - \alpha_k^2 L \bar{\kappa}^2/2\right)}, \quad k=1,\ldots,N.
\ee
Then we have
\be \label{exp-gra}
\E[\|\nabla f(x_R)\|^2]\le \frac{D_f + (\sigma^2 L \bar{\kappa}^2)/2\sum_{k=1}^N(\alpha_k^2/m_k)}{\sum_{k=1}^N
\left ( \alpha_k \underline{\kappa} - \alpha_k^2 L \bar{\kappa}^2/2\right)},
\ee
where $D_f:=f(x_1)-f^{low}$ and the expectation is taken with respect to $R$ and $\xi_{[N]}$. Moreover, if we choose $\alpha_k = \underline{\kappa}/(LC_{u}^2)$ and $m_k = m$ for all $k=1,\ldots,N$, then \eqref{exp-gra} reduces to
\be \label{exp-g}
\E[\|\nabla f(x_R)\|^2] \le \frac{2L \bar{\kappa}^2D_f}{N \underline{\kappa}^2} + \frac{\sigma^2}{{m}}.
\ee
\end{thm}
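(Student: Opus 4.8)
The plan is to adapt the randomized-stochastic-gradient argument of Ghadimi and Lan \cite{gl13} to the quasi-Newton setting. The crucial observation is that I should \emph{not} invoke Lemma \ref{lem3.1} directly: its conclusion \eqref{exp-red} was obtained using the tighter bound $\alpha_k \le \underline{\kappa}/(L\bar{\kappa}^2)$ together with the diminishing-stepsize condition \eqref{alpha-inf}, neither of which is available here (the step size is allowed to be as large as $2\underline{\kappa}/(L\bar{\kappa}^2)$ and need not be summable). Instead, I would start from the intermediate inequality \eqref{lemma2.1-proof-1},
\[
\E[f(x_{k+1})|x_k] \le f(x_k) - \left(\alpha_k \underline{\kappa} - \frac{L}{2}\alpha_k^2\bar{\kappa}^2\right)\|\nabla f(x_k)\|^2 + \frac{L\sigma^2\bar{\kappa}^2}{2m_k}\alpha_k^2,
\]
which is established from {\bf AS.1--4} alone and does not rely on \eqref{alpha-inf}. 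The standing hypothesis $\alpha_k \le 2\underline{\kappa}/(L\bar{\kappa}^2)$ guarantees that each coefficient $\alpha_k\underline{\kappa} - \alpha_k^2 L\bar{\kappa}^2/2$ is nonnegative, so it can legitimately serve as an unnormalized weight, and the assumption that strict inequality holds for at least one $k$ makes the normalizing sum strictly positive, so that $P_R$ in \eqref{P_R} is a well-defined probability mass function.

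Next I would take total expectation with respect to $\xi_{[k]}$ on both sides, using the tower property $\E[\E[f(x_{k+1})|x_k]] = \E[f(x_{k+1})]$, and sum the resulting inequalities over $k=1,\ldots,N$. The $\E[f(x_k)]$ terms telescope, leaving $f(x_1) - \E[f(x_{N+1})]$, which is bounded above by $D_f = f(x_1)-f^{low}$ since $f(x_{N+1}) \ge f^{low}$ by {\bf AS.1}. This yields the key estimate
\[
\sum_{k=1}^N \left(\alpha_k\underline{\kappa} - \frac{L}{2}\alpha_k^2\bar{\kappa}^2\right)\E[\|\nabla f(x_k)\|^2] \le D_f + \frac{L\sigma^2\bar{\kappa}^2}{2}\sum_{k=1}^N \frac{\alpha_k^2}{m_k}.
\]

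Finally, I would invoke the definition of the random index $R$: since $R$ is drawn independently according to \eqref{P_R}, the expectation over both $R$ and $\xi_{[N]}$ satisfies
\[
\E[\|\nabla f(x_R)\|^2] = \frac{\sum_{k=1}^N \left(\alpha_k\underline{\kappa} - \alpha_k^2 L\bar{\kappa}^2/2\right)\E[\|\nabla f(x_k)\|^2]}{\sum_{k=1}^N \left(\alpha_k\underline{\kappa} - \alpha_k^2 L\bar{\kappa}^2/2\right)}.
\]
Dividing the key estimate by the (positive) normalizing constant then produces \eqref{exp-gra} at once. The reduced bound \eqref{exp-g} follows by direct substitution of the constant choices $\alpha_k = \underline{\kappa}/(L\bar{\kappa}^2)$ and $m_k=m$: each weight collapses to $\underline{\kappa}^2/(2L\bar{\kappa}^2)$, so the denominator becomes $N\underline{\kappa}^2/(2L\bar{\kappa}^2)$, and the two numerator terms simplify to $2L\bar{\kappa}^2 D_f/(N\underline{\kappa}^2)$ and $\sigma^2/m$ respectively.

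I expect no serious analytical obstacle: the argument is essentially bookkeeping. The one point demanding care is the handling of the two independent sources of randomness — the per-step inequality must be de-conditioned correctly via the tower property before summing, and the final averaging identity relies on $R$ being independent of $\xi_{[N]}$. The only substantive structural point, rather than a difficulty, is the deliberate decision to retain the exact coefficient $\alpha_k\underline{\kappa} - \alpha_k^2 L\bar{\kappa}^2/2$ instead of the cruder $\tfrac12\alpha_k\underline{\kappa}$ used in Lemma \ref{lem3.1}; this is precisely what permits the larger step sizes $\alpha_k \le 2\underline{\kappa}/(L\bar{\kappa}^2)$ and dictates the form of $P_R$.
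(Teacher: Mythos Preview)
Your proposal is correct and follows essentially the same route as the paper's own proof. The only cosmetic difference is that the paper restarts from the pathwise inequality \eqref{red-ori} and carries the cross terms $\langle\nabla f(x_k),\delta_k\rangle$ and $\langle\nabla f(x_k),H_k\delta_k\rangle$ explicitly before taking expectation over $\xi_{[N]}$, whereas you shortcut this by invoking the already-conditioned bound \eqref{lemma2.1-proof-1} and then applying the tower property per step; the resulting telescoped estimate and the $P_R$-averaging identity are identical.
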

\begin{proof}
From \eqref{red-ori} it follows that
\begin{align*}
f(x_{k+1})  \le &\, f(x_k)  -\alpha_k\langle \nabla f(x_k),  H_k\,\nabla f(x_k)\rangle - \alpha_k\langle \nabla f(x_k),  H_k\,\delta_k\rangle + \\
&\, \frac{L}{2}\alpha_k^2\bar{\kappa}^2 [\|\nabla f(x_k)\|^2 + 2\langle \nabla f(x_k), \delta_k\rangle + \|\delta_k\|^2]\\
\le & \,f(x_k) - \left( \alpha_k \underline{\kappa} - \frac{1 }{2}\alpha_k^2 L \bar{\kappa}^2\right) \|\nabla f(x_k)\|^2 + \frac{1 }{2}\alpha_k^2 L \bar{\kappa}^2\|\delta_k\|^2 + \alpha_k^2 L\bar{\kappa}^2 \langle \nabla f(x_k),\delta_k\rangle \\
&\, - \alpha_k\langle \nabla f(x_k),  H_k\,\delta_k \rangle,
\end{align*}
where $\delta_k=g_k-\nabla f(x_k)$. Now summing $k=1,\ldots,N$ and noticing that $\alpha_k\le 2\underline{\kappa}/(L \bar{\kappa}^2)$, yields
\begin{align}
&\sum_{k=1}^N
\left ( \alpha_k \underline{\kappa} - \frac{L \bar{\kappa}^2}{2}\alpha_k^2\right) \|\nabla f(x_k)\|^2 \notag\\
\le & f(x_1) - f^{low} + \frac{L\bar{\kappa}^2}{2}\sum_{k=1}^N\alpha_k^2\|\delta_k\|^2 + \sum_{k=1}^N(\alpha_k^2 L\bar{\kappa}^2\langle \nabla f(x_k),\delta_k\rangle - \alpha_k\langle \nabla f(x_k), H_k\delta_k \rangle).  \label{sum-gra}
\end{align}
By {\bf AS.2} and {\bf AS.4} we have that
\[
\E_{\xi_k}[\langle \nabla f(x_k), \delta_k \rangle|\xi_{[k-1]}]=0, \quad \mbox{ and } \quad \E_{\xi_k}[\langle \nabla f(x_k),  H_k\,\delta_k \rangle|\xi_{[k-1]}]=0.
\]
Moreover, from \eqref{Exp-G-k} it follows that $\E_{\xi_k}[\|\delta_k\|^2|\xi_{[k-1]}]\le\sigma^2/m_k$. Therefore, taking the expectation on both sides of \eqref{sum-gra} with respect to $\xi_{[N]}$ yields
\be\label{int-exp}
\sum_{k=1}^N
 ( \alpha_k \underline{\kappa} - \alpha_k^2 L \bar{\kappa}^2/2)\E_{\xi_{[N]}}[\|\nabla f(x_k)\|^2] \le f(x_1) - f^{low} + \frac{L \bar{\kappa}^2\sigma^2}{2}\sum_{k=1}^N\frac{\alpha_k^2}{m_k}.
\ee
It follows from the definition of $P_R$ in \eqref{P_R} that
\be
\E[\|\nabla f(x_R)\|^2] = \E_{R,\xi_{[N]}}[\|\nabla f(x_R)\|^2] = \frac{\sum_{k=1}^N
\left ( \alpha_k \underline{\kappa} - \alpha_k^2 L \bar{\kappa}^2/2\right)\E_{\xi_{[N]}}[\|\nabla f(x_k)\|^2]}{\sum_{k=1}^N
\left ( \alpha_k \underline{\kappa} - \alpha_k^2 L \bar{\kappa}^2/2\right)},
\ee
which together with \eqref{int-exp} implies \eqref{exp-gra}.
\end{proof}

\begin{rem}
Note that in Theorem \ref{thm3.2}, $\alpha_k$'s are not required to be diminishing, and they can be constant as long as they are upper bounded by $2\underline{\kappa}/(L\bar{\kappa}^2)$.
\end{rem}

We now show that the $\SFO$ complexity of SQN with random output and constant step size is $O(\epsilon^{-2})$.

\begin{cor}\label{cor3.4}
Assume the conditions in Theorem \ref{thm3.2} hold, and $\alpha_k = \underline{\kappa}/(LC_{u}^2)$ and $m_k = m$ for all $k=1,\ldots,N$. Let $\bar{N}$ be the total number of $\SFO$-calls needed to calculate stochastic gradients $g_k$ in SQN (Algorithm \ref{sso-uncons}). For a given accuracy tolerance $\epsilon>0$, we assume that
\be \label{bar-N}
\bar{N}\ge\max\left\{\frac{C_1^2}{\epsilon^2} + \frac{4C_2}{\epsilon},\frac{\sigma^2}{L^2\tilde{D}}\right\}, \quad \mbox{ where } \quad C_1=\frac{4\sigma  \bar{\kappa}^2D_f}{\underline{\kappa}^2\sqrt{\tilde{D}}} + \sigma L\sqrt{\tilde{D}}, \quad C_2=\frac{4L \bar{\kappa}^2D_f}{\underline{\kappa}^2},
\ee
where $\tilde{D}$ is a problem-independent positive constant. Moreover, we assume that the batch size satisfies
\be \label{batch-size-m}
m_k = {m} := \left\lceil \min \left\{ \bar{N}, \max\left\{1, \frac{\sigma}{L}\sqrt{\frac{\bar{N}}{\tilde{D}}}\right\}\right\}\right\rceil.
\ee
Then it holds that $\E[\|\nabla f(x_R)\|^2]\le\epsilon$,
where the expectation is taken with respect to $R$ and $\xi_{[N]}$. 
\end{cor}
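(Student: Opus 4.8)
The plan is to start from the bound \eqref{exp-g} established in Theorem \ref{thm3.2}, which applies precisely under the present hypotheses ($\alpha_k=\underline{\kappa}/(L\bar{\kappa}^2)$ and $m_k=m$), and to convert the iteration count $N$ into the $\SFO$-call budget. Since each of the $N$ iterations draws $m$ samples, we have $\bar{N}=Nm$, i.e. $N=\bar{N}/m$; substituting this into \eqref{exp-g} gives
\[
\E[\|\nabla f(x_R)\|^2] \le \frac{2L\bar{\kappa}^2 D_f}{\underline{\kappa}^2}\,\frac{m}{\bar{N}} + \frac{\sigma^2}{m}.
\]
The corollary then reduces to showing that the specific choice of $m$ in \eqref{batch-size-m}, together with the two lower bounds on $\bar{N}$ in \eqref{bar-N}, forces this right-hand side below $\epsilon$. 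Writing $m^\star := \frac{\sigma}{L}\sqrt{\bar{N}/\tilde{D}}$ for the unrounded target batch size and recalling $C_2 = 4L\bar{\kappa}^2 D_f/\underline{\kappa}^2 = 2\,(2L\bar{\kappa}^2 D_f/\underline{\kappa}^2)$, the first term equals $\tfrac{1}{2}C_2\,m/\bar{N}$.

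Next I would dispose of the $\min\{\bar{N},\cdot\}$ in \eqref{batch-size-m}. The second hypothesis $\bar{N}\ge \sigma^2/(L^2\tilde{D})$ is exactly equivalent to $m^\star\le\bar{N}$, so (as $\bar{N}\ge 1$) the minimum is inactive and $m=\lceil\max\{1,m^\star\}\rceil$. From this I obtain the two-sided estimate $m^\star\le m\le 2\max\{1,m^\star\}$, the lower bound because $\lceil\max\{1,m^\star\}\rceil\ge m^\star$ and the upper bound because $\lceil y\rceil\le 2y$ for $y\ge 1$. The lower bound controls the noise term, $\frac{\sigma^2}{m}\le\frac{\sigma^2}{m^\star}=\frac{\sigma L\sqrt{\tilde{D}}}{\sqrt{\bar{N}}}$. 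The upper bound controls the curvature term: splitting on whether $m^\star\le 1$ or $m^\star>1$, it is at most $\max\{C_2/\bar{N},\,C_1'/\sqrt{\bar{N}}\}\le C_2/\bar{N}+C_1'/\sqrt{\bar{N}}$, where $C_1':=4\sigma\bar{\kappa}^2 D_f/(\underline{\kappa}^2\sqrt{\tilde{D}})$ (the factors of $L$ cancel). Adding the two pieces and using $C_1 = C_1' + \sigma L\sqrt{\tilde{D}}$ yields the clean intermediate bound
\[
\E[\|\nabla f(x_R)\|^2] \le \frac{C_2}{\bar{N}} + \frac{C_1}{\sqrt{\bar{N}}}.
\]

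Finally I would verify that $\bar{N}\ge C_1^2/\epsilon^2 + 4C_2/\epsilon$ makes this last quantity $\le\epsilon$. Setting $t=\sqrt{\bar{N}}$, the target $C_2/t^2 + C_1/t\le\epsilon$ is equivalent to the quadratic inequality $\epsilon t^2 - C_1 t - C_2\ge 0$, whose positive root is $t_0 = (C_1+\sqrt{C_1^2+4\epsilon C_2})/(2\epsilon)$. The hypothesis gives $t\ge\sqrt{C_1^2+4\epsilon C_2}/\epsilon$, and since $\sqrt{C_1^2+4\epsilon C_2}\ge C_1$ one checks $\sqrt{C_1^2+4\epsilon C_2}/\epsilon\ge t_0$, so $t\ge t_0$ and the bound follows. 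I expect the only genuine friction to be the bookkeeping around the $\lceil\cdot\rceil$, $\min$, and $\max$ in \eqref{batch-size-m}: one must confirm that the $\bar{N}\ge\sigma^2/(L^2\tilde{D})$ bound neutralizes the $\min$ and simultaneously pins $m$ within a constant factor of $m^\star$, so that both terms decay like $1/\sqrt{\bar{N}}$ rather than leaving a residual $\sigma^2$ when $m=1$. Once the clean bound $C_2/\bar{N}+C_1/\sqrt{\bar{N}}$ is in hand, the remaining step is the routine quadratic estimate above.
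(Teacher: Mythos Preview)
Your proposal is correct and follows essentially the same route as the paper: start from \eqref{exp-g}, convert $N$ to $\bar N/m$, bound $m$ above and below using \eqref{batch-size-m} and the hypothesis $\bar N\ge\sigma^2/(L^2\tilde D)$, arrive at $C_2/\bar N + C_1/\sqrt{\bar N}$, and finish with the quadratic root estimate. The only cosmetic differences are that the paper writes $N=\lceil\bar N/m\rceil\ge\bar N/(2m)$ and $m\le 1+m^\star$ (picking up the factor $2$ in $N$), whereas you take $N=\bar N/m$ and $m\le 2\max\{1,m^\star\}$ (picking up the factor $2$ in $m$); both routes produce the identical constants $C_1,C_2$ and the same final inequality.
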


\begin{proof}
Note that the number of iterations of SQN is at most $N=\lceil \bar{N}/{m}\rceil$. Obviously, $N\ge \bar{N}/(2{m})$. From \eqref{exp-g} we have that
\begin{align}
\E[\|\nabla f(x_R)\|^2] & \le \frac{2L  \bar{\kappa}^2D_{f}}{N \underline{\kappa}^2} + \frac{\sigma^2}{ {m}}  \le  \frac{4L \bar{\kappa}^2 D_{f}}{\bar{N} \underline{\kappa}^2}{m} + \frac{\sigma^2}{ {m}} \notag \\
& \le  \frac{4L \bar{\kappa}^2 D_{f} }{\bar{N} \underline{\kappa}^2}\left(1+\frac{\sigma}{L}\sqrt{\frac{\bar{N}}{\tilde{D}}}\right) +  \max\left\{\frac{ \sigma^2}{\bar{N}}, \frac{ \sigma L\sqrt{\tilde{D}}}{\sqrt{\bar{N}}}\right\}. \label{exp-1}
\end{align}
\eqref{bar-N} indicates that
\[ \sqrt{\bar{N}} \ge\frac{\sqrt{C_1^2 + 4\epsilon C_2}}{\epsilon } \ge \frac{\sqrt{C_1^2 + 4\epsilon C_2} + C_1}{2\epsilon}. \]
\eqref{bar-N} also implies that $\sigma^2/\bar{N}\le \sigma L\sqrt{\tilde{D}}/\sqrt{\bar{N}}$. Then \eqref{exp-1} yields that
\[
\E[\|\nabla f(x_R)\|^2]  \le \frac{4L \bar{\kappa}^2 D_{f} }{\bar{N}\underline{\kappa}^2}\left(1+\frac{\sigma}{L}\sqrt{\frac{\bar{N}}{\tilde{D}}}\right) +  \frac{ \sigma L\sqrt{\tilde{D}}}{\sqrt{\bar{N}}}
 = \frac{C_1}{\sqrt{\bar{N}}} + \frac{C_2}{\bar{N}}
 \le \epsilon,
\]
which completes the proof.
\end{proof}

\begin{rem}\label{rem3.2}
In Corollary \ref{cor3.4} we did not consider the $\SFO$-calls that are involved in updating $H_k$ in line 3 of SQN. In the next section, we consider a specific updating scheme to generate $H_k$, and analyze the total $\SFO$-calls complexity of SQN including the generation of the $H_k$.
\end{rem}

\section{Stochastic damped L-BFGS method}\label{sec:LBFGS}

In this section, we propose a specific way, namely a damped L-BFGS method (SdLBFGS), to generate $H_k$ in SQN (Algorithm \ref{sso-uncons}) that satisfies assumptions {\bf AS.3} and {\bf AS.4}. We also provide an efficient way to compute $H_kg_k$ without generating $H_k$ explicitly.

Before doing this, we first describe a stochastic damped BFGS method as follows. We generate an auxiliary stochastic gradient at $x_k$ using the samplings from the $(k-1)$-st iteration:
\[\bar{g}_k := \frac{1}{m_{k-1}}\sum_{i=1}^{m_{k-1}} g(x_{k},\xi_{k-1,i}).\]
{Note that we assume that our $\SFO$ can separate two arguments $x_k$ and $\xi_k$ in the stochastic gradient $g(x_{k},\xi_{k-1})$ and generate an output $g(x_k; \xi_{k-1,i})$.}
The stochastic gradient difference is defined as
\be\label{y-k}
y_{k-1} : = \bar{g}_{k} - g_{k-1} = \frac{\sum_{i=1}^{m_{k-1}}g(x_{k},\xi_{k-1,i}) - g(x_{k-1},\xi_{k-1,i})}{m_{k-1}}.
\ee
The iterate difference is still defined as $s_{k-1}=x_{k}-x_{k-1}$. We then define
\be\label{y-k-ini}
\bar{y}_{k-1} = \hat{\theta}_{k-1} y_{k-1} + (1-\hat{\theta}_{k-1})B_{k-1}s_{k-1},
\ee
where
\be\label{def-theta}
\hat{\theta}_{k-1} = \begin{cases}
\frac{0.75s_{k-1}^\top B_{k-1}s_{k-1}}{s_{k-1}^\top B_{k-1}s_{k-1} - s_{k-1}^\top y_{k-1}}, & \mbox{if } s_{k-1}^\top y_{k-1} < 0.25s_{k-1}^\top B_{k-1}s_{k-1},\\
1, & \mbox{otherwise. }
\end{cases}
\ee
Note that if $B_{k-1}\succ 0$, then $0<\hat{\theta}_{k-1}\le 1$. Our stochastic damped BFGS approach updates $B_{k-1}$ as
\be \label{damp-bfgs}
B_{k} = B_{k-1} + \frac{\bar{y}_{k-1} \bar{y}_{k-1}^\top }{s_{k-1}^\top \bar{y}_{k-1}} - \frac{B_{k-1}s_{k-1}s_{k-1}^\top B_{k-1}}{s_{k-1}^\top B_{k-1}s_{k-1}}.
\ee
According to the Sherman-Morrison-Woodbury formula, this corresponds to updating  $H_{k}=B_{k}^{-1}$ as
\be\label{damp-H}
H_{k} = (I-\rho_{k-1} s_{k-1} \bar{y}_{k-1}^\top) H_{k-1} (I -\rho_{k-1} \bar{y}_{k-1} s_{k-1}^\top) + \rho_{k-1} s_{k-1} s_{k-1}^\top,
\ee
where $\rho_{k-1} = (s_{k-1}^\top \bar{y}_{k-1})^{-1}$.
The following lemma shows that the damped BFGS updates \eqref{damp-bfgs} and \eqref{damp-H} preserve the positive definiteness of $B_k$ and $H_k$.
\begin{lem}\label{per-posi}
For $\bar{y}_{k-1}$ defined in \eqref{y-k-ini}, $s_{k-1}^\top \bar{y}_{k-1}\ge 0.25 s_{k-1}^\top B_{k-1} s_{k-1}$. Moreover, if $B_{k-1}=H_{k-1}^{-1}\succ0$, then $B_{k}$ and $H_{k}$ generated by the damped BFGS updates \eqref{damp-bfgs} and \eqref{damp-H} are both positive definite.
\end{lem}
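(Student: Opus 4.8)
The plan is to dispatch the two claims in order: first the scalar curvature inequality, which is an elementary two-case computation, and then the positive-definiteness statement, which reduces to the classical BFGS argument once the curvature condition is in hand. Throughout I abbreviate $a := s_{k-1}^\top B_{k-1}s_{k-1}$ and $b := s_{k-1}^\top y_{k-1}$, and I record the standing requirement $s_{k-1}\neq 0$, so that $a>0$ whenever $B_{k-1}\succ 0$ (otherwise $\rho_{k-1}$ and the updates are undefined).

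For the first claim I would simply split on the two branches of the definition \eqref{def-theta}. In the branch $b\ge 0.25a$ we have $\hat{\theta}_{k-1}=1$, hence $\bar{y}_{k-1}=y_{k-1}$ and $s_{k-1}^\top\bar{y}_{k-1}=b\ge 0.25a$ directly. In the branch $b<0.25a$, taking the inner product of \eqref{y-k-ini} with $s_{k-1}$ gives $s_{k-1}^\top\bar{y}_{k-1}=\hat{\theta}_{k-1}b+(1-\hat{\theta}_{k-1})a=a+\hat{\theta}_{k-1}(b-a)$; substituting $\hat{\theta}_{k-1}=0.75a/(a-b)$ collapses this expression to exactly $0.25a$. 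Thus $s_{k-1}^\top\bar{y}_{k-1}\ge 0.25\,s_{k-1}^\top B_{k-1}s_{k-1}$ in both cases, with equality attained in the second.

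For the positive-definiteness claim, the essential consequence of the first part is that $s_{k-1}^\top\bar{y}_{k-1}\ge 0.25a>0$, so $\rho_{k-1}=(s_{k-1}^\top\bar{y}_{k-1})^{-1}$ is a well-defined positive scalar and the modified curvature condition holds. I would then establish $B_k\succ 0$ by evaluating the quadratic form of \eqref{damp-bfgs}: for any $z\neq 0$,
\[
z^\top B_k z = \Big(z^\top B_{k-1}z - \frac{(z^\top B_{k-1}s_{k-1})^2}{s_{k-1}^\top B_{k-1}s_{k-1}}\Big) + \frac{(z^\top\bar{y}_{k-1})^2}{s_{k-1}^\top\bar{y}_{k-1}}.
\]
The second term is nonnegative since $s_{k-1}^\top\bar{y}_{k-1}>0$. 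For the parenthesized term I would apply the Cauchy--Schwarz inequality in the inner product $\langle u,v\rangle_{B_{k-1}}:=u^\top B_{k-1}v$, which is genuine because $B_{k-1}\succ 0$; this yields $(z^\top B_{k-1}s_{k-1})^2\le (z^\top B_{k-1}z)(s_{k-1}^\top B_{k-1}s_{k-1})$, so the parenthesized term is nonnegative as well. Strict positivity then follows by an equality-case check: if $z$ is not a multiple of $s_{k-1}$, Cauchy--Schwarz is strict and the parenthesized term is positive; if $z=c\,s_{k-1}$ with $c\neq 0$, that term vanishes but the second term equals $c^2 s_{k-1}^\top\bar{y}_{k-1}>0$. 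Hence $z^\top B_k z>0$ for all $z\neq 0$, i.e. $B_k\succ 0$. Since \eqref{damp-H} is precisely the Sherman--Morrison--Woodbury inverse of \eqref{damp-bfgs}, it follows that $H_k=B_k^{-1}\succ 0$.

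The computations are routine; the one point I would treat as the main obstacle is the equality-case analysis that upgrades nonnegativity of the quadratic form to strict positive definiteness, which is exactly where the strict inequality $s_{k-1}^\top\bar{y}_{k-1}>0$ supplied by the first part of the lemma (together with $B_{k-1}\succ 0$ and $s_{k-1}\neq 0$) is indispensable.
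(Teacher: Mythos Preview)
Your proof is correct. The first claim is handled exactly as in the paper: expand $s_{k-1}^\top\bar{y}_{k-1}$ using \eqref{y-k-ini} and split on the two cases of \eqref{def-theta}.

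For the positive-definiteness claim you take a different route from the paper. You work with the $B_k$ update \eqref{damp-bfgs} and use Cauchy--Schwarz in the $B_{k-1}$-inner product, together with an explicit equality-case analysis, to show $z^\top B_k z>0$; then $H_k=B_k^{-1}\succ0$ follows. The paper instead works directly with the $H_k$ update \eqref{damp-H}, writing
\[
z^\top H_k z = w^\top H_{k-1} w + \rho_{k-1}(s_{k-1}^\top z)^2,\qquad w:=(I-\rho_{k-1}\bar{y}_{k-1}s_{k-1}^\top)z,
\]
from which positivity is immediate once one notes that $s_{k-1}^\top z=0$ forces $w=z\neq0$. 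The paper's route is slightly shorter because the inverse update is already in ``congruence plus rank-one positive term'' form, so no Cauchy--Schwarz is needed; your route has the minor advantage of making the role of the curvature condition in the $B_k$ update transparent and of carrying out the equality-case argument in full, which the paper leaves implicit.
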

\begin{proof}
From \eqref{y-k-ini} and \eqref{def-theta} we have that
\begin{align*}
s_{k-1}^\top \bar{y}_{k-1} = & \, \hat{\theta}_{k-1}(s_{k-1}^\top {y}_{k-1} - s_{k-1}^\top B_{k-1}s_{k-1}) + s_{k-1}^\top B_{k-1}s_{k-1}\\
               = & \, \begin{cases}
               0.25s_{k-1}^\top B_{k-1}s_{k-1},\quad & \mbox{if }s_{k-1}^\top {y}_{k-1} < 0.25 s_{k-1}^\top B_{k-1}s_{k-1},\\
               s_{k-1}^\top {y}_{k-1},\quad & \mbox{otherwise},
               \end{cases}
\end{align*}
which implies $s_{k-1}^\top\bar{y}_{k-1}\geq 0.25s_{k-1}^\top B_{k-1}s_{k-1}$. Therefore, if $B_{k-1}\succ 0$, it follows that $\rho_{k-1}>0$. As a result, for $H_k$ defined in \eqref{damp-H} and any nonzero vector $z\in\R^n$, we have
\[
z^\top H_{k} z = z^\top(I-\rho_{k-1} s_{k-1} \bar{y}_{k-1}^\top) H_{k-1} (I -\rho_{k-1} \bar{y}_{k-1} s_{k-1}^\top)z + \rho_{k-1} (s_{k-1}^\top z)^2>0,
\]
given that $H_{k-1}\succ 0$. Therefore, both $H_{k}$ and $B_{k}$ defined in \eqref{damp-H} and \eqref{damp-bfgs} are positive definite.
\end{proof}

Computing $H_k$ by the stochastic damped BFGS update \eqref{damp-H}, and computing the step direction $H_kg_k$ requires $O(n^2)$ multiplications. This is costly if $n$ is large.
The L-BFGS method originally proposed by Liu and Nocedal \cite{Liu-Nocedal-89} can be adopted here to reduce this computational cost.
The L-BFGS method can be described as follows for deterministic optimization problems. Given an initial estimate $H_{k,0}\in\R^{n\times n}$ of the inverse Hessian at the current iterate $x_k$ and two sequences $\{s_j\}$, $\{y_j\}$, $j=k-p,\ldots,k-1$, where $p$ is the memory size, the L-BFGS method updates $H_{k,i}$ recursively as
\be\label{L-BFGS}
H_{k,i} = (I-\rho_{j} s_{j} y_{j}^\top)H_{k,i-1}(I-\rho_{j} y_{j} s_{j}^\top) + \rho_{j} s_{j} s_{j}^\top, \quad j=k-(p-i+1); \, i=1,\ldots,p,
\ee
where $\rho_{j}=(s_j^\top y_j)^{-1}$. The output $H_{k,p}$ is then used as the estimate of the inverse Hessian at $x_k$ to compute the search direction at the $k$-th iteration. It can be shown that if the sequence of pairs $\{s_j,y_j\}$ satisfy the curvature condition $s_j^\top y_j>0$, $j=k-1,\ldots,k-p$, then $H_{k,p}$ is positive definite provided that $H_{k,0}$ is positive definite.
Recently, stochastic L-BFGS methods have been proposed for solving strongly convex problems in \cite{BHNS-14,mr14-2,MNJ2015,Gower-icml-2016}. However, the theoretical convergence analyses in these papers do not apply to nonconvex problems. We now show how to design a  stochastic damped L-BFGS formula for nonconvex problems.

Suppose that in the past iterations the algorithm generated $s_j$ and $\bar{y}_j$ that satisfy
\[
s_j^\top \bar{y}_j\ge 0.25 s_j^\top H_{j+1,0}^{-1}s_j,\quad j=k-p,\ldots,k-2.
\]
Then at the current iterate, we compute $s_{k-1}=x_k-x_{k-1}$ and $y_{k-1}$ by \eqref{y-k}. Since $s_{k-1}^\top y_{k-1}$ may not be positive, motivated by the stochastic damped BFGS update \eqref{y-k-ini}-\eqref{damp-H}, we define a new vector $\{\bar{y}_{k-1}\}$ as
\be\label{bar-y}
\bar{y}_{k-1} = \theta_{k-1} y_{k-1} + (1-\theta_{k-1})H_{k,0}^{-1}s_{k-1},
\ee
where
\be\label{theta_k-1}
\theta_{k-1} = \begin{cases}
\frac{0.75 s_{k-1}^\top H_{k,0}^{-1}s_{k-1}}{s_{k-1}^\top H_{k,0}^{-1}s_{k-1} - s_{k-1}^\top y_{k-1}}, & \mbox{if } s_{k-1}^\top y_{k-1} < 0.25s_{k-1}^\top H_{k,0}^{-1}s_{k-1},\\
1, & \mbox{otherwise. }
\end{cases}
\ee
Similar to Lemma \ref{per-posi}, we can prove that
\[
s_{k-1}^\top \bar{y}_{k-1}\ge 0.25 s_{k-1}^\top H_{k,0}^{-1} s_{k-1}.
\]
Using $s_j$ and $\bar{y}_j$, $j=k-p,\ldots,k-1$, we define the stochastic damped L-BFGS formula as
\be\label{mod-L-BFGS}
H_{k,i} = (I-\rho_{j} s_{j} \bar{y}_{j}^\top)H_{k,i-1}(I-\rho_{j} \bar{y}_{j} s_{j}^\top) + \rho_{j} s_{j} s_{j}^\top, \quad j=k-(p-i+1); \, i=1,\ldots,p,
\ee
where $\rho_j = (s_j^\top \bar{y}_j)^{-1}$.
As in the analysis in Lemma \ref{per-posi}, by induction we can show that $H_{k,i}\succ0$, $i=1,\ldots,p$. Note that when $k< p$, we use $s_j$ and $\bar{y}_j$, $j=1,\ldots,k$ to execute the stochastic damped L-BFGS update.

We next discuss the choice of $H_{k,0}$. A popular choice in the standard L-BFGS method is to set $H_{k,0} = \frac{s_{k-1}^\top y_{k-1}}{y_{k-1}^\top y_{k-1}} I$. Since $s_{k-1}^\top y_{k-1}$ may not be positive for nonconvex problems, we set
\be\label{gama-k}
H_{k,0}=\gamma_k^{-1} I, \quad \mbox{ where } \gamma_k = \max\left\{
 \frac{y_{k-1}^\top y_{k-1}}{s_{k-1}^\top y_{k-1}} , \delta\right\}\geq \delta,
\ee
where $\delta>0$ is a given constant.

To prove that $H_k=H_{k,p}$ generated by \eqref{mod-L-BFGS}-\eqref{gama-k} satisfies assumptions {\bf AS.3} and {\bf AS.4}, we need to make the following assumption.
\begin{itemize}
\item[{\bf AS.5}]\quad The function $F(x,\xi)$ is twice continuously differentiable with respect to $x$. The stochastic gradient $g(x,\xi)$ is computed as $g(x,\xi)=\nabla_x F(x,\xi)$, and there exists a positive constant $\kappa$ such that $\|\nabla_{xx}^2 F(x,\xi)\|\le \kappa$, for any $x,\xi$.
\end{itemize}
Note that {\bf AS.5} is equivalent to requiring that $-\kappa I\preceq \nabla_{xx}^2 F(x,\xi) \preceq\kappa I$, rather than the strong convexity assumption $0 \prec \underline{\kappa} I \preceq \nabla_{xx}^2 F(x,\xi) \preceq\kappa I$ required in \cite{BHNS-14,mr14-2}.
The following lemma shows that the eigenvalues of $H_k$ are bounded below away from zero under assumption {\bf AS.5}.
\begin{lem}\label{low}
Suppose that {\bf AS.5} holds. Given $H_{k,0}$ defined in \eqref{gama-k}, suppose that $H_k=H_{k,p}$ is updated through the stochastic damped L-BFGS formula \eqref{mod-L-BFGS}. Then all the eigenvalues of $H_k$ satisfy
\be\label{low-H}
\lambda(H_k) \ge \left(\frac{4p\kappa^2}{\delta} +(4p+1)(\kappa+\delta)\right)^{-1}.
\ee
\end{lem}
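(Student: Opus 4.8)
The plan is to pass to the inverse. Since each $H_{k,i}\succ0$ (this is the induction already used in Lemma~\ref{per-posi}), a uniform lower bound on the spectrum of $H_k=H_{k,p}$ is equivalent to a uniform upper bound on $\lambda_{\max}(B_k)$, where $B_k:=H_{k,p}^{-1}$; indeed $\lambda_{\min}(H_k)=1/\lambda_{\max}(B_k)$. By the Sherman--Morrison--Woodbury identity, the inverse recursion \eqref{mod-L-BFGS} is equivalent to the direct damped BFGS recursion
\[
B_{k,i} = B_{k,i-1} - \frac{B_{k,i-1}s_{j}s_{j}^\top B_{k,i-1}}{s_{j}^\top B_{k,i-1}s_{j}} + \frac{\bar{y}_{j}\bar{y}_{j}^\top}{s_{j}^\top \bar{y}_{j}},\qquad j=k-(p-i+1),
\]
initialized at $B_{k,0}=H_{k,0}^{-1}=\gamma_k I$, so that $\lambda_{\max}(B_{k,0})=\gamma_k$.

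Next I would telescope the largest eigenvalue across the $p$ updates. The middle term above is negative semidefinite and may be dropped, after which the remaining perturbation $\bar{y}_{j}\bar{y}_{j}^\top/(s_{j}^\top\bar{y}_{j})$ is rank-one positive semidefinite with only nonzero eigenvalue $\|\bar{y}_{j}\|^2/(s_{j}^\top\bar{y}_{j})$. Weyl's inequality then gives
\[
\lambda_{\max}(B_{k,i}) \le \lambda_{\max}(B_{k,i-1}) + \frac{\|\bar{y}_{j}\|^2}{s_{j}^\top \bar{y}_{j}},
\]
and unrolling from $i=0$ to $p$ yields $\lambda_{\max}(B_k)\le \gamma_k+\sum_{j=k-p}^{k-1}\|\bar{y}_{j}\|^2/(s_{j}^\top\bar{y}_{j})$, a sum of $p$ increments plus the initial scaling.

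It then remains to bound each increment and the initial term by problem constants. Two ingredients feed this. First, AS.5 gives, via the mean value theorem, $y_{j}=\bar{G}_{j}s_{j}$ with $\bar{G}_{j}$ symmetric and $\|\bar{G}_{j}\|\le\kappa$, hence $\|y_{j}\|\le\kappa\|s_{j}\|$. Second, the argument of Lemma~\ref{per-posi}, applied with $H_{j+1,0}^{-1}=\gamma_{j+1}I$, yields the damped curvature lower bound $s_{j}^\top\bar{y}_{j}\ge \tfrac14\gamma_{j+1}\|s_{j}\|^2\ge\tfrac14\delta\|s_{j}\|^2$, while the triangle inequality in \eqref{bar-y} gives $\|\bar{y}_{j}\|\le(\kappa+\gamma_{j+1})\|s_{j}\|$. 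Combining these,
\[
\frac{\|\bar{y}_{j}\|^2}{s_{j}^\top\bar{y}_{j}} \le \frac{4(\kappa+\gamma_{j+1})^2}{\gamma_{j+1}} = 4\left(\frac{\kappa^2}{\gamma_{j+1}}+2\kappa+\gamma_{j+1}\right),
\]
so the whole estimate reduces to controlling the scalars $\gamma_{j+1}$.

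The step I expect to be the crux is exactly controlling $\gamma_{j}=\max\{\|y_{j-1}\|^2/(s_{j-1}^\top y_{j-1}),\delta\}$ uniformly, since it enters both the initial term $\lambda_{\max}(B_{k,0})=\gamma_k$ and every increment through the display above, and since $s_{j-1}^\top y_{j-1}$ carries no a priori positive lower bound. The plan is to establish $\delta\le\gamma_{j}\le\kappa+\delta$: the left inequality is definitional, and for the right one I would exploit $\|y_{j-1}\|\le\kappa\|s_{j-1}\|$ on the branch where $\|y_{j-1}\|^2/(s_{j-1}^\top y_{j-1})$ is the active argument of the maximum. With $\gamma_{j+1}\in[\delta,\kappa+\delta]$ secured, $\kappa^2/\gamma_{j+1}\le\kappa^2/\delta$ and $\gamma_{j+1}\le\kappa+\delta$, so each increment is at most $4\kappa^2/\delta$ plus a constant multiple of $(\kappa+\delta)$ (the precise constant coming from a careful estimate of $\|\bar{y}_{j}\|^2$ in the damped case, where $s_j^\top\bar y_j=\tfrac14\gamma_{j+1}\|s_j\|^2$ exactly); summing the $p$ increments and adding $\gamma_k\le\kappa+\delta$ produces the bound $4p\kappa^2/\delta+(4p+1)(\kappa+\delta)$ on $\lambda_{\max}(B_k)$, and taking reciprocals gives \eqref{low-H}. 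I anticipate that making the uniform upper bound on $\gamma_j$ fully rigorous — i.e.\ ensuring the initial scaling $H_{j,0}=\gamma_j^{-1}I$ is not driven too small by a tiny positive $s_{j-1}^\top y_{j-1}$ — is the genuinely delicate point, and it is precisely here that the bounded-Hessian assumption AS.5, together with the damping mechanism \eqref{bar-y}--\eqref{theta_k-1}, must do the work that convexity would otherwise provide.
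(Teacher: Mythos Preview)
Your overall strategy is exactly the paper's: pass to $B_k=H_k^{-1}$, use the equivalent direct BFGS recursion, drop the negative semidefinite term, and telescope $\|B_{k,i}\|\le\|B_{k,i-1}\|+\bar y_j^\top\bar y_j/(s_j^\top\bar y_j)$ from $B_{k,0}=\gamma_kI$ over $p$ steps.

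The one substantive divergence is how you bound the increment $\bar y_j^\top\bar y_j/(s_j^\top\bar y_j)$. Your triangle inequality $\|\bar y_j\|\le(\kappa+\gamma_{j+1})\|s_j\|$ combined with $s_j^\top\bar y_j\ge\tfrac14\gamma_{j+1}\|s_j\|^2$ gives at best $4(\kappa+\gamma_{j+1})^2/\gamma_{j+1}\le 4\kappa^2/\delta+8\kappa+4\delta$, which after summing and adding $\gamma_k\le\kappa+\delta$ yields $4p\kappa^2/\delta+(4p+1)(\kappa+\delta)+4p\kappa$, not the stated bound. The paper does not use the triangle inequality; it expands
\[
\frac{\|\theta_j y_j+(1-\theta_j)\gamma_{j+1}s_j\|^2}{\tfrac14\gamma_{j+1}\|s_j\|^2}
=4\theta_j^2\frac{y_j^\top y_j}{\gamma_{j+1}s_j^\top s_j}+8\theta_j(1-\theta_j)\frac{y_j^\top s_j}{s_j^\top s_j}+4(1-\theta_j)^2\gamma_{j+1},
\]
bounds each term separately using $\|y_j\|\le\kappa\|s_j\|$ and $|y_j^\top s_j|\le\kappa\|s_j\|^2$, simplifies the last two terms to $4[(1-\theta_j^2)\kappa+(1-\theta_j)^2\delta]$, and then uses $0<\theta_j\le1$ to obtain exactly $4\kappa^2/\delta+4(\kappa+\delta)$. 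This quadratic expansion, not a cruder case split on whether $\theta_j<1$, is what produces the precise constant $(4p+1)$.

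On the point you flag as the crux, the bound $\delta\le\gamma_{j+1}\le\kappa+\delta$: the paper simply states this as a fact and uses it both for $\|B_{k,0}\|=\gamma_k$ and inside the increment bound. Your instinct that $\|y_{j-1}\|\le\kappa\|s_{j-1}\|$ alone does not obviously control $\|y_{j-1}\|^2/(s_{j-1}^\top y_{j-1})$ when $s_{j-1}^\top y_{j-1}$ is small and positive is well-founded; the paper does not elaborate on this step beyond asserting it, so you are not missing a hidden argument there.
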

\begin{proof}
According to Lemma \ref{per-posi}, $H_{k,i}\succ 0$, $i=1,\ldots,p$. To prove that the eigenvalues of $H_k$ are bounded below away from zero, it suffices to prove that the eigenvalues of $B_k=H_k^{-1}$ are bounded from above. From the damped L-BFGS formula \eqref{mod-L-BFGS}, $B_k=B_{k,p}$ can be computed recursively as
\[
B_{k,i} = B_{k,i-1} + \frac{\bar{y}_j\bar{y}_j^\top}{s_j^\top \bar{y}_j} - \frac{B_{k,i-1}s_j s_j^\top B_{k,i-1}}{s_j^\top B_{k,i-1}s_j}, \quad j=k-(p-i+1);i=1,\ldots,p,
\]
starting from $B_{k,0}=H_{k,0}^{-1}=\gamma_k I$.
Since $B_{k,0}\succ0$, Lemma \ref{per-posi} indicates that $B_{k,i}\succ0$ for $i=1,\ldots,p$. Moreover, the following inequalities hold:
{\be\label{tr-B}
\|B_{k,i}\| \le \left\|B_{k,i-1} - \frac{B_{k,i-1}s_j s_j^\top B_{k,i-1}}{s_j^\top B_{k,i-1}s_j}\right\| + \left\|\frac{\bar{y}_j\bar{y}_j^\top}{s_j^\top \bar{y}_j}\right\| \le \|B_{k,i-1}\| + \left\|\frac{\bar{y}_j\bar{y}_j^\top}{s_j^\top \bar{y}_j}\right\| =\|B_{k,i-1}\| + \frac{\bar{y}_j^\top\bar{y}_j}{s_j^\top \bar{y}_j}.
\ee}
From the definition of $\bar{y}_j$ in \eqref{bar-y} and the facts that $s_j^\top \bar{y}_j\ge 0.25 s_j^\top B_{j+1,0}s_j$ and $B_{j+1,0}=\gamma_{j+1}I$ from \eqref{gama-k}, we have that for any $j=k-1,\ldots,k-p$
\begin{align}
\frac{\bar{y}_j^\top\bar{y}_j}{s_j^\top \bar{y}_j} \le  \, 4 \frac{\|\theta_j y_j + (1-\theta_j)B_{j+1,0}s_j\|^2}{s_j^\top B_{j+1,0}s_j} 
=   \, 4\theta_j^2\frac{y_j^\top y_j}{\gamma_{j+1}s_j^\top s_j} + 8\theta_j(1-\theta_j)\frac{y_j^\top s_j}{s_j^\top s_j} + 4(1-\theta_j)^2 \gamma_{j+1}. \label{proof-lemma-low-1}
\end{align}
Note that from \eqref{y-k} we have
\[
y_j = \frac{\sum_{l=1}^{m_j}g(x_{j+1},\xi_{j,l})-g(x_j,\xi_{j,l})}{m_j} = \frac{1}{m_j}\left(\sum_{l=1}^{m_j}\overline{\nabla^2_{xx} F}(x_j,\xi_{j,l},s_j)\right)s_j,
\]
where $\overline{\nabla^2_{xx} F}(x_j,\xi_{j,l},s_j) = \int_0^1\nabla_{xx}^2F(x_j+ts_j,\xi_{j,l})dt$, because $g(x_{j+1},\xi_{j,l})-g(x_j,\xi_{j,l})=\int_0^1\frac{dg}{dt}(x_j+ts_j,\xi_{j,l})dt = \int_0^1\nabla_{xx}^2 F(x_j+ts_j,\xi_{j,l})s_jdt$. Therefore, for any $j=k-1,\ldots,k-p$, from \eqref{proof-lemma-low-1}, and the facts that $0<\theta_j\le1$ and $\delta\le\gamma_{j+1} \le\kappa+\delta$, and the assumption {\bf AS.5} it follows that
\begin{align}
\frac{\bar{y}_j^\top\bar{y}_j}{s_j^\top \bar{y}_j} \le  \, \frac{4\theta_j^2\kappa^2}{\gamma_{j+1}} + 8\theta_j(1-\theta_j)\kappa + 4(1-\theta_j)^2\gamma_{j+1} 
\le \, \frac{4\theta_j^2\kappa^2}{\delta} + 4[(1-\theta_j^2)\kappa + (1-\theta_j)^2\delta] 
\le \, \frac{4\kappa^2}{\delta} + 4(\kappa+\delta). \label{proof-lemma-low-2}
\end{align}
Combining \eqref{tr-B} and \eqref{proof-lemma-low-2} yields
\[
\|B_{k,i}\|\le \|B_{k,i-1}\| + 4\left(\frac{\kappa^2}{\delta}+\kappa+\delta\right).
\]
By induction, we have that
\begin{align*}
\|B_k\|=\|B_{k,p}\| \le \, \|B_{k,0}\| + 4p\left(\frac{\kappa^2}{\delta} + \kappa+\delta\right)
\le \, \frac{4p\kappa^2}{\delta} +(4p+1)(\kappa+\delta),
\end{align*}
which implies \eqref{low-H}.
\end{proof}

We now prove that $H_k$ is uniformly bounded above.
\begin{lem}\label{upp}
Suppose that the assumption {\bf AS.5} holds. Given $H_{k,0}$ defined in \eqref{gama-k}, suppose that $H_k=H_{k,p}$ is updated through the stochastic damped L-BFGS formula \eqref{mod-L-BFGS}. Then $H_k$ satisfies
\be\label{up-H}
\lambda_{\max}(H_k) = \|H_k\| \le \left(\frac{\alpha^{2p}-1}{\alpha^{2}-1}\right)\frac{4}{\delta} + \frac{\alpha^{2p}}{\delta},
\ee
where $\alpha = (4\kappa + 5\delta)/\delta$, and $\lambda_{\max}(H_k)$ and $\|H_k\|$ denote, respectively, the maximum eigenvalue and operator norm $\|\cdot\|$ of $H_k$.
\end{lem}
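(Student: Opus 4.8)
The plan is to reduce the problem to a single scalar recursion of the form $\|H_{k,i}\| \le \alpha^2\|H_{k,i-1}\| + 4/\delta$ and then unroll it from the initialization $\|H_{k,0}\| = \gamma_k^{-1} \le 1/\delta$. First I would take operator norms directly in the damped L-BFGS recursion \eqref{mod-L-BFGS}. Writing $A := I-\rho_j s_j \bar{y}_j^\top$ and noting that $A^\top = I-\rho_j \bar{y}_j s_j^\top$, the first term on the right of \eqref{mod-L-BFGS} is $A\,H_{k,i-1}\,A^\top$ with $H_{k,i-1}$ symmetric positive definite (Lemma \ref{per-posi}), so its norm is at most $\|A\|^2\|H_{k,i-1}\|$; the rank-one term obeys $\|\rho_j s_j s_j^\top\| = \rho_j\|s_j\|^2$. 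This yields
\[
\|H_{k,i}\| \le \|I-\rho_j s_j \bar{y}_j^\top\|^2\,\|H_{k,i-1}\| + \rho_j\|s_j\|^2 .
\]

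Next I would bound the two scalar quantities uniformly in $j = k-(p-i+1)$. As established in the proof of Lemma \ref{low}, $s_j^\top\bar{y}_j \ge 0.25\,\gamma_{j+1}\|s_j\|^2 \ge 0.25\,\delta\|s_j\|^2$ (using $\gamma_{j+1}\ge\delta$ from \eqref{gama-k}), which immediately gives $\rho_j\|s_j\|^2 \le 4/\delta$. For the factor $\|A\|$, the triangle inequality gives $\|I-\rho_j s_j \bar{y}_j^\top\| \le 1 + \rho_j\|s_j\|\,\|\bar{y}_j\|$, so the key remaining estimate is an upper bound on $\|\bar{y}_j\|$. Here I would again invoke the proof of Lemma \ref{low}: the averaged-Hessian representation of $y_j$ together with {\bf AS.5} yields $\|y_j\| \le \kappa\|s_j\|$, and since $\bar{y}_j = \theta_j y_j + (1-\theta_j)\gamma_{j+1}s_j$ by \eqref{bar-y} with $0<\theta_j\le 1$ and $\gamma_{j+1}\le\kappa+\delta$, I get $\|\bar{y}_j\| \le \theta_j\kappa\|s_j\| + (1-\theta_j)(\kappa+\delta)\|s_j\| \le (\kappa+\delta)\|s_j\|$. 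Combining with the lower bound on $s_j^\top\bar{y}_j$ gives $\rho_j\|s_j\|\,\|\bar{y}_j\| \le 4(\kappa+\delta)/\delta$, hence $\|I-\rho_j s_j \bar{y}_j^\top\| \le 1 + 4(\kappa+\delta)/\delta = (4\kappa+5\delta)/\delta = \alpha$.

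Substituting $\|A\|^2 \le \alpha^2$ and $\rho_j\|s_j\|^2\le 4/\delta$ into the norm recursion gives $\|H_{k,i}\| \le \alpha^2\|H_{k,i-1}\| + 4/\delta$ for $i=1,\ldots,p$. Unrolling from $\|H_{k,0}\| = \gamma_k^{-1}\le 1/\delta$ then produces
\[
\|H_{k,p}\| \le \alpha^{2p}\|H_{k,0}\| + \frac{4}{\delta}\sum_{i=0}^{p-1}\alpha^{2i} \le \frac{\alpha^{2p}}{\delta} + \frac{4}{\delta}\cdot\frac{\alpha^{2p}-1}{\alpha^2-1},
\]
which is exactly \eqref{up-H}. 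I expect the main obstacle to be the bound on $\|\bar{y}_j\|$: because of the damping in \eqref{bar-y}--\eqref{theta_k-1}, $\bar{y}_j$ is a convex combination of $y_j$ and $\gamma_{j+1}s_j$, and one must control each piece by a clean multiple of $\|s_j\|$ — the first through the Hessian-norm bound in {\bf AS.5}, the second through $\gamma_{j+1}\le\kappa+\delta$ — so that the estimate is $O(\|s_j\|)$ with an explicit constant and does not degrade when the true curvature $s_j^\top y_j$ is small or negative.
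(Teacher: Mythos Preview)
Your proposal is correct and follows essentially the same route as the paper: both derive the scalar recursion $\|H_{k,i}\|\le\alpha^2\|H_{k,i-1}\|+4/\delta$ with $\alpha=(4\kappa+5\delta)/\delta$ and unroll it from $\|H_{k,0}\|\le 1/\delta$. The only cosmetic difference is that you factor the update as $A\,H_{k,i-1}A^\top$ and bound $\|A\|\le 1+\rho_j\|s_j\|\|\bar y_j\|$ via a direct estimate $\|\bar y_j\|\le(\kappa+\delta)\|s_j\|$, whereas the paper expands the product term-by-term and reaches the same constant through the bound $\bar y_j^\top\bar y_j/(s_j^\top\bar y_j)<\tfrac{4}{\delta}(\kappa+\delta)^2$ from \eqref{proof-lemma-low-2} combined with $\|s_j\|^2/(s_j^\top\bar y_j)\le 4/\delta$.
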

\begin{proof}
For notational simplicity, let $H=H_{k,i-1}$, $H^+=H_{k,i}$, $s=s_j$, $\bar{y}=\bar{y}_j$, $\rho=(s_j^\top\bar{y}_j)^{-1}=(s^\top\bar{y})^{-1}$. Now \eqref{damp-H} can be written as
\[H^+ = H -\rho(H\bar{y}s^\top+s\bar{y}^\top H) + \rho ss^\top + \rho^2(\bar{y}^\top H \bar{y})ss^\top.\]
Using the facts that $\|uv^\top\|=\|u\|\cdot\|v\|$ for any vectors $u$ and $v$, $\rho s^\top s = \rho\|s\|^2 = \frac{s^\top s}{s^\top\bar{y}}\leq \frac{4}{\delta}$, and $\frac{\|\bar{y}\|^2}{s^\top\bar{y}}\leq 4\left(\frac{\kappa^2}{\delta}+\kappa+\delta\right)<\frac{4}{\delta}(\kappa+\delta)^2$, which follows from \eqref{proof-lemma-low-2},
we have that
\[\|H^+\|\leq \|H\| + \frac{2\|H\|\cdot\|\bar{y}\|\cdot\|s\|}{s^\top\bar{y}}+\frac{s^\top s}{s^\top\bar{y}}+\frac{s^\top s}{s^\top\bar{y}}\cdot\frac{\|H\|\cdot\|\bar{y}\|^2}{s^\top\bar{y}}.\]
Noting that $\frac{\|\bar{y}\|\|s\|}{s^\top\bar{y}}=\left[\frac{\|\bar{y}\|^2}{s^\top\bar{y}}\cdot\frac{\|s\|^2}{s^\top\bar{y}}\right]^{1/2}$,
it follows that
\begin{align*}\|H^+\|\leq \, \left(1+2\cdot\frac{4}{\delta}(\kappa+\delta)+\left(\frac{4}{\delta}(\kappa+\delta)\right)^2\right)\|H\|+\frac{4}{\delta}   =   \, (1+(4\kappa+4\delta)/\delta)^2\|H\|+\frac{4}{\delta}.\end{align*}
Hence, by induction we obtain \eqref{up-H}.
\end{proof}


Lemmas \ref{low} and \ref{upp} indicate that $H_k$ generated by \eqref{bar-y}-\eqref{mod-L-BFGS} satisfies assumption {\bf AS.3}. Moreover, since $y_{k-1}$ defined in \eqref{y-k} does not depend on random samplings in the $k$-th iteration, it follows that $H_k$ depends only on $\xi_{[k-1]}$ and assumption {\bf AS.4} is satisfied.

To analyze the cost of computing the step direction $H_kg_k$, note that from \eqref{mod-L-BFGS}, $H_k$ can be represented as
\be\label{mod-LBFGS-1}
H_{k,i} = (I-\rho_{j} s_{j} \bar{y}_{j}^\top)H_{k,i-1}(I-\rho_{j} \bar{y}_{j} s_{j}^\top) + \rho_{j} s_{j} s_{j}^\top, \quad j=k-(p-i+1); \, i=1,\ldots,p,
\ee
which is the same as the classical L-BFGS formula in \eqref{L-BFGS}, except that $y_j$ is replaced by  $\bar{y}_j$. Hence, we can compute the step direction by the two-loop recursion, implemented in the following procedure. 

{\floatname{algorithm}{Procedure}
\begin{algorithm}[ht]
\caption{{\bf Step computation using stochastic damped L-BFGS}}\label{comp-mod-LBFGS}
\begin{algorithmic}[1]
\REQUIRE {Let $x_k$ be current iterate. Given the stochastic gradient $g_{k-1}$ at iterate $x_{k-1}$, the random variable $\xi_{k-1}$, the batch size $m_{k-1}$, $s_{j}$, $\bar{y}_{j}$ and $\rho_j$, $j=k-p,\ldots,k-2$ and $u_0=g_k$,.}
\ENSURE {$H_kg_k=v_p$.}
\STATE Set $s_{k-1} = x_k - x_{k-1}$ and calculate $y_{k-1}$ through \eqref{y-k}
\STATE Calculate $\gamma_k$ through \eqref{gama-k}
\STATE Calculate $\bar{y}_{k-1}$ through \eqref{bar-y} and $\rho_{k-1}=(s_{k-1}^\top \bar{y}_{k-1})^{-1}$
\FOR {$i=0,\ldots,\min\{p,k-1\}-1$}
  \STATE Calculate $\mu_i=\rho_{k-i-1}u_i^\top s_{k-i-1}$
  \STATE Calculate $u_{i+1} = u_i - \mu_i \bar{y}_{k-i-1}$
  \ENDFOR
  \STATE Calculate $v_0=\gamma_k^{-1}u_p$
  \FOR {$i=0,\ldots,\min\{p,k-1\}-1$}
  \STATE Calculate $\nu_i=\rho_{k-p+i}v_i^\top \bar{y}_{k-p+i}$
  \STATE Calculate $
  v_{i+1} = v_i + (\mu_{p-i-1}-\nu_i)s_{k-p+i}$.
  \ENDFOR
\end{algorithmic}
\end{algorithm}}

We now analyze the computational cost of Procedure \ref{comp-mod-LBFGS}. In Step 2, the computation of $\gamma_k$ involves $y_{k-1}^\top y_{k-1}$ and $s_{k-1}^\top y_{k-1}$, which take $2n$ multiplications. In Step 3, from the definition of $\bar{y}_k$ in \eqref{bar-y}, since $s_{k-1}^\top y_{k-1}$ has been obtained in a previous step, one only needs to compute $s_{k-1}^\top s_{k-1}$ and some scalar-vector products, thus the computation of $\bar{y}_{k-1}$ takes $3n$ multiplications. Due to the fact that
\[
\rho_{k-1}^{-1} = s_{k-1}^\top \bar{y}_{k-1} = \begin{cases}
0.25\gamma_k s_{k-1}^\top s_{k-1},\quad & \mbox{if }s_{k-1}^\top {y}_{k-1} < 0.25 \gamma_k s_{k-1}^\top s_{k-1},\\
               s_{k-1}^\top {y}_{k-1},\quad & \mbox{otherwise},
\end{cases}
\]
all involved computations have been done for $\rho_{k-1}$. Furthermore, the first loop Steps 4-7 involves $p$ scalar-vector multiplications and $p$ vector inner products. So does the second loop Steps 9-12. Including the product $\gamma_k^{-1}u_p$, the whole procedure takes $(4p+6)n$ multiplications. 

Notice that in Step 1 of Procedure \ref{comp-mod-LBFGS}, the computation of $y_{k-1}$ involves the evaluation of $\sum_{i=1}^{m_{k-1}}g(x_{k},\xi_{k-1,i})$, which requires $m_{k-1}$ $\SFO$-calls. As a result, when Procedure \ref{comp-mod-LBFGS} is plugged into SQN (Algorithm \ref{sso-uncons}), the total number of $\SFO$-calls needed in the $k$-th iteration becomes $m_k+m_{k-1}$. This leads to the following overall $\SFO$-calls complexity result for our stochastic damped L-BFGS method.

\begin{thm}
Suppose that {\bf AS.1}, {\bf AS.2} and {\bf AS.5} hold. Let $N_{sfo}$ denote the total number of $\SFO$-calls in SQN (Algorithm \ref{sso-uncons}) in which Procedure \ref{comp-mod-LBFGS} is used to compute $H_{k}g_k$. Under the same conditions as in Corollary \ref{cor3.4}, to achieve  $\E[\|\nabla f(x_R)\|^2]\le \epsilon$, $N_{sfo}\leq 2\bar{N}$, where $\bar{N}$ satisfies \eqref{bar-N}, i.e., is $O(\epsilon^{-2})$.
\end{thm}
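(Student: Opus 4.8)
The plan is to treat this statement as an essentially immediate consequence of Corollary~\ref{cor3.4}, with the only genuine work being a careful accounting of the $\SFO$-calls consumed by Procedure~\ref{comp-mod-LBFGS}. First I would verify that the SdLBFGS updates produce a Hessian-inverse approximation $H_k$ that satisfies the hypotheses under which Corollary~\ref{cor3.4} was established. Under {\bf AS.5}, Lemmas~\ref{low} and~\ref{upp} bound the eigenvalues of $H_k$ uniformly away from zero and from above, so {\bf AS.3} holds with $\underline{\kappa}$ and $\bar{\kappa}$ given by the right-hand sides of \eqref{low-H} and \eqref{up-H}; and since $y_{k-1}$ in \eqref{y-k} is formed using only the samples $\xi_{k-1}$ from the previous iteration, the matrix $H_k$ depends only on $\xi_{[k-1]}$, so {\bf AS.4} holds. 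Combined with {\bf AS.1} and {\bf AS.2}, all assumptions of Corollary~\ref{cor3.4} are in force.

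Next I would invoke Corollary~\ref{cor3.4} verbatim, with the step size $\alpha_k=\underline{\kappa}/(L\bar{\kappa}^2)$ and the constant batch size $m_k=m$ prescribed in \eqref{batch-size-m}. This guarantees $\E[\|\nabla f(x_R)\|^2]\le\epsilon$ provided the number of gradient-evaluation $\SFO$-calls reaches a value $\bar{N}$ satisfying \eqref{bar-N}; by definition $\bar{N}=\sum_{k=1}^N m_k$, where $N$ is the iteration count. Since the dominant terms in \eqref{bar-N} are $C_1^2/\epsilon^2$ and $4C_2/\epsilon$, we record $\bar{N}=O(\epsilon^{-2})$ for use at the end.

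The actual content of the theorem is then the bookkeeping of the \emph{total} oracle cost. As noted just before the statement, Procedure~\ref{comp-mod-LBFGS} must evaluate $\sum_{i=1}^{m_{k-1}}g(x_k,\xi_{k-1,i})$ in Step~1 in order to form $y_{k-1}$ via \eqref{y-k}, costing an additional $m_{k-1}$ calls on top of the $m_k$ calls spent on $g_k$; the remaining steps of the procedure are pure linear algebra and invoke no oracle. Hence iteration $k$ consumes $m_k+m_{k-1}$ calls (with no $y_0$ term at $k=1$), and summing gives
\[
N_{sfo}=\sum_{k=1}^N m_k+\sum_{k=2}^N m_{k-1}=\bar{N}+\sum_{k=1}^{N-1}m_k\le 2\sum_{k=1}^N m_k=2\bar{N}.
\]
Combining this with $\bar{N}=O(\epsilon^{-2})$ yields $N_{sfo}\le 2\bar{N}=O(\epsilon^{-2})$, which is the claim.

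I do not expect a deep obstacle here, since the argument is a reduction to already-proven results; the single point requiring care is recognizing that reusing the previous iteration's samples $\xi_{k-1}$ to evaluate the gradient at the new point $x_k$ exactly \emph{doubles}, and does not more than double, the oracle budget. One should also confirm that the same batch size $m$ from \eqref{batch-size-m} that makes Corollary~\ref{cor3.4} applicable remains admissible for SdLBFGS, which it is, precisely because {\bf AS.3}--{\bf AS.4} have been verified for the SdLBFGS-generated $H_k$ in the first step.
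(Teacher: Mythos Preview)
Your proposal is correct and follows exactly the approach the paper intends. In fact the paper gives no formal proof of this theorem at all: it simply remarks, just before stating the result, that Procedure~\ref{comp-mod-LBFGS} costs an extra $m_{k-1}$ $\SFO$-calls at iteration $k$ so that the per-iteration cost becomes $m_k+m_{k-1}$, and treats the theorem as an immediate consequence of Corollary~\ref{cor3.4} once {\bf AS.3}--{\bf AS.4} have been verified for SdLBFGS via Lemmas~\ref{low} and~\ref{upp}. Your write-up makes these steps explicit and adds the clean telescoping computation $N_{sfo}=\sum_{k=1}^N m_k+\sum_{k=2}^N m_{k-1}\le 2\bar{N}$, which is precisely the missing arithmetic.
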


{
\section{SdLBFGS with a Variance Reduction Technique}\label{sec:sdlbfgs-vr}



Motivated by the recent advance of SVRG for nonconvex minimization proposed in \cite{SVR-nonconvex} and \cite{AllenHazan2016-nonconvex}, we now present a variance reduced SdLBFGS method, which we call SdLBFGS-VR, for solving \eqref{sum-f-i}. Here, the mini-batch stochastic gradient is defined as $g(x) = \frac{1}{|\C{K}|}\sum_{i\in\C{K}}\nabla f_i(x)$, where the subsample set $\C{K}\subseteq [T]$ is randomly chosen from $\{1,\ldots,T\}$. SdLBFGS-VR allows a constant step size, and thus can accelerate the convergence speed of SdLBFGS. SdLBFGS-VR is summarized in Algorithm \ref{L-BFGS-VR}.

\begin{algorithm}[H]
\caption{\quad \bf SdLBFGS with variance reduction (SdLBFGS-VR)}
\label{L-BFGS-VR}
\begin{algorithmic}[1]
\REQUIRE $\tilde{x}_0\in\R^d$, $\{\alpha_t\}_{t=0}^{m-1}$
\ENSURE {Iterate $x$ chosen uniformly random from $\{x_t^{k+1}: t=0,\ldots,q-1; k=0,\ldots,N-1\}$}
\FOR {$k=0,\ldots,N-1$}
   \STATE $x_0^{k+1}=\tilde{x}_k$
   \STATE compute $\nabla f(\tilde{x}_k)$
   \FOR {$t=0,\ldots,q-1$}
     \STATE Sample a set $\C{K}\subseteq [T]$ with $|\C{K}|=m$
     \STATE Compute $g_t^{k+1}=\nabla f_{\C{K}}(x_t^{k+1}) - \nabla f_{\C{K}}(\tilde{x}_k) + \nabla f(\tilde{x}_k)$ where $\nabla f_{\C{K}}(x_t^{k+1})=\frac{1}{m}\sum_{i\in\mathcal{K}}\nabla f_i(x_t^{k+1})$
     \STATE Compute $d_t^{k+1}=-H_t^{k+1}g_t^{k+1}$ through Procedure \ref{comp-mod-LBFGS}
     \STATE Set $x_{t+1}^{k+1}=x_t^{k+1}+\alpha_t^{k+1}d_t^{k+1}$
   \ENDFOR
   \STATE Set $\tilde{x}_{k+1}=x_q^{k+1}$
\ENDFOR
\end{algorithmic}
\end{algorithm}

We now analyze the $\SFO$-calls complexity of Algorithm \ref{L-BFGS-VR}. 
We first analyze the convergence rate of SdLBFGS-VR, essentially following \cite{SVR-nonconvex}.

\begin{lem}
Suppose assumptions {\bf AS.1, AS.2} and {\bf AS.5} hold. Set $c_t^{k+1}=c_{t+1}^{k+1}(1 + \alpha_t^{k+1}\beta_t + 2L^2(\alpha_t^{k+1})^2\bar{\kappa}^2/m) + (\alpha_t^{k+1})^2L^3 \bar{\kappa}^2/m$.
It holds that
\be\label{lem-4-1-conclusion}
\Gamma_t^{k+1}\E[\|\nabla f(x_t^{k+1})\|^2]\le R_t^{k+1}-R_{t+1}^{k+1},
\ee
where $R_t^{k+1} = \E[f(x_t^{k+1})+ c_t^{k+1}\|x_t^{k+1}-\tilde{x}_k\|^2]$ and $\Gamma_t^{k+1}=\alpha_t^{k+1}\underline{\kappa}- c_{t+1}^{k+1}\alpha_t^{k+1}\bar{\kappa}^2/\beta_t - (\alpha_t^{k+1})^2L\bar{\kappa}^2-2c_{t+1}^{k+1}(\alpha_t^{k+1})^2\bar{\kappa}^2$
for any $\beta_t>0$.
\end{lem}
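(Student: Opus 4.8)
The plan is to follow the template of the nonconvex SVRG analysis of \cite{SVR-nonconvex}, adapted to the quasi-Newton scaling matrix $H_t^{k+1}$ and the spectral bounds in {\bf AS.3}. Throughout, I would condition on the history up to the start of inner iteration $t$, so that $x_t^{k+1}$, $\tilde x_k$ and $H_t^{k+1}$ are fixed while the subsample $\C{K}$ is the only source of randomness; note that $H_t^{k+1}$ is determined before $\C{K}$ is drawn, exactly as in {\bf AS.4}. First I would write the $L$-smoothness descent inequality from {\bf AS.1} applied to the update $x_{t+1}^{k+1}=x_t^{k+1}-\alpha_t^{k+1}H_t^{k+1}g_t^{k+1}$,
\[
f(x_{t+1}^{k+1}) \le f(x_t^{k+1}) - \alpha_t^{k+1}\langle \nabla f(x_t^{k+1}), H_t^{k+1}g_t^{k+1}\rangle + \frac{L}{2}(\alpha_t^{k+1})^2\|H_t^{k+1}g_t^{k+1}\|^2,
\]
take expectation over $\C{K}$, and use the unbiasedness $\E[g_t^{k+1}]=\nabla f(x_t^{k+1})$ together with {\bf AS.3} (so that $\langle \nabla f, H_t^{k+1}\nabla f\rangle\ge\underline{\kappa}\|\nabla f\|^2$ and $\|H_t^{k+1}g_t^{k+1}\|^2\le\bar{\kappa}^2\|g_t^{k+1}\|^2$). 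This turns the cross term into $-\alpha_t^{k+1}\underline{\kappa}\|\nabla f(x_t^{k+1})\|^2$ and leaves $\frac{L}{2}(\alpha_t^{k+1})^2\bar{\kappa}^2\E\|g_t^{k+1}\|^2$.

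The key estimate is the SVRG variance bound. Writing $g_t^{k+1}-\nabla f(x_t^{k+1})=Z-\E[Z]$ with $Z=\nabla f_{\C{K}}(x_t^{k+1})-\nabla f_{\C{K}}(\tilde x_k)$, the mini-batch averaging over $|\C{K}|=m$ and the $L$-Lipschitz continuity of each $\nabla f_i$ (a consequence of {\bf AS.5}) give $\E\|g_t^{k+1}-\nabla f(x_t^{k+1})\|^2\le \frac{L^2}{m}\|x_t^{k+1}-\tilde x_k\|^2$, and hence, via $\|a+b\|^2\le 2\|a\|^2+2\|b\|^2$,
\[
\E\|g_t^{k+1}\|^2 \le 2\|\nabla f(x_t^{k+1})\|^2 + \frac{2L^2}{m}\|x_t^{k+1}-\tilde x_k\|^2.
\]
Substituting this into the expected descent inequality produces a bound on $\E[f(x_{t+1}^{k+1})]$ whose coefficient on $\|\nabla f(x_t^{k+1})\|^2$ is $-\alpha_t^{k+1}\underline{\kappa}+(\alpha_t^{k+1})^2L\bar{\kappa}^2$ and whose coefficient on $\|x_t^{k+1}-\tilde x_k\|^2$ is $(\alpha_t^{k+1})^2L^3\bar{\kappa}^2/m$; the factor $2$ carried by the variance bound is precisely what forces the factor-$2$ constants appearing in $c_t^{k+1}$ and $\Gamma_t^{k+1}$.

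Next I would derive a recursion for the distance term by expanding $\|x_t^{k+1}-\tilde x_k-\alpha_t^{k+1}H_t^{k+1}g_t^{k+1}\|^2$, taking expectation, bounding the cross term by Young's inequality with parameter $\beta_t$ (namely $-2\alpha_t^{k+1}\langle x_t^{k+1}-\tilde x_k, H_t^{k+1}\nabla f(x_t^{k+1})\rangle\le \alpha_t^{k+1}\beta_t\|x_t^{k+1}-\tilde x_k\|^2 + (\alpha_t^{k+1}\bar{\kappa}^2/\beta_t)\|\nabla f(x_t^{k+1})\|^2$), and controlling the quadratic term by the same $\bar{\kappa}^2$ bound and the variance estimate. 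This yields
\[
\E\|x_{t+1}^{k+1}-\tilde x_k\|^2 \le \Big(1+\alpha_t^{k+1}\beta_t+\frac{2L^2(\alpha_t^{k+1})^2\bar{\kappa}^2}{m}\Big)\E\|x_t^{k+1}-\tilde x_k\|^2 + \Big(\frac{\alpha_t^{k+1}\bar{\kappa}^2}{\beta_t}+2(\alpha_t^{k+1})^2\bar{\kappa}^2\Big)\E\|\nabla f(x_t^{k+1})\|^2.
\]

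Finally, I would form $R_{t+1}^{k+1}=\E[f(x_{t+1}^{k+1})]+c_{t+1}^{k+1}\E\|x_{t+1}^{k+1}-\tilde x_k\|^2$ by adding $c_{t+1}^{k+1}$ times the distance recursion to the expected descent inequality. Collecting the coefficient of $\E\|x_t^{k+1}-\tilde x_k\|^2$ reproduces exactly the prescribed $c_t^{k+1}$, so that this term is absorbed into $R_t^{k+1}$, while collecting the coefficient of $\E\|\nabla f(x_t^{k+1})\|^2$ gives precisely $-\Gamma_t^{k+1}$; rearranging then yields $\Gamma_t^{k+1}\E\|\nabla f(x_t^{k+1})\|^2\le R_t^{k+1}-R_{t+1}^{k+1}$. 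I expect the main obstacle to be the variance bound, since it is the only place where the finite-sum structure of the estimator and the individual-component smoothness enter and it fixes all the nontrivial constants; the rest is the mechanical bookkeeping of matching coefficients to the stated definitions of $c_t^{k+1}$ and $\Gamma_t^{k+1}$.
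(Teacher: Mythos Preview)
Your proposal is correct and follows essentially the same approach as the paper: the descent inequality from {\bf AS.1} combined with the spectral bounds on $H_t^{k+1}$, the SVRG variance estimate $\E\|g_t^{k+1}\|^2\le 2\|\nabla f(x_t^{k+1})\|^2+\frac{2L^2}{m}\|x_t^{k+1}-\tilde x_k\|^2$, and the distance recursion via Young's inequality with parameter $\beta_t$, assembled into the Lyapunov quantity $R_t^{k+1}$. The paper's proof differs only in the order of presentation (it records the distance recursion before substituting the variance bound), and your explicit remark that $H_t^{k+1}$ is fixed before $\C{K}$ is drawn is a point the paper uses implicitly.
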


\begin{proof}
It follows from {\bf AS.1} and the fact that $\underline{\kappa}I\preceq H_t^{k+1} \preceq \bar{\kappa}I$, for any $t=0,\ldots,q-1; k=0,\ldots,N-1$, which follows under assumption {\bf AS.5}, from Lemmas \ref{low} and \ref{upp}, that,
\begin{align}
\E[f(x_{t+1}^{k+1})] \le\, & \E[f(x_t^{k+1}) + \langle \nabla f(x_t^{k+1}), x_{t+1}^{k+1} - x_t^{k+1}\rangle + \frac{L}{2}\|x_{t+1}^{k+1}-x_t^{k+1}\|^2] \nonumber \\
=\, & \E[f(x_t^{k+1}) - \alpha_t^{k+1} \langle \nabla f(x_t^{k+1}), H_t^{k+1}g_t^{k+1}\rangle + \frac{(\alpha_t^{k+1})^2 L }{2} \|H_t^{k+1}g_{t}^{k+1}\|^2] \nonumber \\
\le \, & \E[f(x_t^{k+1}) - \alpha_t^{k+1}\underline{\kappa}\|\nabla f(x_t^{k+1})\|^2 + \frac{(\alpha_t^{k+1})^2 L\bar{\kappa}^2}{2}\|g_t^{k+1}\|^2].\label{lem-4-1-inequality-1}
\end{align}
Moreover, for any $\beta_t>0$, since $g_t^{k+1}$ is an unbiased estimate of $\nabla f(x_t^{k+1})$, it holds that,
\begin{align}
\E[\|x_{t+1}^{k+1} - \tilde{x}_k\|^2] = \, & \E[\| x_{t+1}^{k+1}  - x_t^{k+1}\|^2 + \|x_t^{k+1}- \tilde{x}_k\|^2 + 2\langle x_{t+1}^{k+1} - x_t^{k+1}, x_t^{k+1}- \tilde{x}_k\rangle] \label{lem-4-1-inequality-2} \\
=\, & \E[(\alpha_t^{k+1})^2\|H_t^{k+1}g_t^{k+1}\|^2 + \|x_t^{k+1}- \tilde{x}_k\|^2 ] - 2\alpha_t^{k+1}\langle H_t^{k+1}\nabla f(x_t^{k+1}),x_t^{k+1}-\tilde{x}_k\rangle \nonumber\\
\le\, & \E[(\alpha_t^{k+1})^2\bar{\kappa}^2\|g_t^{k+1}\|^2 + \|x_t^{k+1}- \tilde{x}_k\|^2 ] + \alpha_t^{k+1}\E[\beta_t^{-1}{\|H_t^{k+1}\nabla f(x_t^{k+1})\|^2} + \beta_t\|x_t^{k+1}- \tilde{x}_k\|^2.] \nonumber
\end{align}
Furthermore, the following inequality holds:
\begin{align}
\E[\|g_t^{k+1}\|^2] =& \, \E[\|\nabla f_{\C{K}}(x_t^{k+1}) -\nabla f(x_t^{k+1}) + \nabla f(x_t^{k+1}) -\nabla f_{\C{K}}(\tilde{x}_k)  + \nabla f(\tilde{x}_k)\|^2] \nonumber\\
\le &\,  2\E[\|\nabla f(x_t^{k+1})\|^2] + 2\E[\|\nabla f_{\C{K}}(x_t^{k+1})  -\nabla f_{\C{K}}(\tilde{x}_k) - (\nabla f(x_t^{k+1})  - \nabla f(\tilde{x}_k))\|^2]\nonumber\\
= & \, 2\E[\|\nabla f(x_t^{k+1})\|^2] + 2\E[\|\nabla f_{\C{K}}(x_t^{k+1})  -\nabla f_{\C{K}}(\tilde{x}_k) - \E[\nabla f_{\C{K}}(x_t^{k+1})  -\nabla f_{\C{K}}(\tilde{x}_k)]\|^2]\nonumber\\
\le &\,  2\E[\|\nabla f(x_t^{k+1})\|^2] +  2\E[\|\nabla f_{\C{K}}(x_t^{k+1})  -\nabla f_{\C{K}}(\tilde{x}_k)\|^2]\nonumber\\
= & \, 2\E[\|\nabla f(x_t^{k+1})\|^2] + \frac{2}{m}\E[\|\nabla f_i(x_t^{k+1}) - \nabla f_i(\tilde{x}_k)\|^2]\nonumber\\
\le &\, 2\E[\|\nabla f(x_t^{k+1})\|^2] + \frac{2L^2}{m}\E[\|x_t^{k+1} - \tilde{x}_k\|^2].\label{lem-4-1-inequality-3}
\end{align}

Combining \eqref{lem-4-1-inequality-1}, \eqref{lem-4-1-inequality-2} and \eqref{lem-4-1-inequality-3} yields that,
\begin{align*}
R_{t+1}^{k+1} \le & \,\E[f(x_{t+1}^{k+1}) + c_{t+1}^{k+1}\|x_{t+1}^{k+1}-\tilde{x}_k\|^2]\\
\le &\, \E[f(x_t^{k+1}) - \alpha_t^{k+1}\underline{\kappa}\|\nabla f(x_t^{k+1})\|^2 + \frac{(\alpha_t^{k+1})^2 L\bar{\kappa}^2}{2}\|g_t^{k+1}\|^2 + c_{t+1}^{k+1}(\alpha_t^{k+1})^2\bar{\kappa}^2\E[\|g_t^{k+1}\|^2] + c_{t+1}^{k+1}\E[ \|x_t^{k+1}- \tilde{x}_k\|^2 ] \\
&\, + \alpha_t^{k+1}c_{t+1}^{k+1}\E[\beta_t^{-1}{\bar{\kappa}^2\|\nabla f(x_t^{k+1})\|^2} + \beta_t\|x_t^{k+1}- \tilde{x}_k\|^2]\\
= &\, \E[f(x_t^{k+1})] - (\alpha_t^{k+1}\underline{\kappa}- c_{t+1}^{k+1}\alpha_t^{k+1}\bar{\kappa}^2/\beta_t)\E[\|\nabla f(x_t^{k+1})\|^2] + ((\alpha_t^{k+1})^2 L\bar{\kappa}^2/2+c_{t+1}^{k+1}(\alpha_t^{k+1})^2\bar{\kappa}^2)\E[\|g_t^{k+1}\|^2] \\
&\, + c_{t+1}^{k+1}(1+\alpha_t^{k+1}\beta_t)\E[ \|x_t^{k+1}- \tilde{x}_k\|^2 ] \\
\le &\,  \E[f(x_{t}^{k+1})] + (c_{t+1}^{k+1}1 + c_{t+1}^{k+1} \alpha_t^{k+1}\beta_t + 2c_{t+1}^{k+1}L^2(\alpha_t^{k+1})^2\bar{\kappa}^2/m + (\alpha_t^{k+1})^2L^3\bar{\kappa}^2/m)\E[\|x_t^{k+1}-\tilde{x}_k\|^2]\\
&\, - (\alpha_t^{k+1}\underline{\kappa} - c_{t+1}^{k+1}\alpha_t^{k+1}\bar{\kappa}^2/\beta_t- (\alpha_t^{k+1})^2 L\bar{\kappa}^2 - 2c_{t+1}^{k+1}(\alpha_t^{k+1})^2\bar{\kappa}^2 )\E[\|\nabla f(x_t^{k+1})\|^2] \\
= & \, R_t^{k+1} - \Gamma_t^{k+1} \E[\|\nabla f(x_t^{k+1})\|^2],
\end{align*}
which further implies \eqref{lem-4-1-conclusion}.
\end{proof}

\begin{thm}\label{thm4.1}
Suppose assumptions {\bf AS.1, AS.2} and {\bf AS.5} hold. Set $\beta_t=\beta=\frac{L\bar{\kappa}}{T^{1/3}}$, $c_q^{k+1}=c_q=0$. Suppose that there exist two positive constants $\nu, \mu_0 \in (0,1)$ such that
\be\label{thm4.1-mu0}(1-\frac{\nu\bar{\kappa}}{\mu_0\underline{\kappa}}) \underline{\kappa} \geq \mu_0\bar{\kappa}(e-1)+\mu_0m\bar{\kappa}+2\mu_0^2\bar{\kappa}m(e-1)\ee
holds. Set $\alpha_t^{k+1}=\alpha=\frac{\mu_0m}{L\bar{\kappa}T^{2/3}}$, and $q=\lfloor\frac{T}{3\mu_0m}\rfloor$.
\be\label{thm4.1-conclusion}
\E[\|\nabla f(x)\|^2] \le \frac{T^{2/3}L[f(x_0)-f(x_*)]}{qNm\nu}.
\ee
\end{thm}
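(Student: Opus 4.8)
The plan is to convert the one-step estimate \eqref{lem-4-1-conclusion} into a double telescoping sum over the inner index $t$ and the outer index $k$, and then divide by the total count $qN$ of iterates. Two ingredients are required: a uniform positive lower bound $\Gamma_t^{k+1}\ge\gamma$ valid for all $t,k$, and an evaluation of the telescoping endpoints. For the endpoints, $x_0^{k+1}=\tilde{x}_k$ forces the distance term in $R_0^{k+1}$ to vanish, so $R_0^{k+1}=\E[f(\tilde{x}_k)]$, while $c_q=0$ together with $x_q^{k+1}=\tilde{x}_{k+1}$ gives $R_q^{k+1}=\E[f(\tilde{x}_{k+1})]$. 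Summing \eqref{lem-4-1-conclusion} over $t=0,\dots,q-1$ and $k=0,\dots,N-1$ collapses the right-hand side to $f(x_0)-\E[f(\tilde{x}_N)]\le f(x_0)-f(x_*)$, and since the output $x$ is drawn uniformly from the $qN$ iterates, the left-hand side becomes $\gamma\,qN\,\E[\|\nabla f(x)\|^2]$. Matching $\gamma=\nu m/(LT^{2/3})$ against \eqref{thm4.1-conclusion} shows the whole theorem reduces to establishing $\Gamma_t^{k+1}\ge \nu m/(LT^{2/3})$.

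The heart of the argument is therefore controlling the backward recursion defining $c_t^{k+1}$. With the constant choices $\alpha_t^{k+1}=\alpha$ and $\beta_t=\beta$, this recursion has constant coefficients, $c_t=\omega\,c_{t+1}+b$ with $\omega=1+\alpha\beta+2L^2\alpha^2\bar{\kappa}^2/m$ and $b=\alpha^2L^3\bar{\kappa}^2/m$, and boundary value $c_q=0$; its solution is the geometric sum $c_t=b(\omega^{q-t}-1)/(\omega-1)$, which is largest at $t=0$. The key estimate is that the prescribed $\alpha$, $\beta$, and $q=\lfloor T/(3\mu_0 m)\rfloor$ make $q(\omega-1)<1$: indeed $q\alpha\beta\le 1/3$ and $q\cdot 2L^2\alpha^2\bar{\kappa}^2/m\le 2\mu_0/(3T^{1/3})<2/3$ using $\mu_0<1$ and $T\ge 1$. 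Hence $\omega^q\le e^{q(\omega-1)}<e$, and since $b/(\omega-1)\le b/(\alpha\beta)=\mu_0 L/T^{1/3}$, I obtain the uniform bound $c_{t+1}\le c_0<(e-1)\mu_0 L/T^{1/3}$ for every $t$.

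Finally, substituting this bound together with the explicit values of $\alpha$ and $\beta$ into $\Gamma_t^{k+1}=\alpha\underline{\kappa}-c_{t+1}\alpha\bar{\kappa}^2/\beta-\alpha^2L\bar{\kappa}^2-2c_{t+1}\alpha^2\bar{\kappa}^2$ and factoring out $\mu_0 m/(LT^{2/3})$ leaves the bracketed factor
\[
\frac{\underline{\kappa}}{\bar{\kappa}}-(e-1)\mu_0-\frac{\mu_0 m}{T^{2/3}}-\frac{2(e-1)\mu_0^2 m}{T}.
\]
Using $T\ge 1$ to enlarge the last two subtracted terms to $\mu_0 m$ and $2(e-1)\mu_0^2 m$ makes this factor no smaller than $\underline{\kappa}/\bar{\kappa}-\mu_0(e-1)-\mu_0 m-2\mu_0^2 m(e-1)$, which is exactly $\ge \nu/\mu_0$ by hypothesis \eqref{thm4.1-mu0} (divide that inequality through by $\bar{\kappa}$ and rearrange). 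This gives $\Gamma_t^{k+1}\ge \nu m/(LT^{2/3})$ and closes the argument. I expect the main obstacle to lie in the second paragraph: getting the algebra of the $c_t$ recursion right and verifying that the parameter choices force $\omega^q<e$, because all the $(e-1)$ constants appearing in the standing hypothesis \eqref{thm4.1-mu0} originate precisely from this geometric-growth estimate, so a loose bound there would fail to reach the required threshold $\nu/\mu_0$.
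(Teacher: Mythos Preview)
Your proposal is correct and follows essentially the same approach as the paper: the paper also solves the linear recursion for $c_t$ explicitly, bounds $(1+\theta)^q\le e$ via $q\theta\le 1$ (your $q(\omega-1)<1$), obtains $c_0\le \mu_0 L(e-1)/T^{1/3}$, deduces $\min_t\Gamma_t^{k+1}\ge \nu m/(LT^{2/3})$, and then telescopes \eqref{lem-4-1-conclusion} over $t$ and $k$ exactly as you describe. Your write-up in fact supplies more of the intermediate algebra (the factoring out of $\mu_0 m/(LT^{2/3})$ and the use of $T\ge 1$ to reach hypothesis \eqref{thm4.1-mu0}) than the paper does.
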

\begin{proof}
Denote $\theta=\alpha\beta+2\alpha^2 L^2\bar{\kappa}^2/m$. It then follows that
$\theta = \mu_0m/T + 2\mu_0^2m/T^{4/3} \le 3\mu_0m/T$, and $(1+\theta)^q\leq e$, where $e$ is the Euler's number.
Because $c_q=0$, for any $k\geq 0$, we have
\[
c_0:=c_0^{k+1}=\frac{\alpha^2 L^3\bar{\kappa}^2}{m}\cdot\frac{(1+\theta)^q-1}{\theta} = \frac{\mu_0^2mL((1+\theta)^q-1)}{ T^{4/3}\theta} = \frac{\mu_0^2L((1+\theta)^q-1)}{\mu_0T^{1/3}+2\mu_0^2}\le \frac{\mu_0L((1+\theta)^q-1)}{ T^{1/3}}\le \frac{\mu_0L(e-1)}{T^{1/3}}.
\]
Therefore, it follows that
\[\min_t\Gamma_t^{k+1}\ge \frac{\nu m}{LT^{2/3}}.\]
As a result, we have
\[
\sum_{t=0}^{q-1} \E[\|\nabla f(x_t^{k+1})\|^2] \le \frac{R_0^{k+1}-R_q^{k+1}}{\min_t\Gamma_t^{k+1} } = \frac{\E[f(\tilde{x}_{k})- f(\tilde{x}_{k+1})]}{\min_t\Gamma_t^{k+1} },
\]
which further yields that
\[
\E[\|\nabla f(x)\|^2] = \frac{1}{qN}\sum_{k=0}^{N-1}\sum_{t=0}^{q-1} \E[\|\nabla f(x_t^{k+1})\|^2] \le\frac{f(x_0)-f(x_*)}{qN\min_t\Gamma_t^{k+1}} \le \frac{T^{2/3}L[f(x_0)-f(x_*)]}{qNm\nu }.
\]
\end{proof}

\begin{cor}
Under the same conditions as Theorem \ref{thm4.1}, to achieve $\E[\|\nabla f(x)\|^2] \leq \epsilon$, the total number of component gradient evaluations required in Algorithm \ref{L-BFGS-VR} is
$
O\left({T^{2/3}}/{\epsilon}\right).
$
\end{cor}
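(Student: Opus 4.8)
The plan is to convert the accuracy bound of Theorem~\ref{thm4.1} into a requirement on the number of outer iterations $N$, and then multiply by the per-outer-iteration cost measured in component gradient evaluations.

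First I would count the cost of a single pass of the outer loop. Line~3 computes the full gradient $\nabla f(\tilde x_k)$, which costs exactly $T$ component gradient evaluations. The inner loop runs $q$ times, and each inner iteration evaluates the two mini-batch gradients $\nabla f_{\C{K}}(x_t^{k+1})$ and $\nabla f_{\C{K}}(\tilde x_k)$ appearing in $g_t^{k+1}$, together with whatever mini-batch gradient differences are needed to form the curvature pair $(s_\cdot,\bar y_\cdot)$ used in Procedure~\ref{comp-mod-LBFGS}; in every case this is $O(m)$ evaluations per inner step. Hence one outer iteration costs $T + O(mq)$ evaluations. Because $q=\lfloor T/(3\mu_0 m)\rfloor$, we have $mq\le T/(3\mu_0)$, so the inner-loop cost is $O(T)$ and the total per outer iteration is $O(T)$.

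Next I would determine how large $N$ must be. From Theorem~\ref{thm4.1}, to guarantee $\E[\|\nabla f(x)\|^2]\le\epsilon$ it suffices that
\be
N \;\ge\; \frac{T^{2/3}L\,[f(x_0)-f(x_*)]}{\nu\,\epsilon\,(qm)}.
\ee
Here I would use the two-sided estimate $qm = m\lfloor T/(3\mu_0 m)\rfloor \ge T/(3\mu_0) - m$, which is $\Theta(T)$ provided $m$ is small relative to $T$ (e.g.\ $m$ constant); since $\mu_0\in(0,1)$ this gives $qm > T/3$. Substituting $qm=\Theta(T)$ into the display shows that $N = O\!\left(T^{-1/3}/\epsilon\right)$ outer iterations suffice, after rounding up to the nearest positive integer.

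Finally, multiplying the per-iteration cost $O(T)$ by the required $N$ yields a total of $O(NT)=O(T^{2/3}/\epsilon)$ component gradient evaluations, as claimed. The main obstacle is purely the bookkeeping: one must verify that $qm$ is genuinely $\Theta(T)$ despite the floor in the definition of $q$ (so the lower bound on $N$ is not vacuous), confirm that the gradient evaluations hidden in the L-BFGS curvature updates of Procedure~\ref{comp-mod-LBFGS} remain $O(m)$ per inner step and hence do not change the order, and record that the bound is stated in the natural regime $\epsilon=O(T^{-1/3})$, in which the required $N$ is at least one and the full-gradient cost $NT$ indeed matches $O(T^{2/3}/\epsilon)$.
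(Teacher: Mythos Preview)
Your proposal is correct and follows essentially the same route as the paper: determine from Theorem~\ref{thm4.1} that $N=O(T^{-1/3}/\epsilon)$ outer iterations suffice (using $qm=\Theta(T)$), observe that each outer iteration costs $T+O(qm)=O(T)$ component gradient evaluations, and multiply. Your additional bookkeeping---the lower bound on $qm$ despite the floor, the $O(m)$ cost of the curvature pairs in Procedure~\ref{comp-mod-LBFGS}, and the remark on the regime $\epsilon=O(T^{-1/3})$---is more careful than the paper's brief argument but does not change the strategy.
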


\begin{proof}
From Theorem \ref{thm4.1}, it follows that to obtain an $\epsilon$-solution, the outer iteration number $N$ of Algorithm \ref{L-BFGS-VR} should be in the order of
$
O(\frac{T^{2/3}}{qm\epsilon}) = O(\frac{T^{-1/3}}{\epsilon}),
$
which is due to the fact that $qm=O(T)$. As a result, the total number of component gradient evaluations is $(T+qm)N$, which is $O(T^{2/3}/\epsilon)$.
\end{proof}
}

\renewcommand{\thefigure}{5.\arabic{figure}}

\setcounter{figure}{0}

\section{Numerical Experiments}\label{sec:num}
{
In this section, we empirically study the performance of the proposed SdLBFGS and SdLBFGS-VR methods. We compare SdLBFGS with SGD with $g_k$ given by \eqref{G-k} using a diminishing step size $\alpha_k=\beta/k$ in both methods, for solving the following nonconvex support vector machine (SVM) problem with a sigmoid loss function, which has been considered in \cite{gl13,Mason-nips-1999}:
\be\label{prob_SVM}
\min_{x\in\R^n}\quad f(x):=\E_{u,v}[1-\mathrm{tanh}(v\langle x,u\rangle)] + \lambda \|x\|_2^2,
\ee
where $\lambda>0$ is a regularization parameter, $u\in\R^n$ denotes the feature vector, and $v\in\{-1,1\}$ refers to the corresponding label. In our experiments, $\lambda$ was chosen as $10^{-4}$.
We also compare SdLBFGS-VR with SVRG \cite{SVR-nonconvex}, using a constant step size in both methods, for solving
\be\label{prob_SVM-FS}
\min_{x\in\R^n} \quad \frac{1}{T}\sum_{i=1}^T f_i(x) + \lambda\|x\|^2,
\ee
where $f_i(x) = 1-\mathrm{tanh}(v_i\langle x,u_i\rangle)$, $i=1,\ldots,T$. In all of our tests, we used the same mini-batch size $m$ at every iteration.} All the algorithms were implemented in Matlab R2013a on a PC with a 2.60 GHz Intel microprocessor and 8GB of memory.

\subsection{Numerical results for SdLBFGS on synthetic data}\label{sec:synthetic}

In this subsection, we report numerical results for SdLBFGS and SGD for solving \eqref{prob_SVM} on synthetic data for problems of dimension $n=500$.
We set the initial point for both methods to $x_1=5*\bar{x}_1$, where $\bar{x}_1$ was drawn from the uniform distribution over $[0,1]^n$. We generated training and testing points $(u,v)$ in the following manner. We first generated a sparse vector $u$ with 5\% nonzero components following the uniform distribution on $[0,1]^n$, and then set $v=\mbox{sign}(\langle \bar{x},u \rangle)$ for some $\bar{x}\in\R^n$ drawn from the uniform distribution on $[-1,1]^n$. Every time we computed a stochastic gradient of the sigmoid loss function in \eqref{prob_SVM}, we accessed $m$ of these data points $(u,v)$ without replacement. {Drawing data points without replacement is a common practice in testing the performance of stochastic algorithms, although in this case the random samples are not necessarily independent.}

In Figure \ref{fig1} we compare the performance of SGD and SdLBFGS with various memory sizes $p$. The batch size was set to $m=100$ and the stepsize to both $\alpha_k = 10/k$ and $\alpha_k = 20/k$ for SGD and $10/k$ for SdLBFGS. In the left figure we plot the squared norm of the gradient (SNG) versus the number of iterations, up to a total of $1000$. The SNG was computed using $N=5000$ randomly generated testing points $(u_i,v_i), i=1,\ldots,N$ as:
\be\label{sng}\mbox{ SNG } = \left\|\frac{1}{N}\sum_{i=1}^N \nabla_x F(\tilde{x};u_i,v_i) + 2\lambda \tilde{x}\right\|^2, \ee
where $\tilde{x}$ is the output of the algorithm and $F(x;u,v) = 1-\mathrm{tanh}(v\langle x,u\rangle)$.
In the right figure we plot the percentage of correctly classified testing data. From Figure \ref{fig1} we can see that increasing the memory size improves the performance of SdLBFGS. When the memory size $p=0$, we have $H_k=\gamma_k^{-1}I$ and SdLBFGS reduces to SGD with an adaptive stepsize. SdLBFGS with memory size $p=1, 3$ oscillates quite a lot. The variants of SdLBFGS with memory size $p=5$, $10$, $20$ all oscillate less and perform quite similarly, and they all significantly outperform SdLBFGS with $p=0$ and $p=1$.


{In Figure \ref{fig-delta}, we report the performance of SdLBFGS with different $\delta$ used in \eqref{gama-k}. From Figure \ref{fig-delta} we see that SdLBFGS performs best with small $\delta$ such as $\delta = 0.01, 0.1$ and $1$.
}


In Figure \ref{fig1-batch} we report the effect of the batch size $m$ on the performance of SGD and SdLBFGS with memory size $p=20$.
For SdLBFGS, the left figure shows that $m=500$ gives the best performance among the three choices 50, 100 and 500, tested, with respect to the total number of iterations taken. This is because a larger batch size leads to gradient estimation with lower variance. The right figure shows that if the total number of $\SFO$-calls is fixed, then because of the tradeoff between the number of iterations and the batch size, i.e., because the number of iterations is proportional to the reciprocal of the batch size, the SdLBFGS variant corresponding to $m=100$ slightly outperforms the $m=500$ variant.


In Figure \ref{fig4-cls}, we report the percentage of correctly classified data for $5000$ randomly generated testing points. The results are consistent with the one shown in the left figure of Figure \ref{fig1-batch}, i.e., the ones with a lower squared norm of the gradient give a higher percentage of correctly classified data.


Moreover, we also counted the number of steps taken by SdLBFGS in which $s_{k-1}^\top y_{k-1} < 0$. We set the total number of iterations to 1000 and tested the effect of the memory size and batch size of SdLBFGS on the number of such steps. For fixed batch size $m=50$, the average numbers of such steps over 10 runs of SdLBFGS were respectively equal to $(178,50,36,42,15)$ when the memory sizes were $(1,3,5,10,20)$. For fixed memory size $p=20$, the average numbers of such steps over 10 runs of SdLBFGS were respectively equal to $(15,6,1)$ when the batch sizes are $(50,100,500)$. Therefore, the number of such steps roughly decreases as the memory size $p$ and the batch size $m$ increase. This is to be expected because as $p$ increases, there is less negative effect caused by ``limited-memory''; and as $m$ increases, the gradient estimation has lower variance.

\subsection{Numerical results for SdLBFGS on the RCV1 dataset}\label{sec:rcv1}

In this subsection, we compare SGD and SdLBFGS for solving \eqref{prob_SVM} on a real dataset: RCV1 \cite{LYRL2004}, which is a collection of newswire articles produced by Reuters in 1996-1997. In our tests, we used a subset \footnote{downloaded from http://www.cad.zju.edu.cn/home/dengcai/Data/TextData.html} of RCV1 used in \cite{CH2011} that contains 9625 articles with 29992 distinct words. The articles are classified into four categories ``C15'', ``ECAT'', ``GCAT'' and ``MCAT'', each with 2022, 2064, 2901 and 2638 articles respectively. We consider the binary classification problem of predicting whether or not an article is in the second and fourth category, i.e., the entry of each label vector is 1 if a given article appears in category ``MCAT'' or ``ECAT'', and -1 otherwise. We used 60\% of the articles (5776) as training data and the remaining 40\% (3849) as testing data.

In Figure \ref{fig2-RCV}, we compare SdLBFGS with various memory sizes and SGD on the RCV1 dataset. For SGD and SdLBFGS, we use the stepsize: $\alpha_k=10/k$ and the batch size $m=100$. We also used a second stepsize of $20/k$ for SGD. Note that the SNG computed via \eqref{sng} uses $N=3849$ testing data. The left figure shows that for the RCV1 data set, increasing the memory size improves the performance of SdLBFGS. The performance of SdLBFGS with memory sizes $p=10,20$ was similar, although for $p=10$ it was slightly better. The right figure also shows that larger memory sizes can achieve higher correct classification percentages.


{In Figure \ref{RCV-fig-delta}, we report the performance of SdLBFGS on RCV1 dataset with different $\delta$ used in \eqref{gama-k}. Similar to Figure \ref{fig-delta}, we see from Figure \ref{RCV-fig-delta} that SdLBFGS works best for small $\delta$ such as $\delta = 0.01, 0.1$ and $1$.
}


Figure \ref{fig3-RCV} compares SGD and SdLBFGS with different batch sizes. The stepsize of SGD and SdLBFGS was set to $\alpha_k=20/k$ and $\alpha_k=10/k$, respectively. The memory size of SdLBFGS was chosen as $p=10$. We tested SGD with batch size $m=1,50,100$, and SdLBFGS with batch size $m=1,50,75,100$. From Figure \ref{fig3-RCV} we can see that SGD performs worse than SdLBFGS. For SdLBFGS, from Figure \ref{fig3-RCV} (a) we observe that larger batch sizes give better results in terms of SNG. If we fix the total number of $\SFO$-calls to $2*10^4$, SdLBFGS with $m=1$ performs the worst among the different batch sizes and exhibits dramatic oscillation. The performance gets much better when the batch size becomes larger. In this set of tests, the performance with $m=50,75$ was slightly better than $m=100$. One possible reason is that for the same number of $\SFO$-calls, a smaller batch size leads to larger number of iterations and thus gives better results.


In Figure \ref{fig4-RCV}, we report the percentage of correctly classified data points for both SGD and SdLBFGS with different batch sizes. These results are consistent with the ones in Figure \ref{fig3-RCV}. Roughly speaking, the algorithm that gives a lower SNG leads to a higher percentage of correctly classified data points.


We also counted the number of steps taken by SdLBFGS in which $s_{k-1}^\top y_{k-1} < 0$. We again set the total number of iterations of SdLBFGS to 1000. For fixed batch size $m=50$, the average numbers of such steps over 10 runs of SdLBFGS were respectively equal to $(3,5,8,6,1)$ when the memory sizes were $(1,3,5,10,20)$. For fixed memory size $p=10$, the average numbers of such steps over 10 runs of SdLBFGS were respectively equal to $(308,6,2)$ when the batch sizes were $(1,50,75)$. This is qualitatively similar to our observations in Section \ref{sec:synthetic}, except that for the fixed batch size $m=50$, a fewer number of such steps were required by the SdLBFGS variants with memory sizes $p=1$ and $p=3$ compared with $p=5$ and $p=10$.

{
\subsection{Numerical results for SdLBFGS-VR on the RCV1 dataset}

In this subsection, we compare SdLBFGS-VR, SVRG \cite{SVR-nonconvex} and SdLBFGS for solving \eqref{prob_SVM-FS} on the RCV1 dataset with $T=5776$.
We here follow the same strategy as suggested in \cite{SVR-nonconvex} to initialize SdLBFGS-VR and SVRG. In particular, we run SGD with step size $1/t$ and batch size $20$ for $T$ iterations to get the initial point for SdLBFGS-VR and SVRG, where $t$ denotes the iteration counter. In all the tests, we use a constant step size $\alpha$ for both methods. The comparison results are shown in Figures \ref{RCV_diff-p}-\ref{RCV-VR-diff-beta}. In these figures, the ``SFO-calls'' in the $x$-axis includes both the number of stochastic gradients and $T$ gradient evaluations of the individual component functions when computing the full gradient $\nabla f(\tilde{x}_k)$ in each outer loop.

Figure \ref{RCV_diff-p} compares the performance of SdLBFGS-VR with different memory size $p$. It shows that the limited-memory BFGS improves performance, even when $p=1$. Moreover, larger memory size usually provides better performance, but the difference is not very significant.


Figure \ref{RCV-VR-diff-m} compares the performance of SdLBFGS-VR with different batch sizes $m$ and shows that SdLBFGS-VR is not very sensitive to $m$. In these tests, we always set $q=\lfloor T/m\rfloor$.


The impact of step size on SdLBFGS-VR and SVRG is shown in Figure \ref{RCV_diff-beta2} for three step sizes: $\alpha=0.1, 0.01$ and $0.001$. Clearly, for the same step size, SdLBFGS-VR gives better result than SVRG. From our numerical tests, we also observed that neither SdLBFGS-VR nor SVRG is stable when $\alpha\geq 1$.


In Figure \ref{RCV-VR-diff-beta}, we report the performance of SdLBFGS-VR with different constant step sizes $\alpha$, for $\alpha=0.1,0.01,0.001$ and SdLBFGS with different diminishing step sizes $\beta/k$ for $\beta = 10,1,0.1$, since SdLBFGS needs a diminishing step size to guarantee convergence. We see there that SdLBFGS-VR usually performs better than SdLBFGS. The performance of SdLBFGS with $\beta=10$ is in fact already very good, but still inferior to SdLBFGS-VR. This indicates that the variance reduction technique is indeed helpful.


}

{
\subsection{Numerical results of SdLBFGS-VR on MNIST dataset}\label{sec:mnist}

In this section, we report the numerical results of SdLBFGS-VR for solving a multiclass classification problem using neural networks on a standard testing data set MNIST\footnote{http://yann.lecun.com/exdb/mnist/}. All the experimental settings are the same as in \cite{SVR-nonconvex}. In particular, $T=60000$. The numerical results are reported in Figures \ref{MNIST_diff-p}-\ref{MNIST-VR-diff-beta}, and their purposes are the same as Figures \ref{RCV_diff-p}-\ref{RCV-VR-diff-beta}. From these figures, we have similar observations as those from Figures \ref{RCV_diff-p}-\ref{RCV-VR-diff-beta}.
Note that in Figures \ref{MNIST_diff-p}-\ref{MNIST-VR-diff-beta}, the ``SFO-calls'' in the $x$-axis again includes both the number of stochastic gradients and $T$ gradient evaluations of the individual component functions when computing the full gradient $\nabla f(\tilde{x}_k)$ in each outer loop.
}

%
%

%

\section{Conclusions}\label{sec:conclusions}

In this paper we proposed a general framework for stochastic quasi-Newton methods for nonconvex stochastic optimization. Global convergence, iteration complexity, and $\SFO$-calls complexity were analyzed under different conditions on the step size and the output of the algorithm. Specifically, a stochastic damped limited memory BFGS method was proposed, which falls under the proposed framework and does not generate $H_k$ explicitly. The damping technique was used to preserve the positive definiteness of $H_k$, without requiring the original problem to be convex. A variance reduced stochastic L-BFGS method was also proposed for solving the empirical risk minimization problem. Encouraging numerical results were reported for solving nonconvex classification problems using SVM and neural networks.

\section*{Acknowledgement}

The authors are grateful to two anonymous referees for their insightful comments and constructive suggestions that have improved the presentation of this paper greatly. The authors also thank Conghui Tan for helping conduct the numerical tests in Section \ref{sec:mnist}.


\bibliographystyle{plain}

\bibliography{All}

\newpage

\begin{figure}[H]
\centering{\vspace{-8mm}
\includegraphics[scale=0.4]{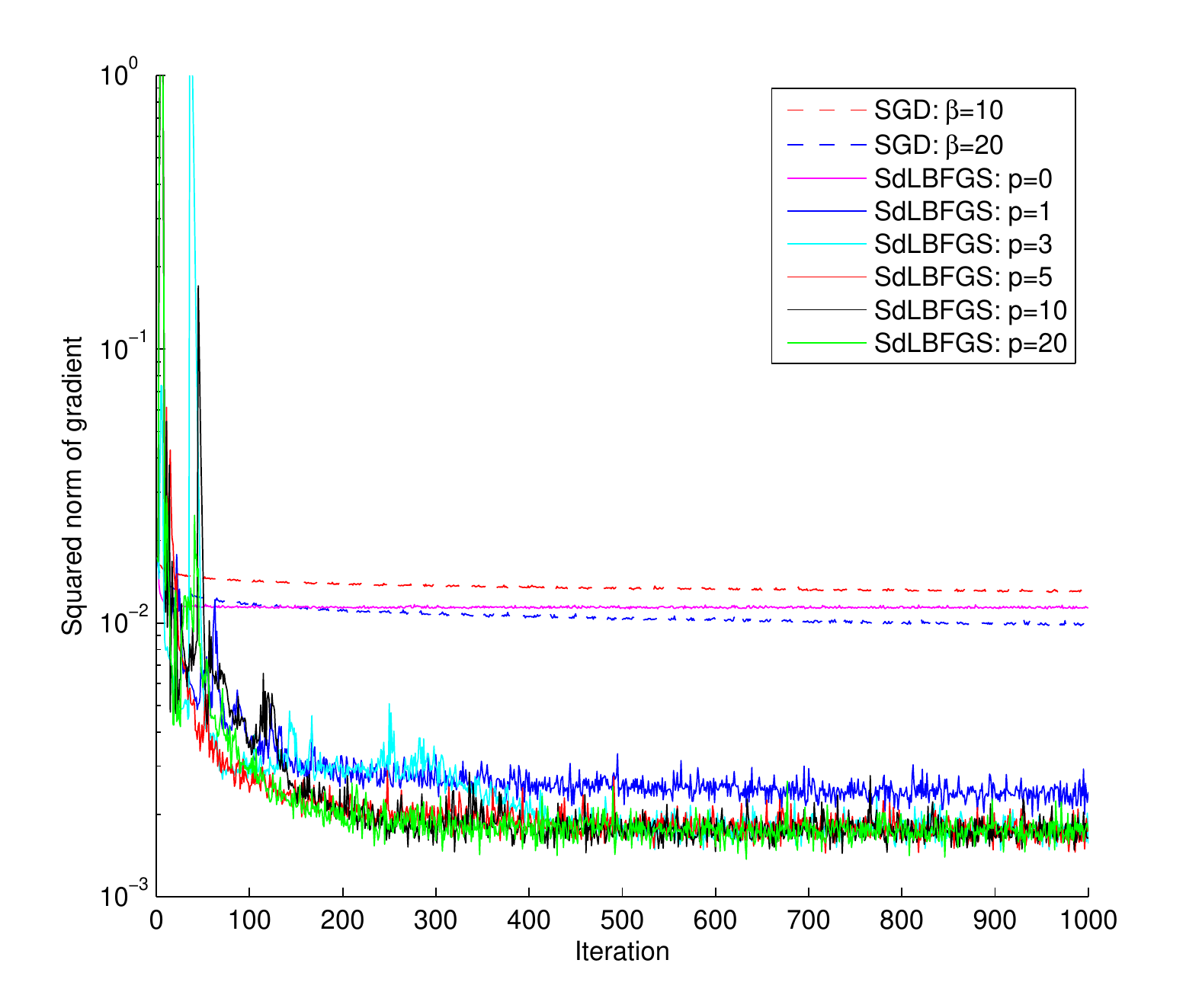}
 \includegraphics[scale=0.4]{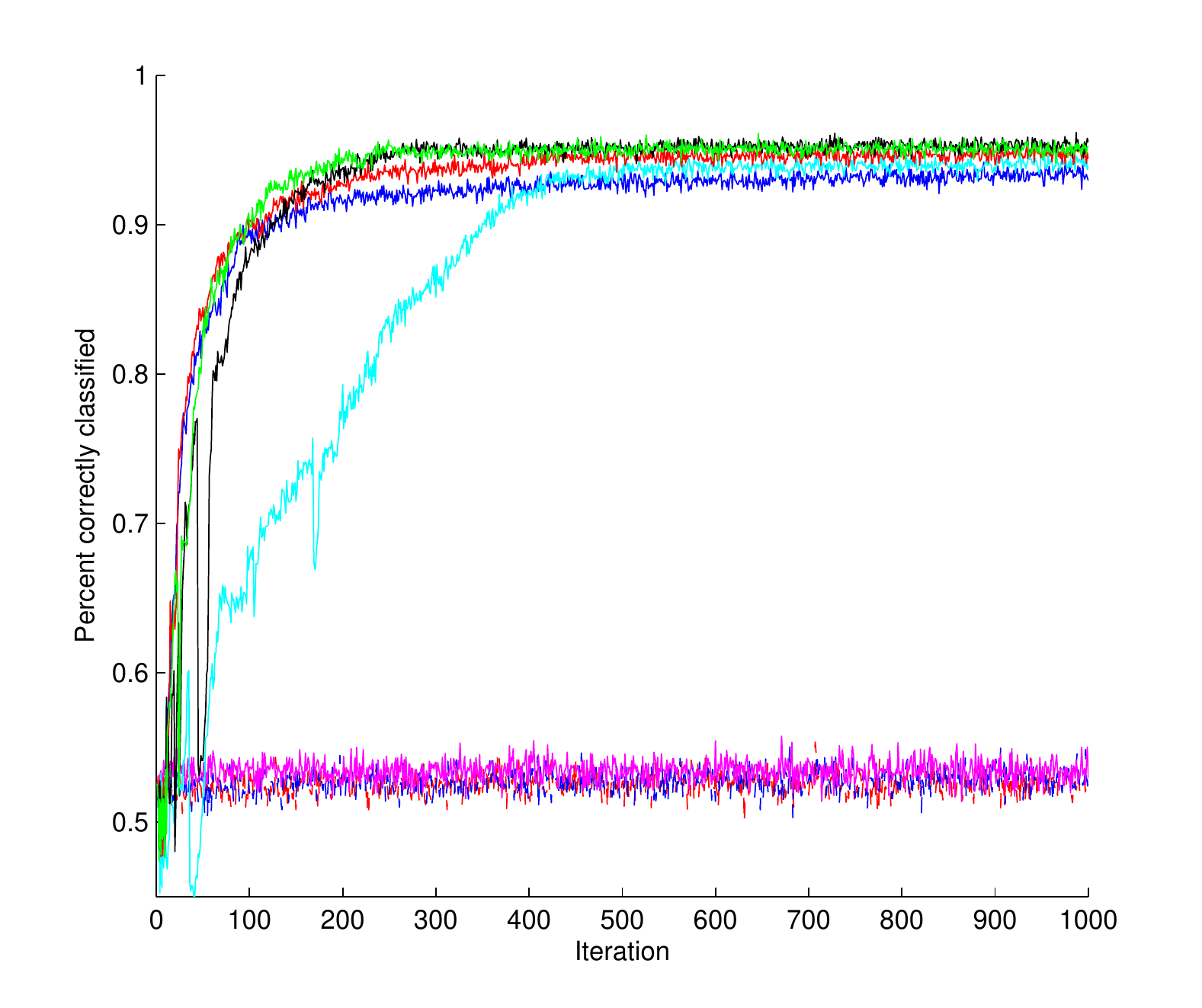}
}
\caption{Comparison on a synthetic data set of SGD and SdLBFGS variants with different memory size $p$, with respect to the squared norm of the gradient and the percent of correctly classified data, respectively. A stepsize of $\alpha_k = 10/k$ and batch size of $m=100$ was used for all SdLBFGS variants. Step sizes of $10/k$ and $20/k$ were used for SGD.}
\label{fig1}
\end{figure}

\begin{figure}[H]
\centering{\vspace{-8mm}
 \includegraphics[scale=0.4]{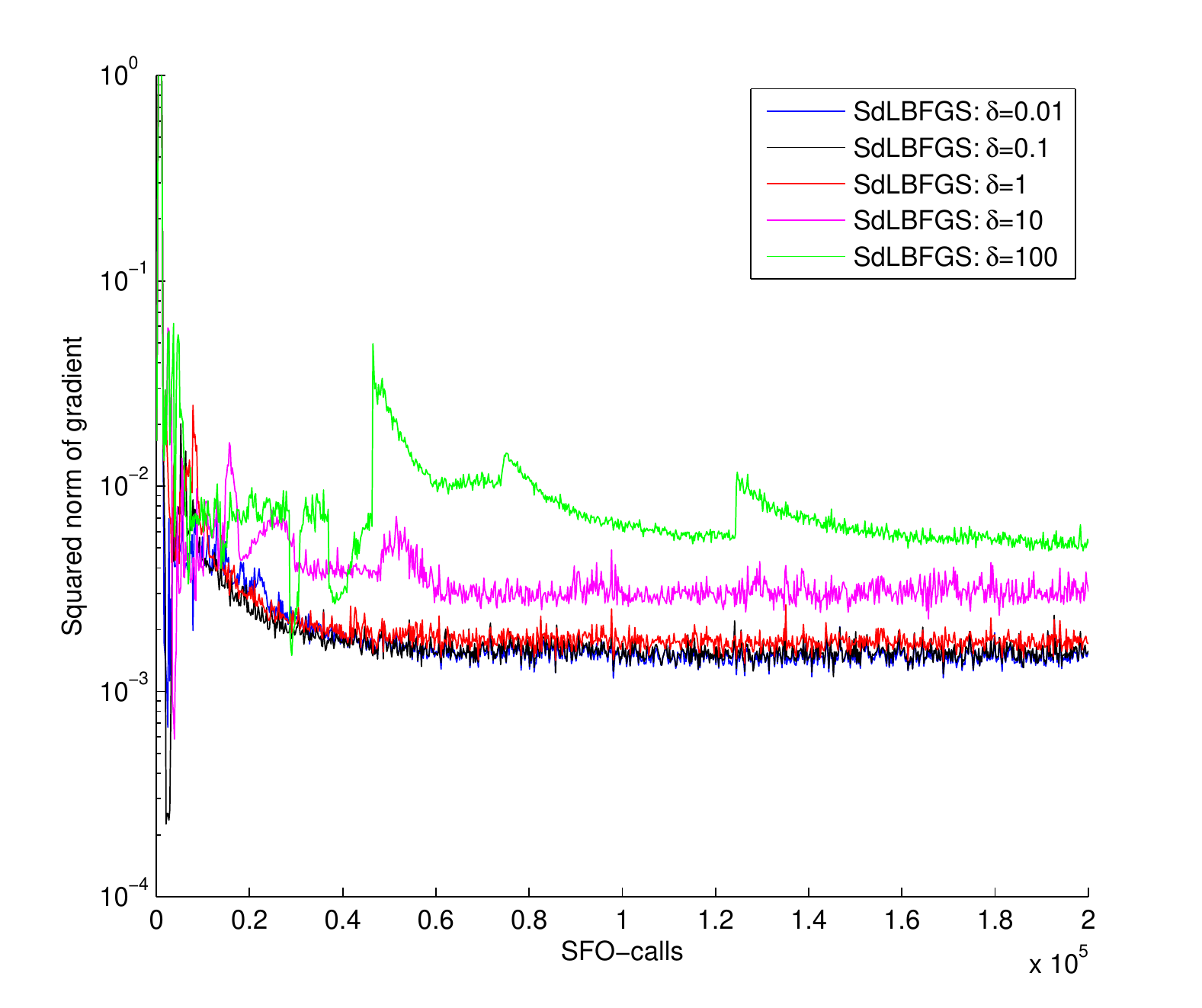}
\includegraphics[scale=0.4]{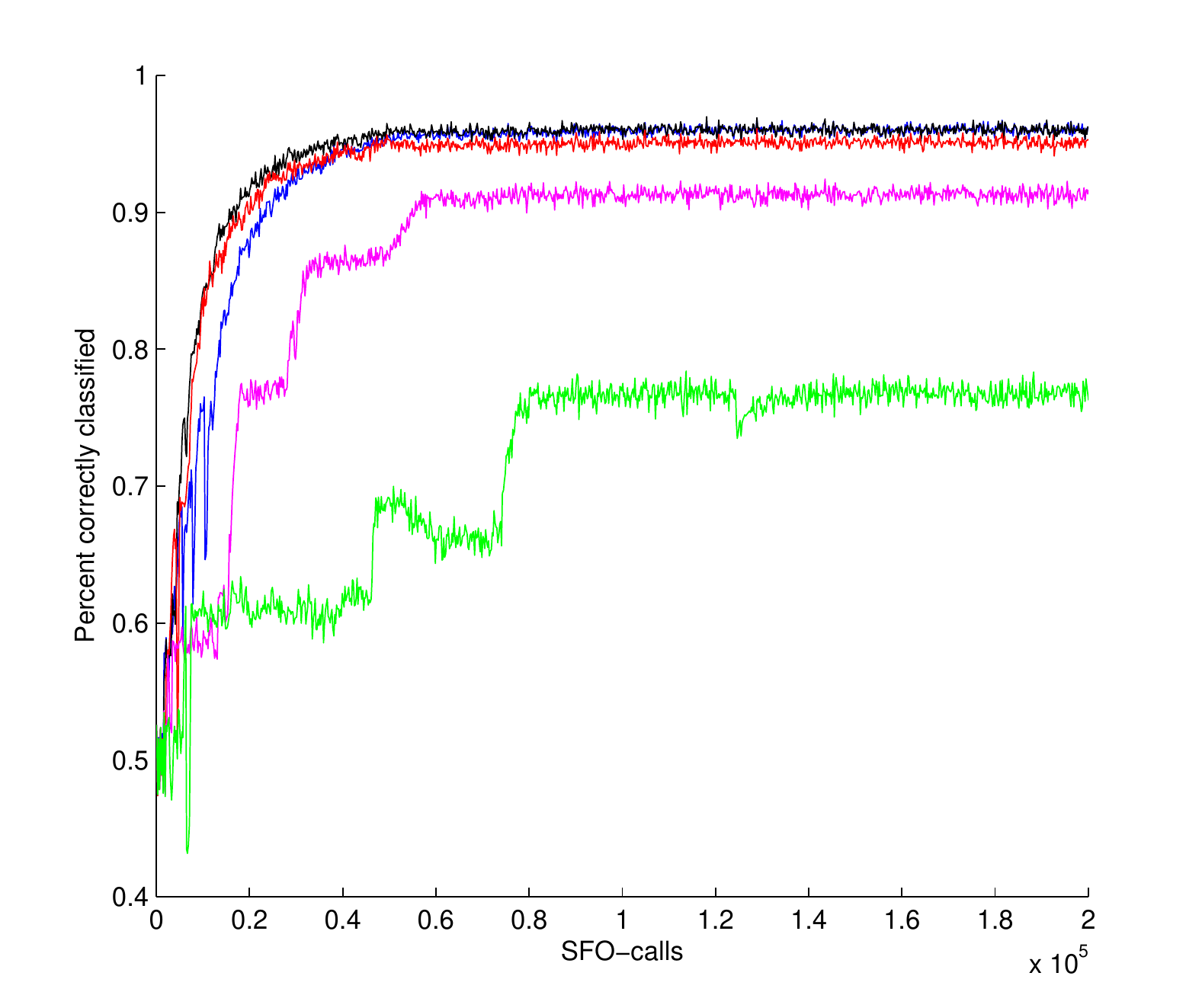}
}
\caption{Comparison on a synthetic data set of SdLBFGS variants with different initial Hessian approximations, by varying the value of $\delta$, with respect to the squared norm of the gradient and the percent of correctly classified data, respectively. A step size of $\alpha_k = 10/k$, memory size of $p=20$ and batch size of $m=100$ was used for all SdLBFGS variants.}
\label{fig-delta}
\end{figure}

\begin{figure}[H]
\centering{\vspace{-8mm}
\includegraphics[scale=0.4]{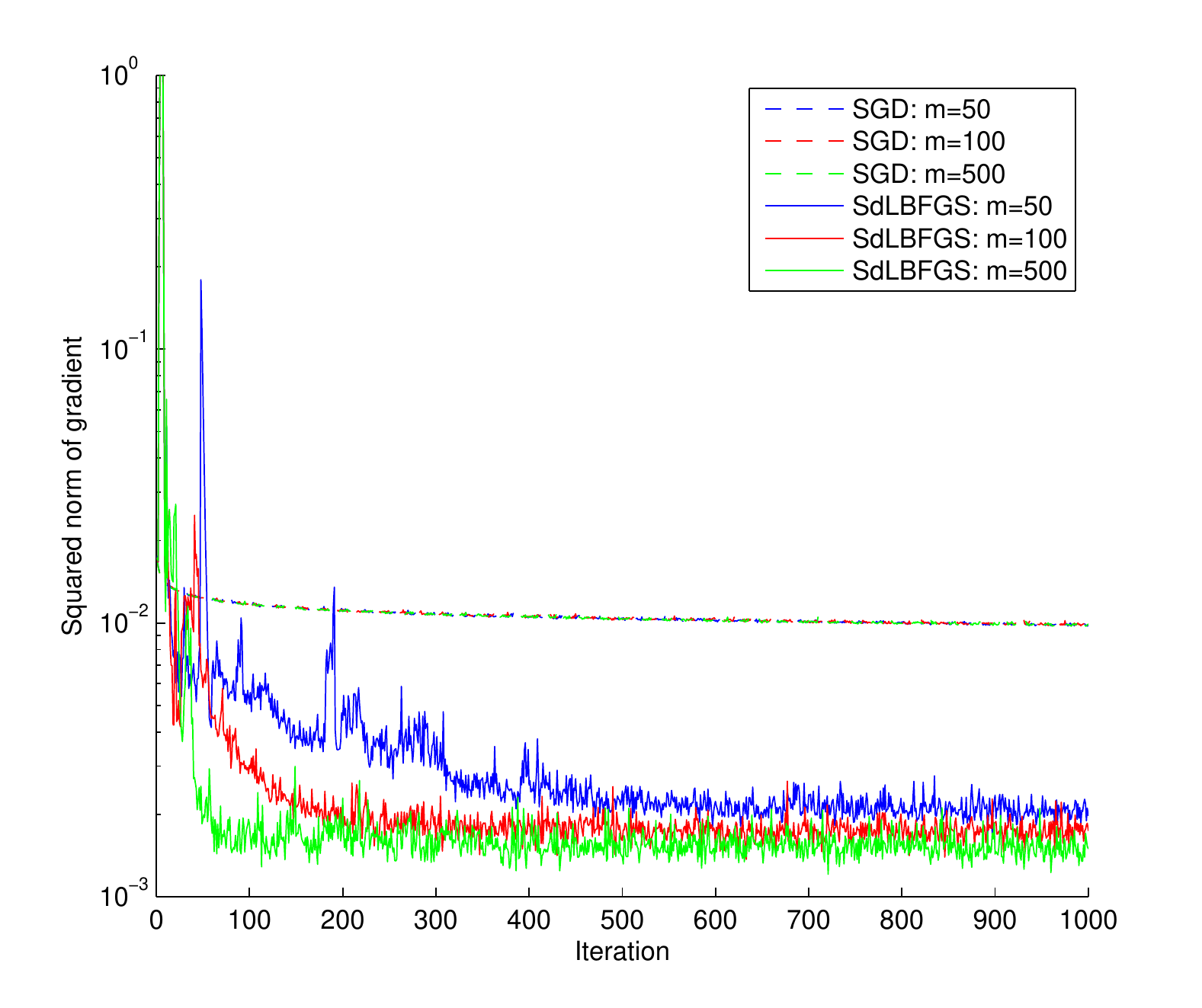}
 \includegraphics[scale=0.4]{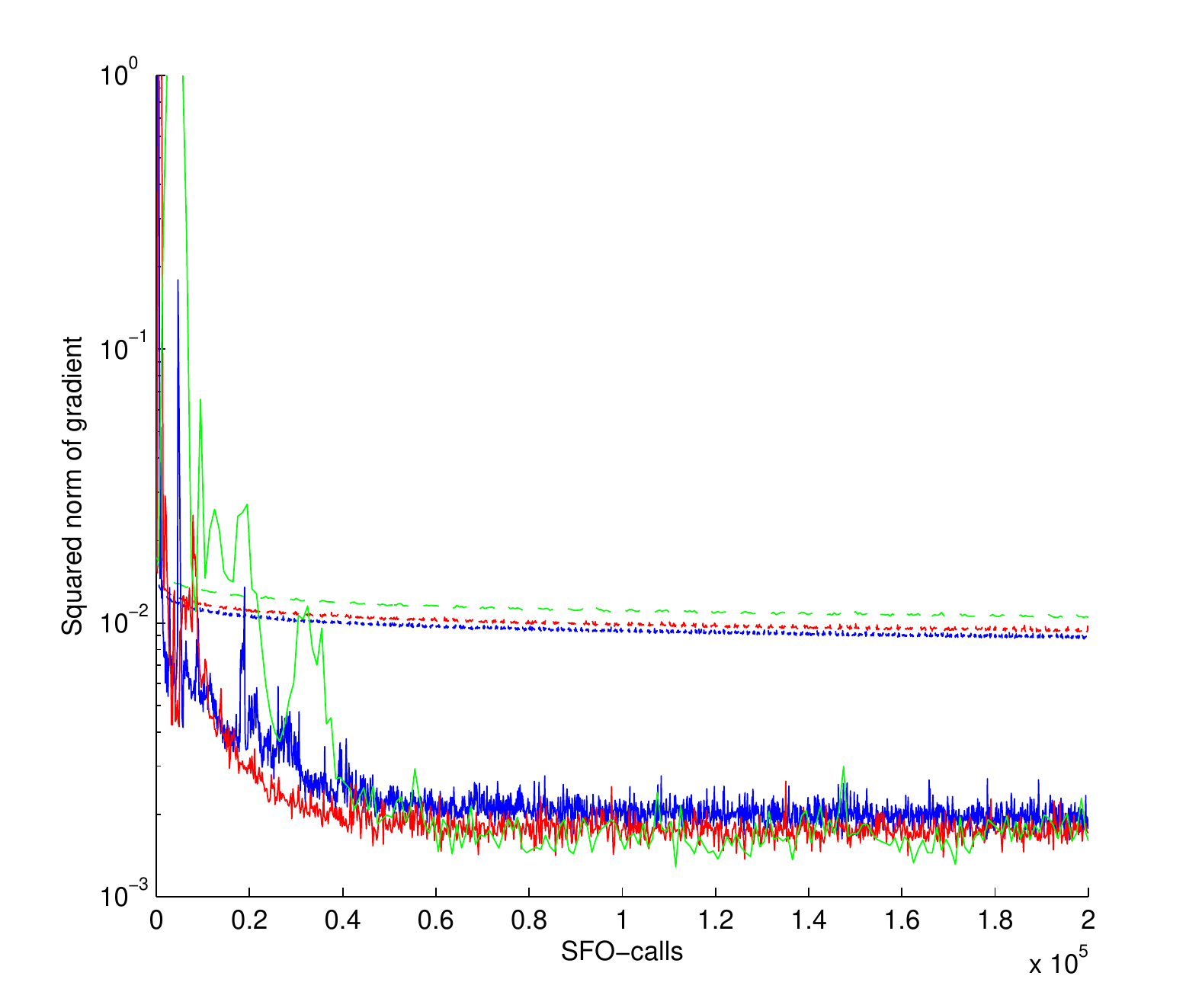}
}
\caption{Comparison of SGD and SdLBFGS variants with different batch size $m$ on a synthetic data set. The memory size of SdLBFGS was $p=20$ and the step size of SdLBFGS was $\alpha_k=10/k$, while the step size of SGD was $\alpha_k=20/k$.}
\label{fig1-batch}
\end{figure}

\begin{figure}[H]
\centering{\vspace{-8mm}
 \includegraphics[scale=0.4]{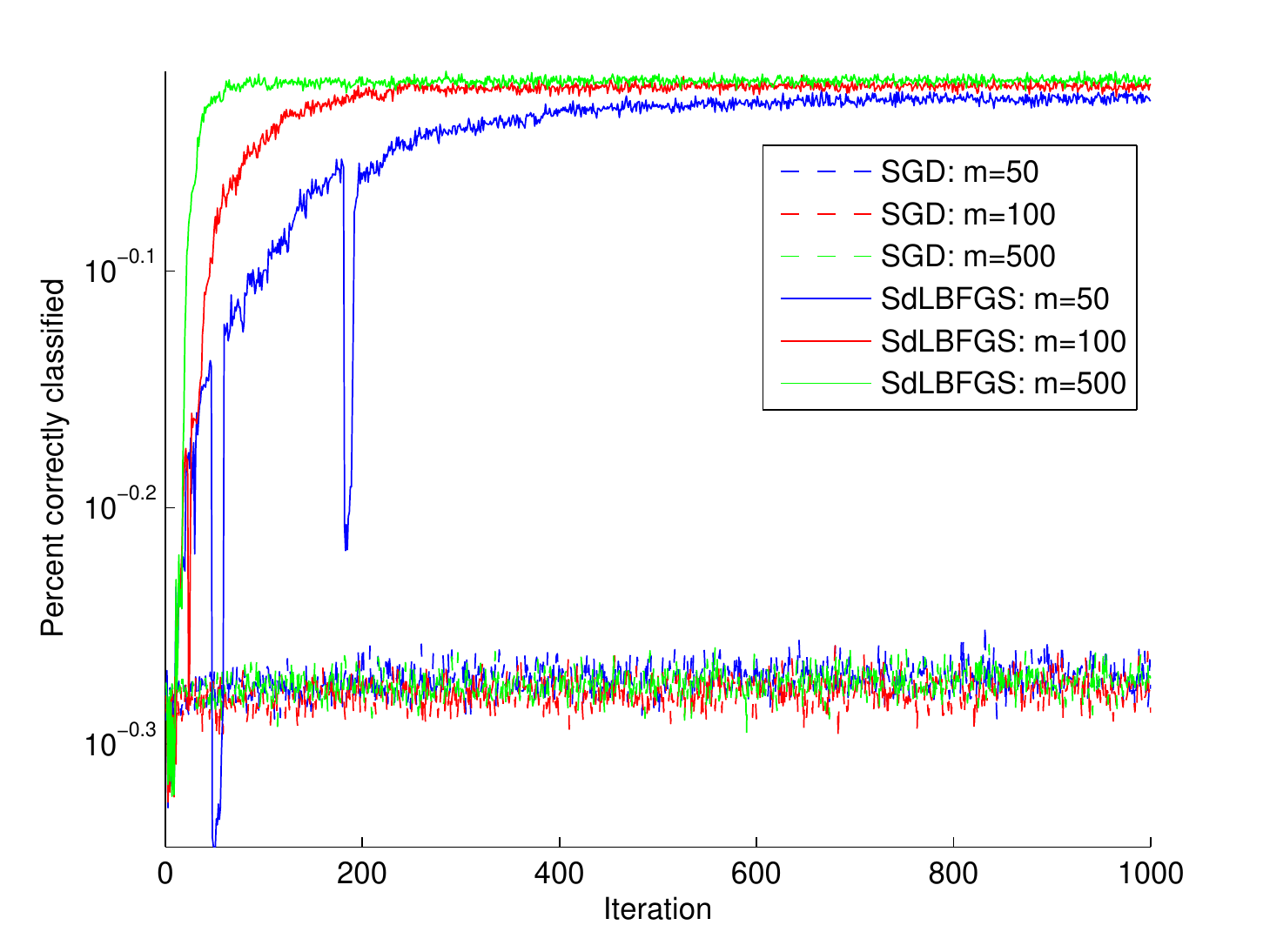}
}
\caption{Comparison of correct classification percentage on a synthetic dataset by SGD and SdLBFGS with different batch sizes. The memory size of SdLBFGS was $p=20$ and the stepsize of SdLBFGS was $\alpha_k = 10/k$. The step size of SGD was $\alpha_k=20/k$.}
\label{fig4-cls}
\end{figure}

\begin{figure}[H]
\centering{\vspace{-8mm}
 \includegraphics[scale=0.4]{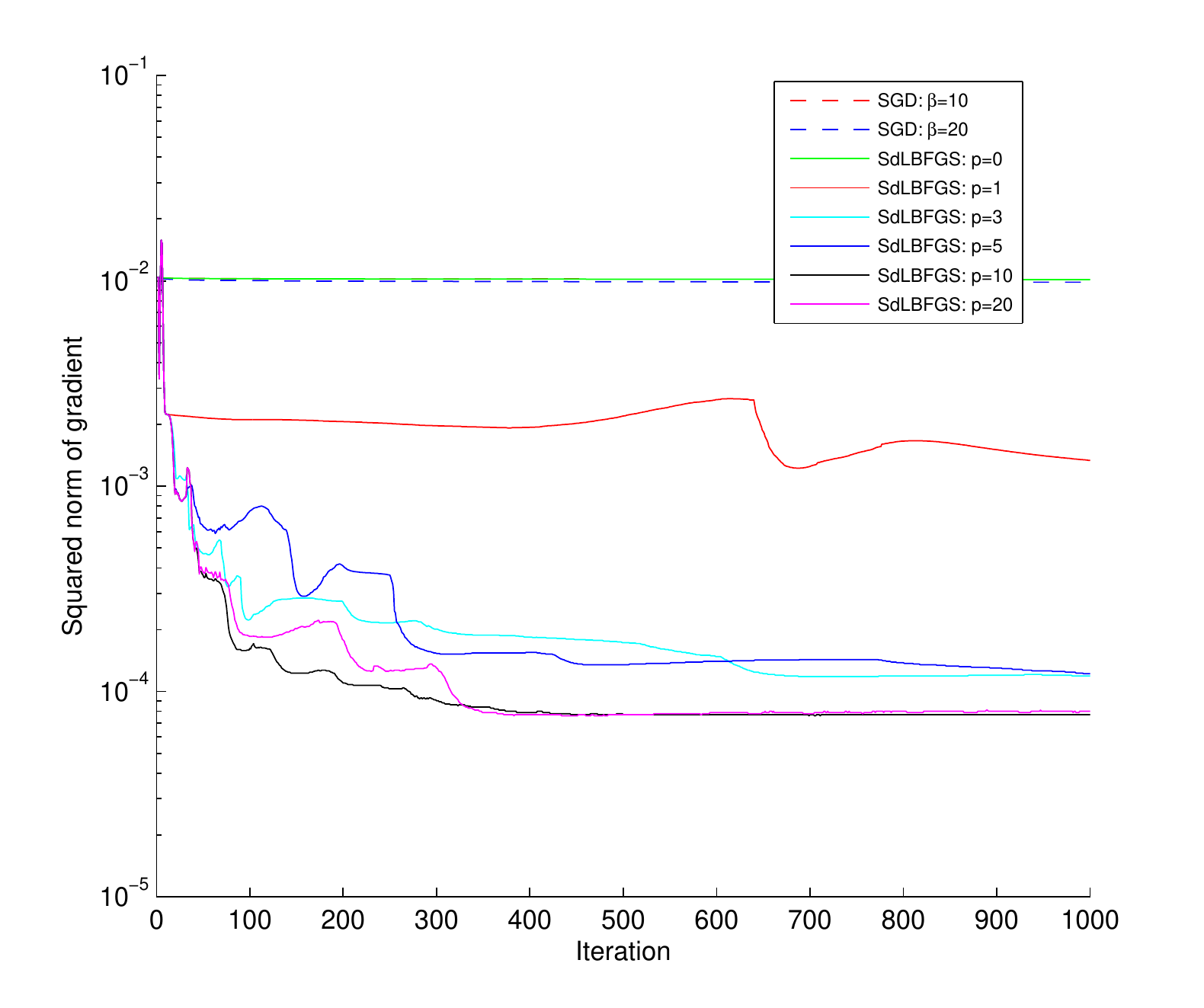}
 \includegraphics[scale=0.4]{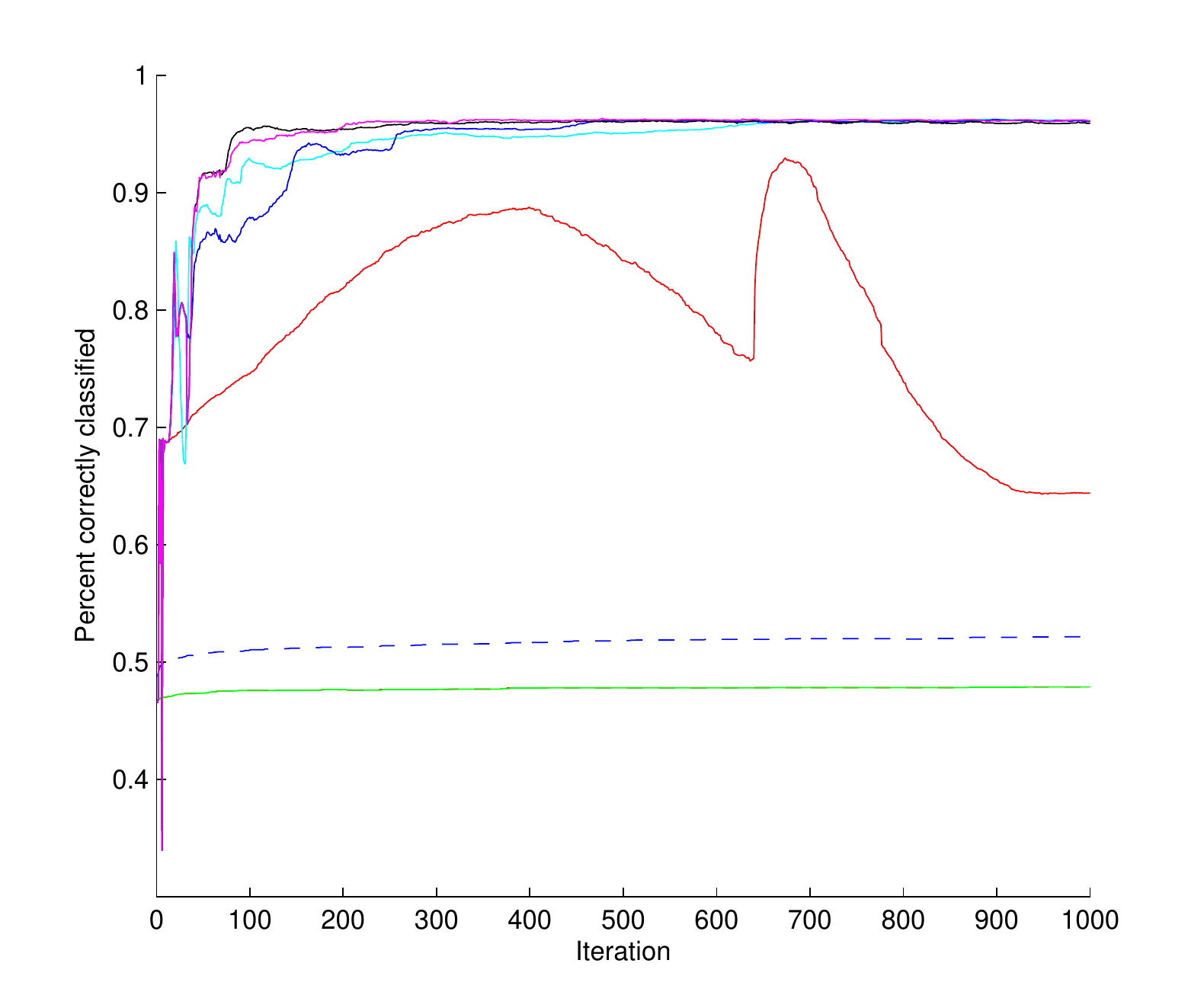}
}
\caption{Comparison of SdLBFGS variants with different memory sizes on the RCV1 dataset. The step size of SdLBFGS was $\alpha_k=10/k$ and the batch size was $m=100$.}
\label{fig2-RCV}
\end{figure}

\begin{figure}[H]
\centering{\vspace{-8mm}
 \includegraphics[scale=0.4]{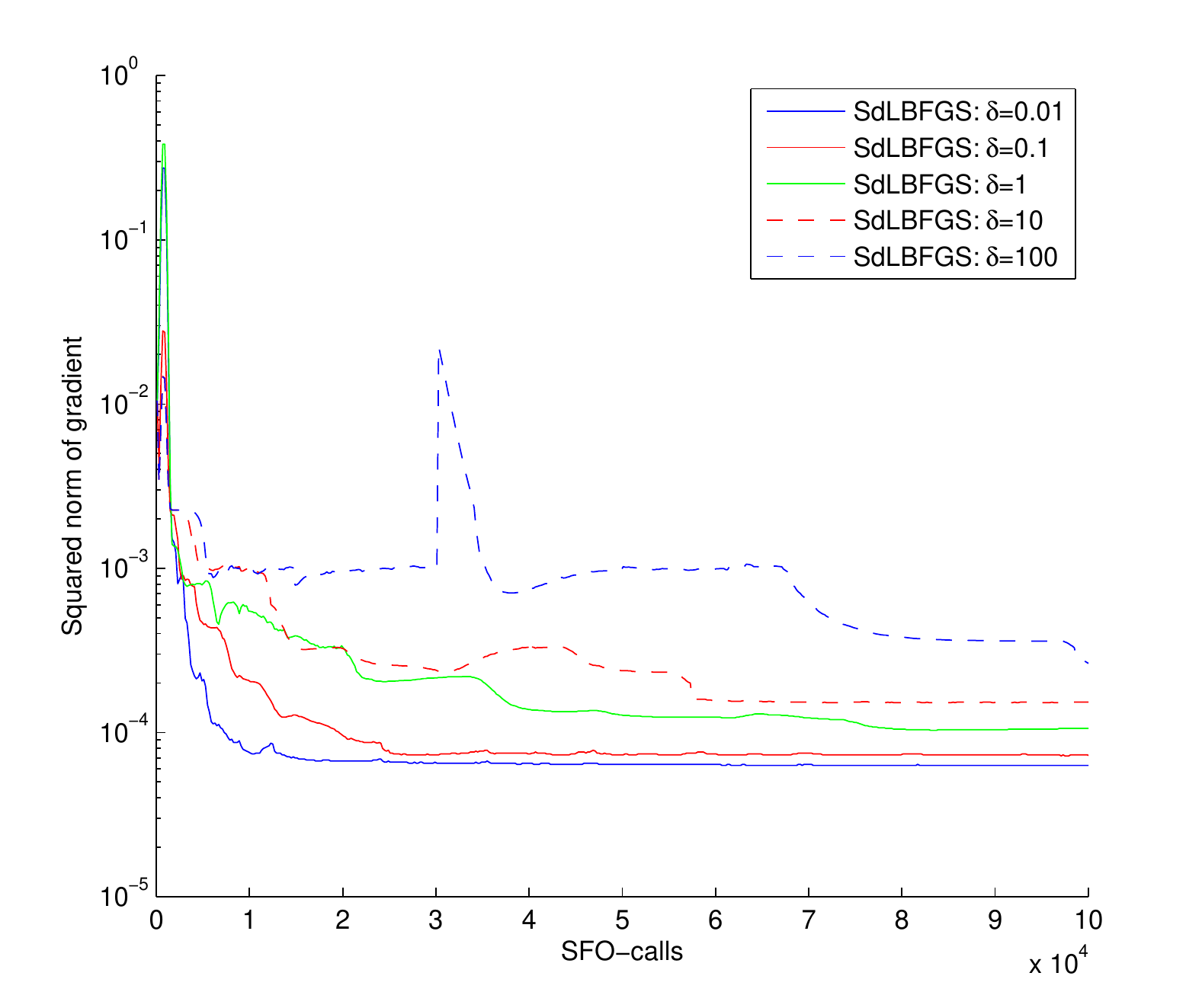}
\includegraphics[scale=0.4]{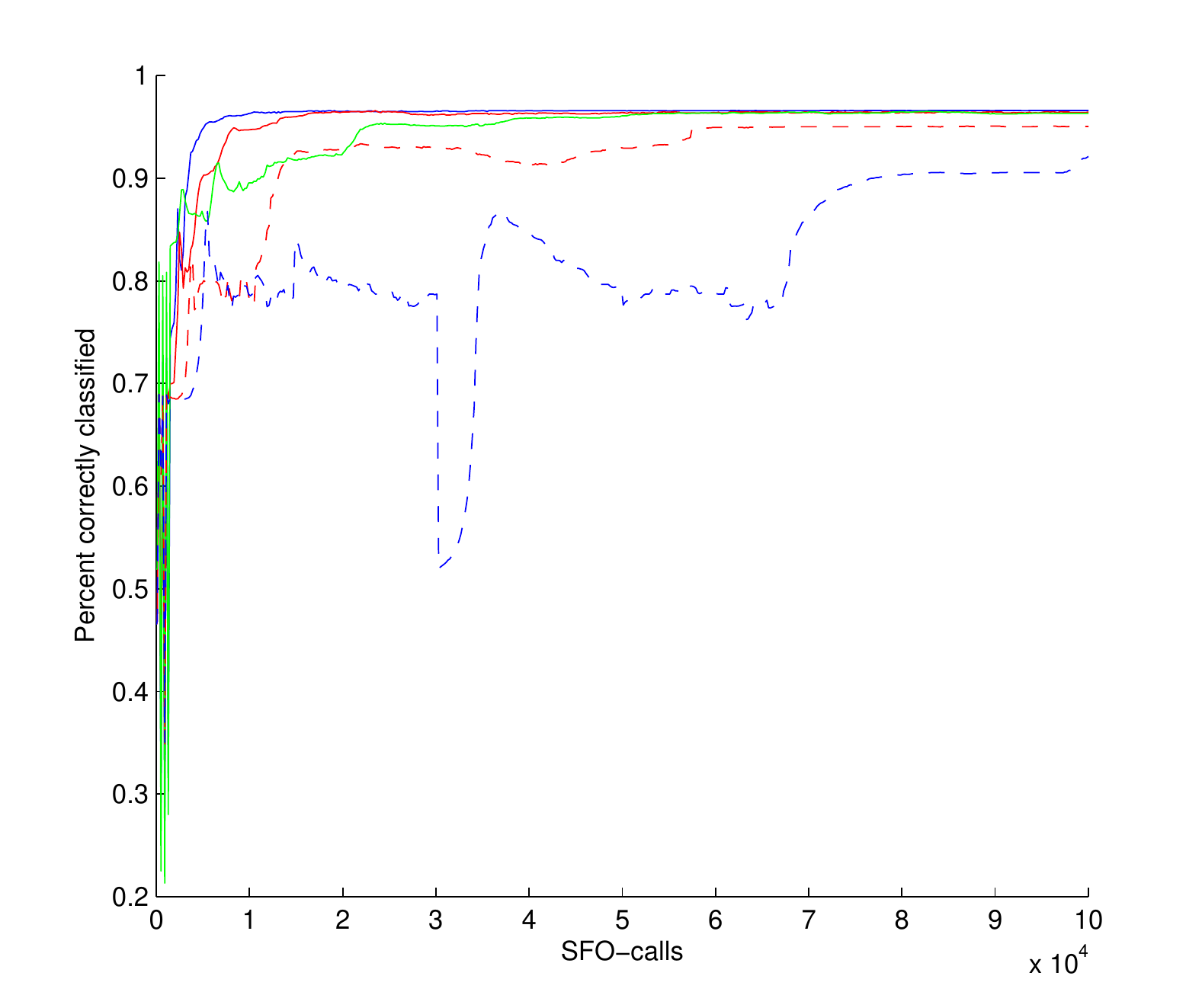}
}
\caption{Comparison on a RCV data set of SdLBFGS variants with different initial Hessian approximations, by varying the value of $\delta$, with respect to the squared norm of the gradient and the percent of correctly classified data, respectively. A step size of $\alpha_k = 10/k$, memory size of $p=10$ and batch size of $m=100$ was used for all SdLBFGS variants.}
\label{RCV-fig-delta}
\end{figure}

\begin{figure}[H]
\centering{\vspace{-8mm}
 \includegraphics[scale=0.4]{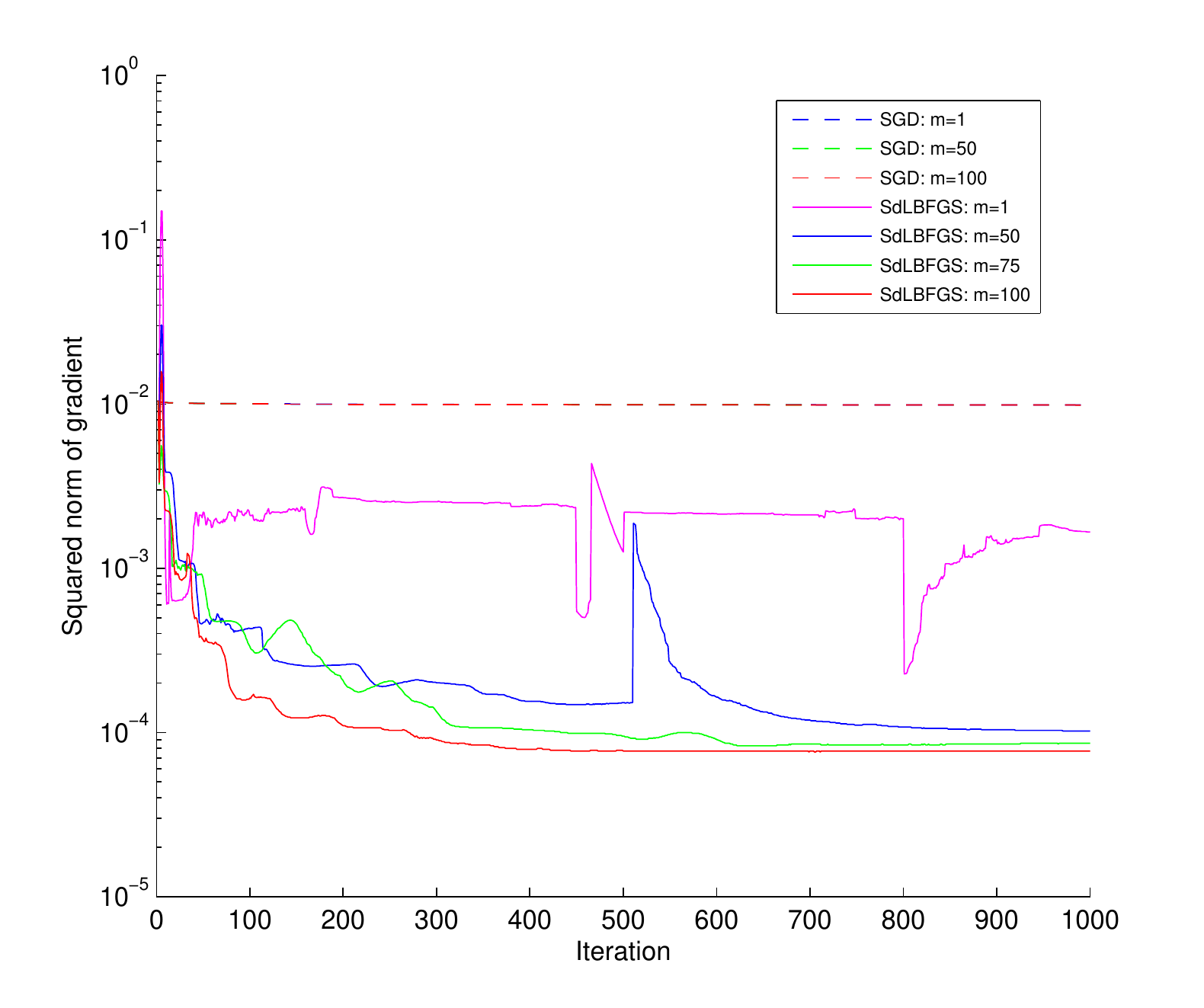}
 \includegraphics[scale=0.4]{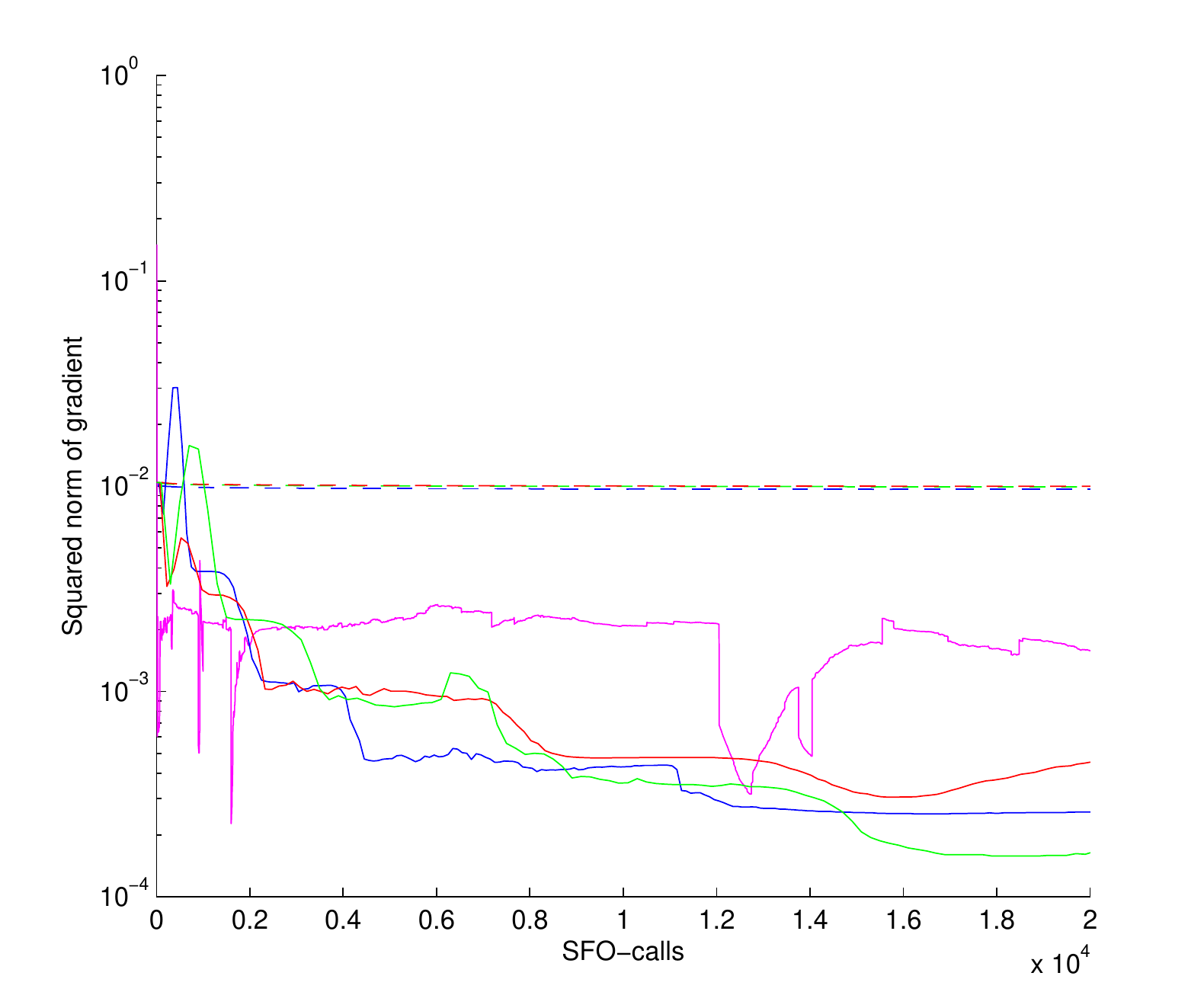}
}
\caption{Comparison of SGD and SdLBFGS with different batch sizes on the RCV1 dataset. For SdLBFGS the step size was $\alpha_k=10/k$ and the memory size was $p=10$. For SGD the step size was $\alpha_k=20/k$.}
\label{fig3-RCV}
\end{figure}

\begin{figure}[H]
\centering{\vspace{-8mm}
 \includegraphics[scale=0.4]{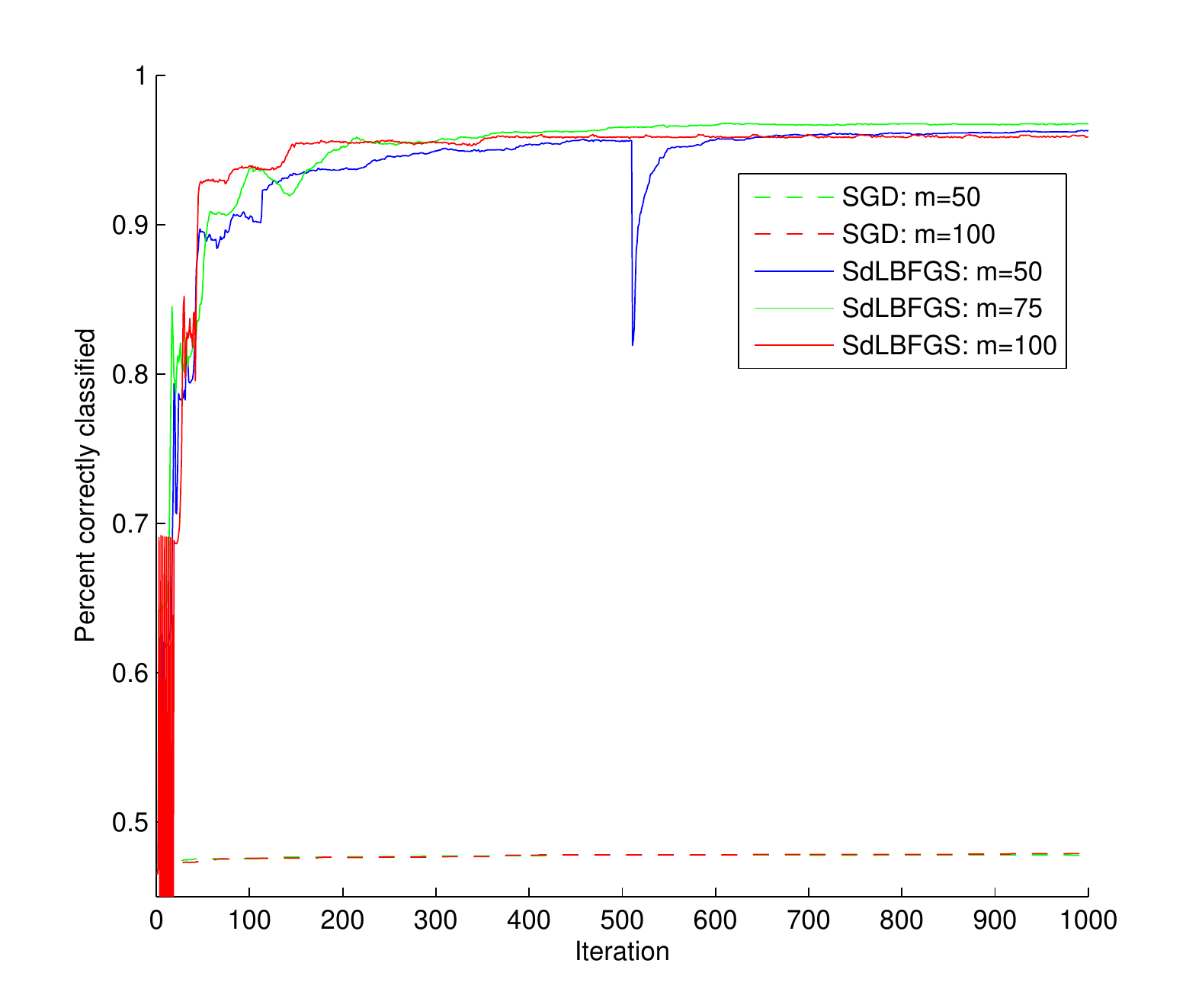}
}
\caption{Comparison of correct classification percentage by SGD and SdLBFGS with different batch sizes on the RCV1 dataset. For SdLBFGS the step size was $\alpha_k=10/k$ and the memory size was $p=10$. For SGD the step size was $\alpha_k=20/k$.}
\label{fig4-RCV}
\end{figure}

\begin{figure}[H]
\centering{\vspace{-8mm}
 \includegraphics[scale=0.4]{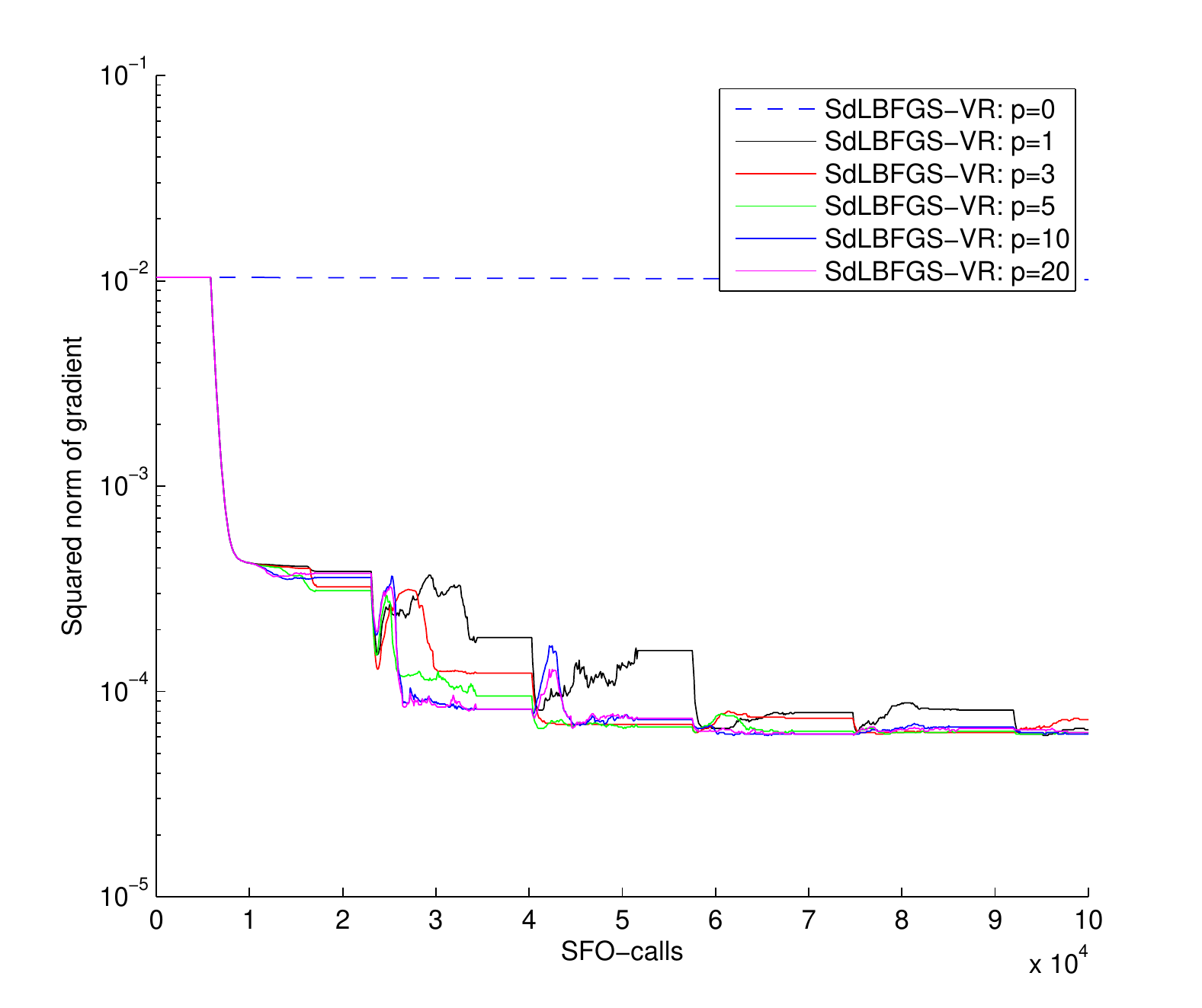}
 \includegraphics[scale=0.4]{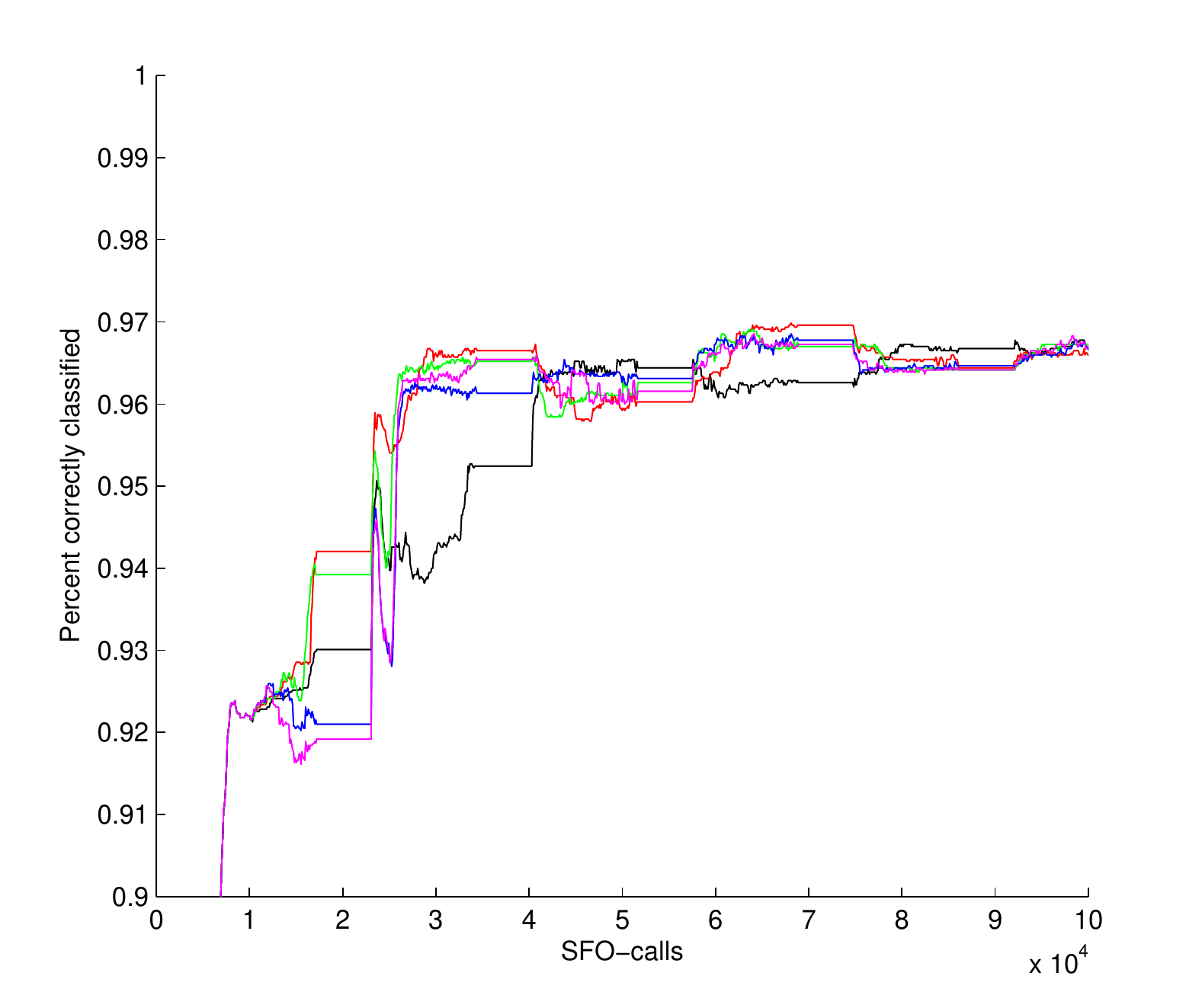}
}
\caption{Comparison of SdLBFGS-VR with different memory sizes $p$ on RCV1 data set. The step size is set as $\alpha=0.1$. We set $q=115$, and batch size $m=50$, so that $T\approx qm$.}
\label{RCV_diff-p}
\end{figure}

\begin{figure}[H]
\centering{\vspace{-8mm}
 \includegraphics[scale=0.4]{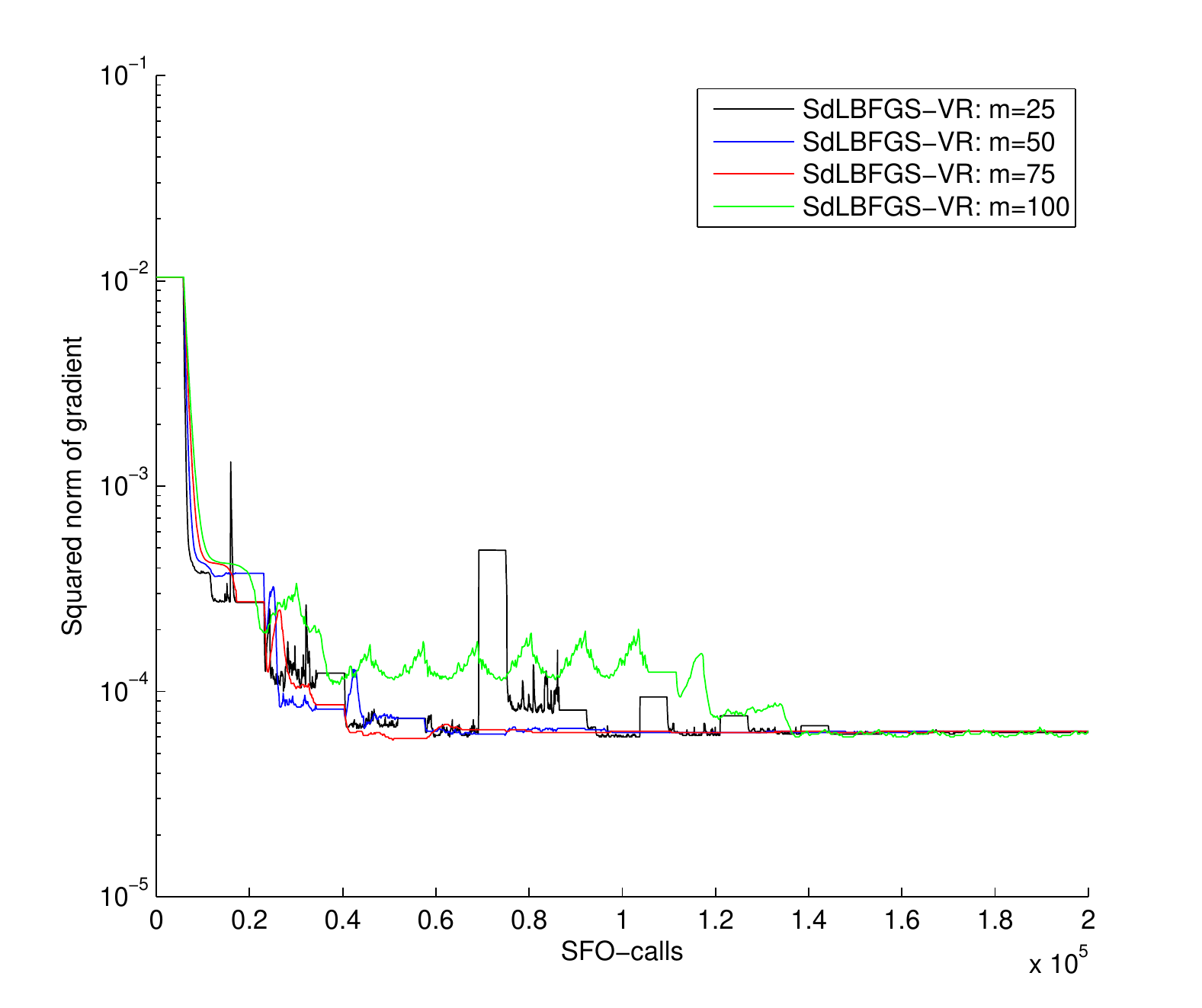}
 \includegraphics[scale=0.4]{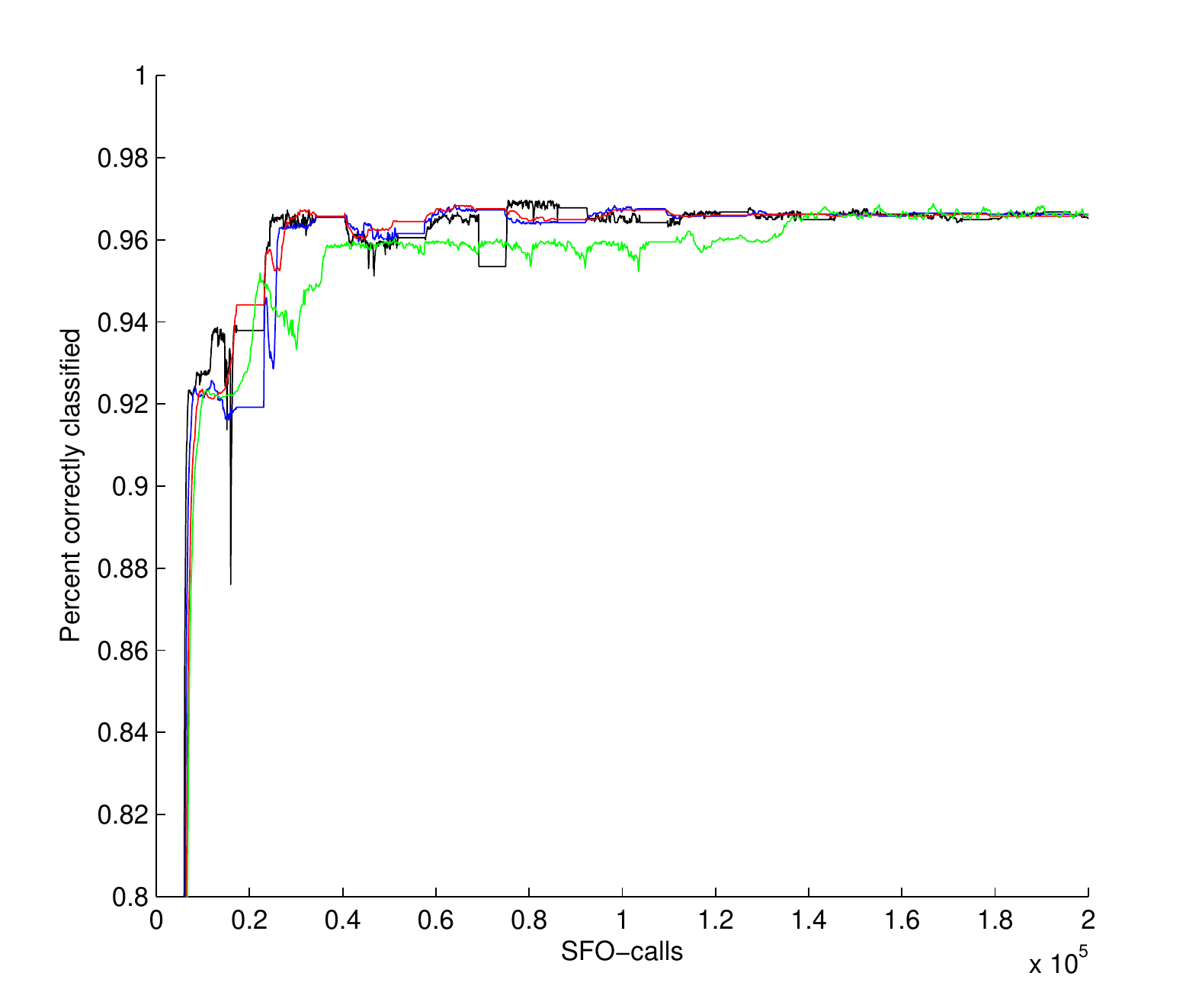}
}
\caption{Comparison of SdLBFGS-VR with different batch sizes on RCV1 data set. We set the step size as $\alpha=0.1$ and the memory size as $p=20$.}
\label{RCV-VR-diff-m}
\end{figure}

\begin{figure}[H]
\centering{\vspace{-8mm}
 \includegraphics[scale=0.4]{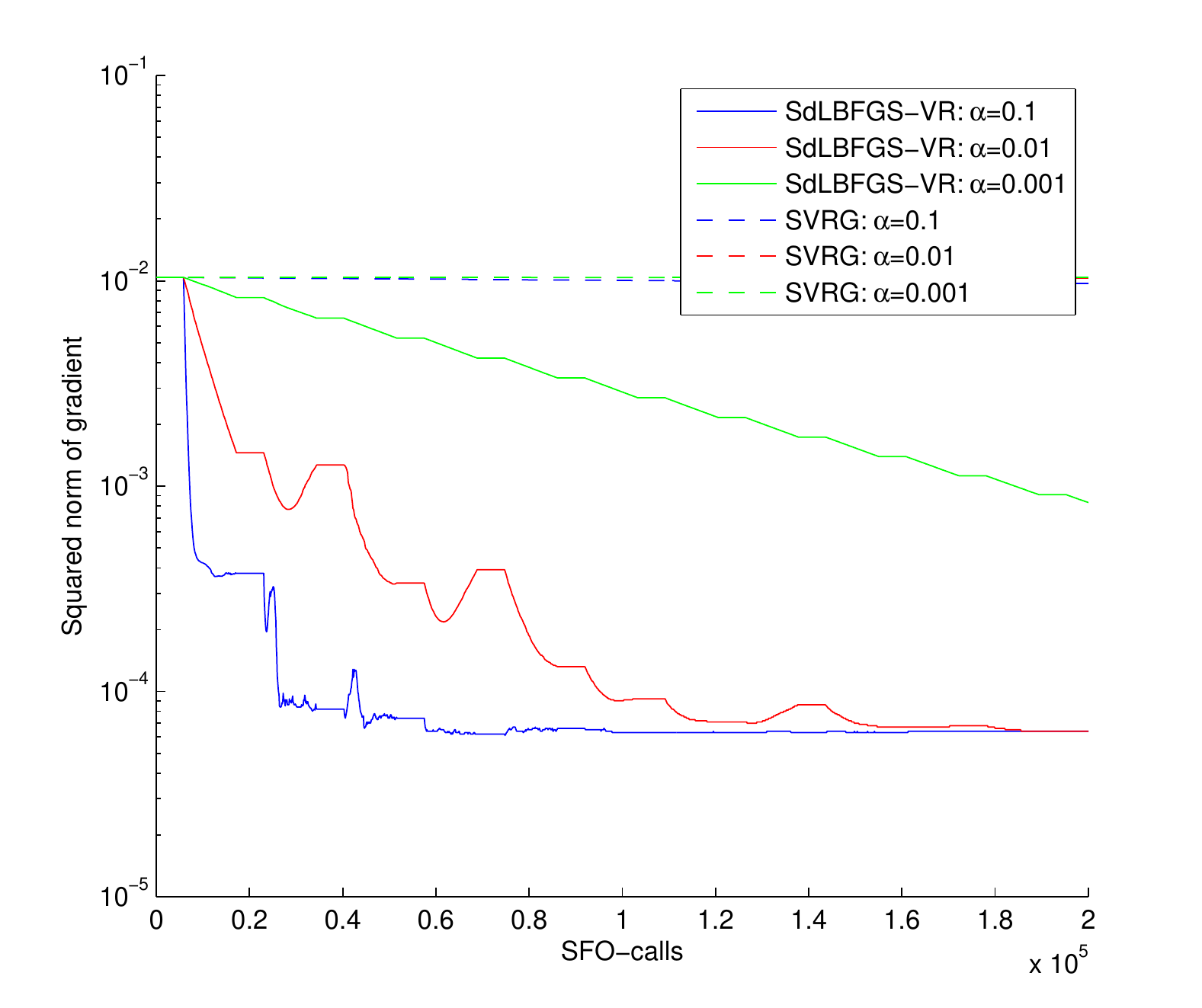}
 \includegraphics[scale=0.4]{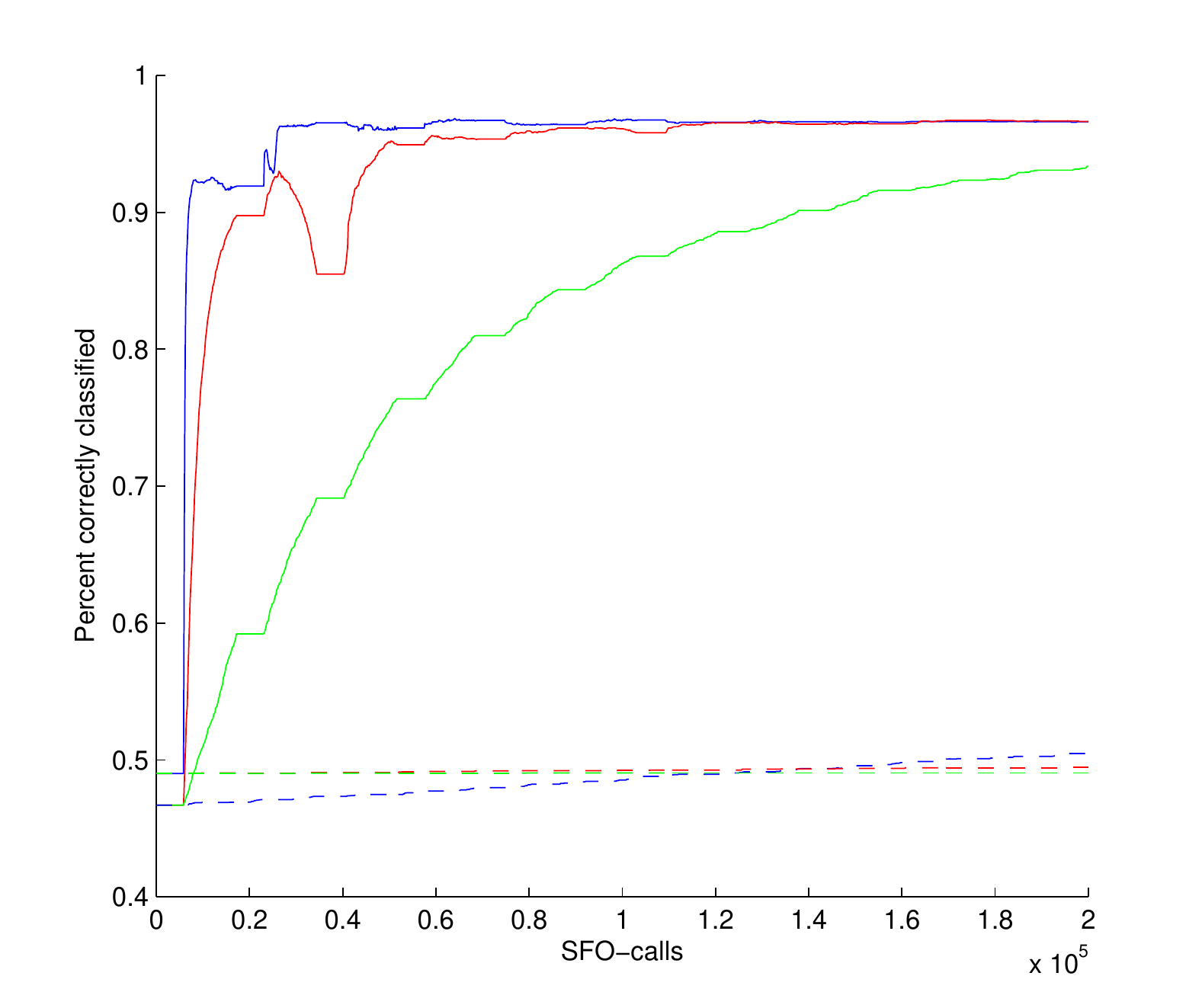}
}
\caption{Comparison of SdLBFGS-VR and SVRG on RCV1 data set with different step sizes. The memory size of SdLBFGS-VR is set as $p=20$. For both algorithms, we set the inner iteration number $q=115$ and the batch size $m=50$.}
\label{RCV_diff-beta2}
\end{figure}

\begin{figure}[H]
\centering{\vspace{-8mm}
\includegraphics[scale=0.4]{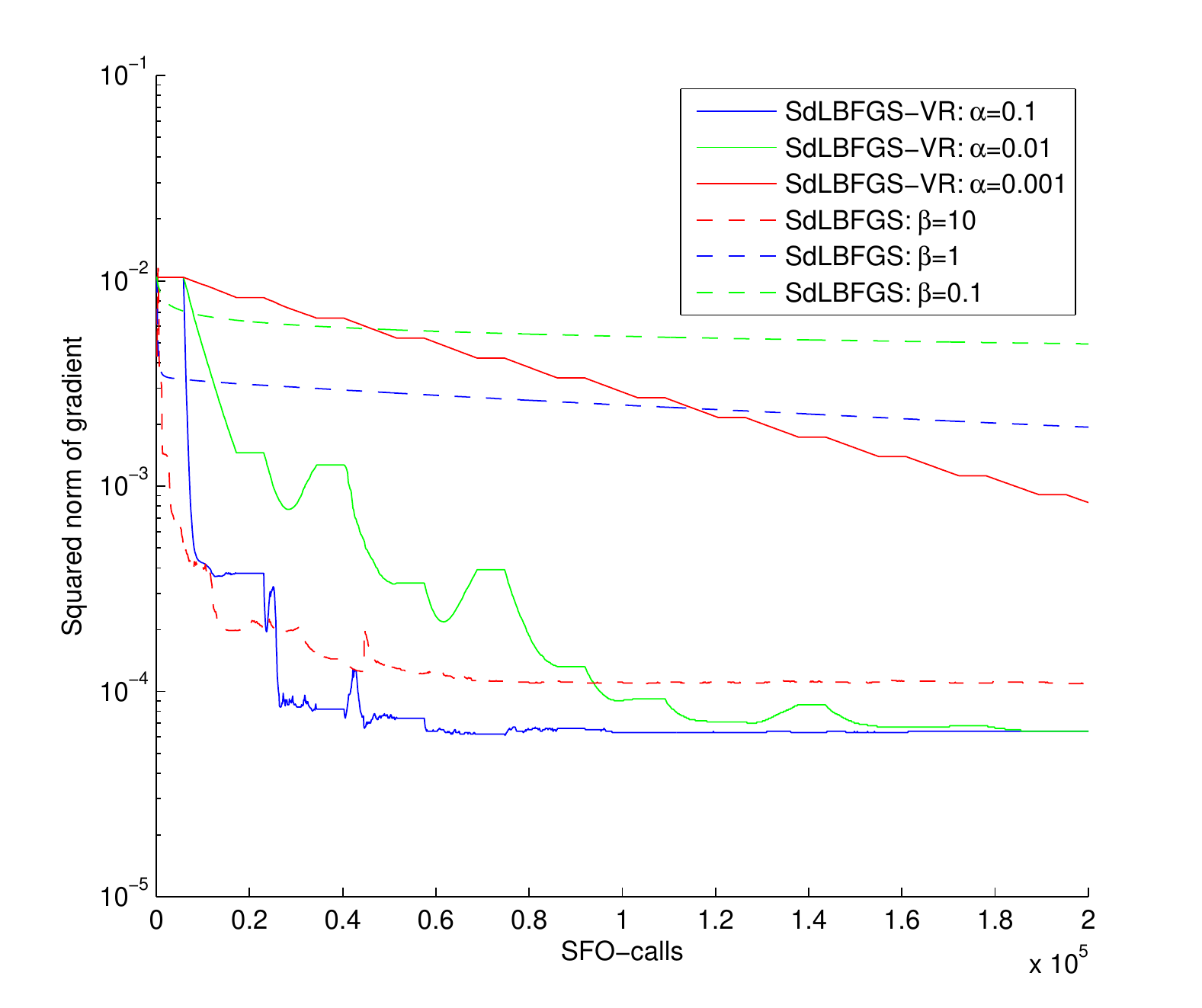}
 \includegraphics[scale=0.4]{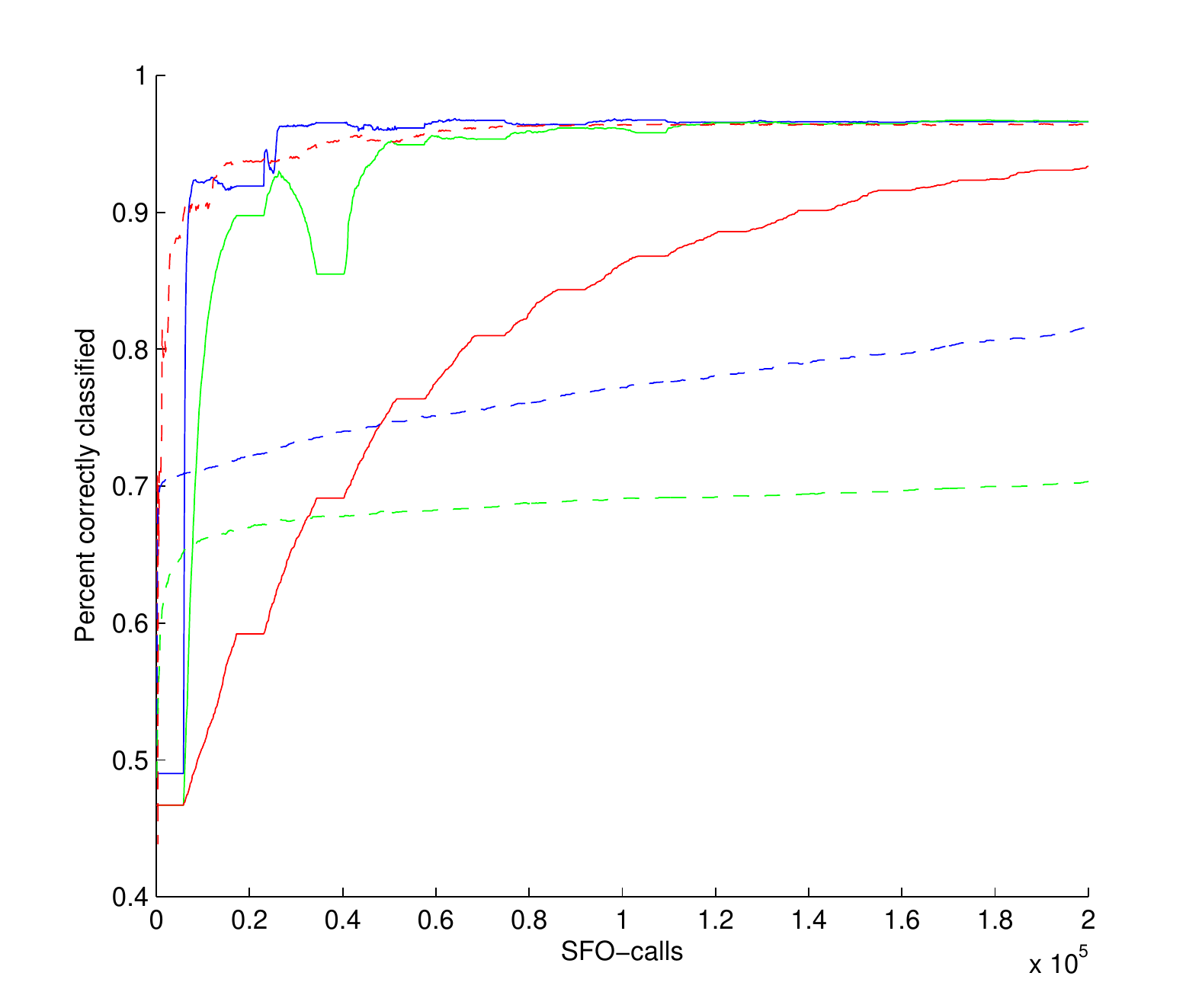}
}
\caption{Comparison of SdLBFGS-VR and SdLBFGS on RCV1 dataset. For both algorithms, the memory size is $p=20$ and batch size is $m=50$. For SdLBFGS-VR, the inner iteration number is $q=115$.}
\label{RCV-VR-diff-beta}
\end{figure}

\begin{figure}[H]
\centering{\vspace{-8mm}
 \includegraphics[scale=0.3]{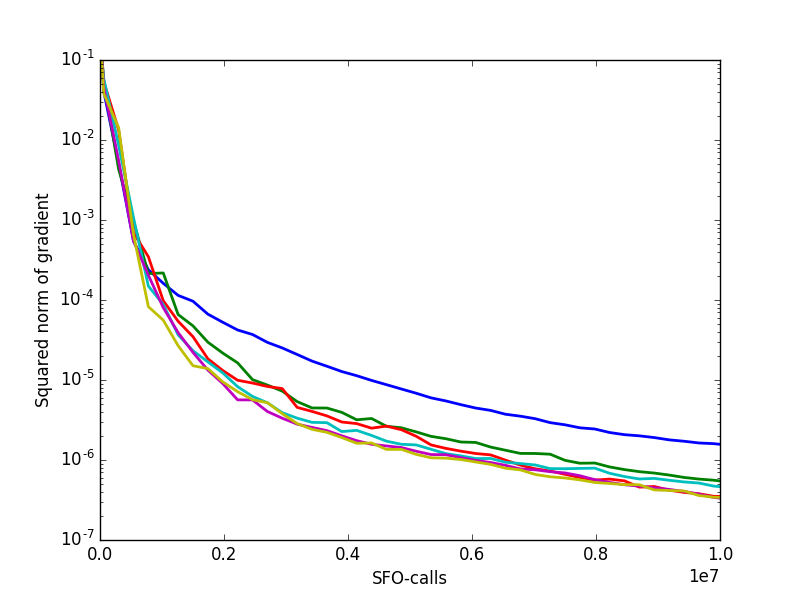}
 \includegraphics[scale=0.3]{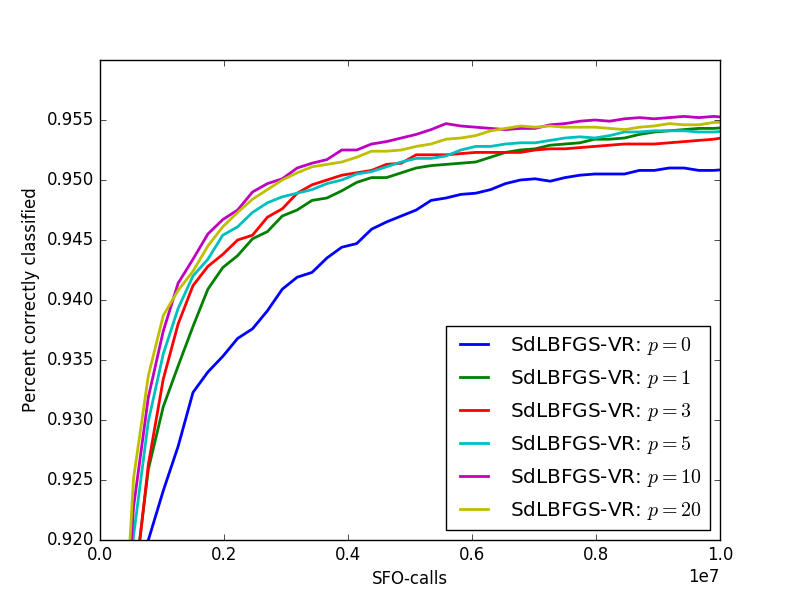}
}
\caption{Comparison of SdLBFGS-VR with different memory sizes $p$ on MNIST dataset. The step size is set as $\alpha=0.1$. We set $q=1200$, and batch size $m=50$, so that $T=qm$.}
\label{MNIST_diff-p}
\end{figure}

\begin{figure}[H]
\centering{\vspace{-8mm}
 \includegraphics[scale=0.3]{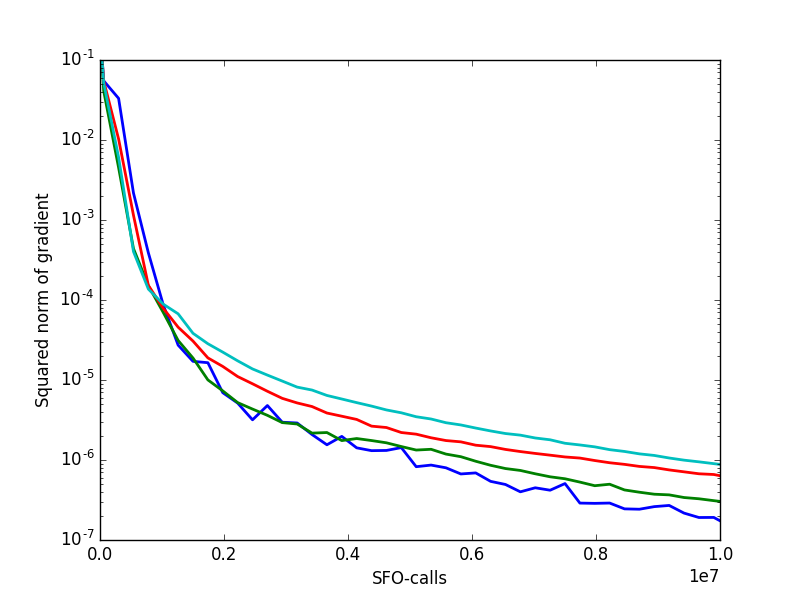}
 \includegraphics[scale=0.3]{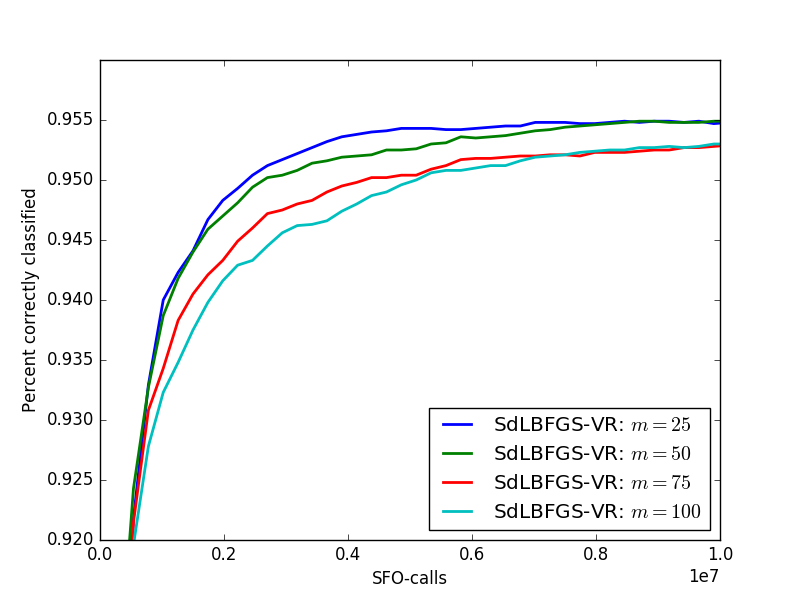}
}
\caption{Comparison of SdLBFGS-VR with different batch sizes on MNIST dataset. We set the step size as $\alpha=0.1$ and the memory size as $p=20$.}
\label{MNIST-VR-diff-m}
\end{figure}

\begin{figure}[H]
\centering{\vspace{-8mm}
 \includegraphics[scale=0.3]{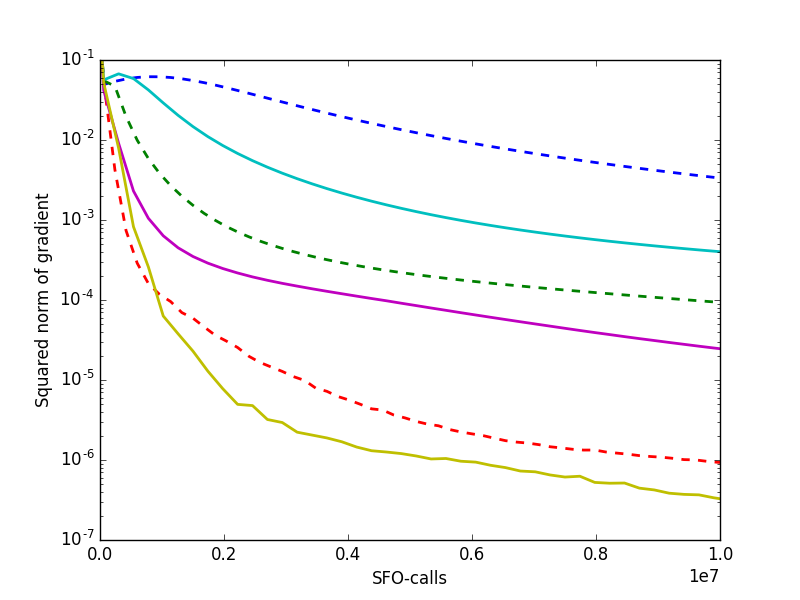}
 \includegraphics[scale=0.3]{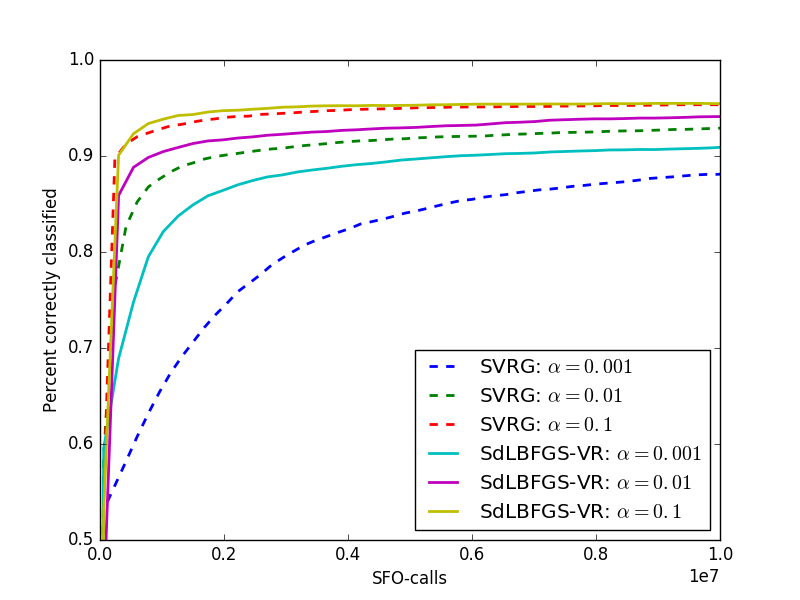}
}
\caption{Comparison of SdLBFGS-VR and SVRG on MNIST dataset with different step sizes. The memory size of SdLBFGS-VR is set as $p=20$. For both algorithms, we set the inner iteration number $q=1200$ and the batch size $m=50$.}
\label{MNIST_diff-beta2}
\end{figure}

\begin{figure}[H]
\centering{\vspace{-8mm}
\includegraphics[scale=0.3]{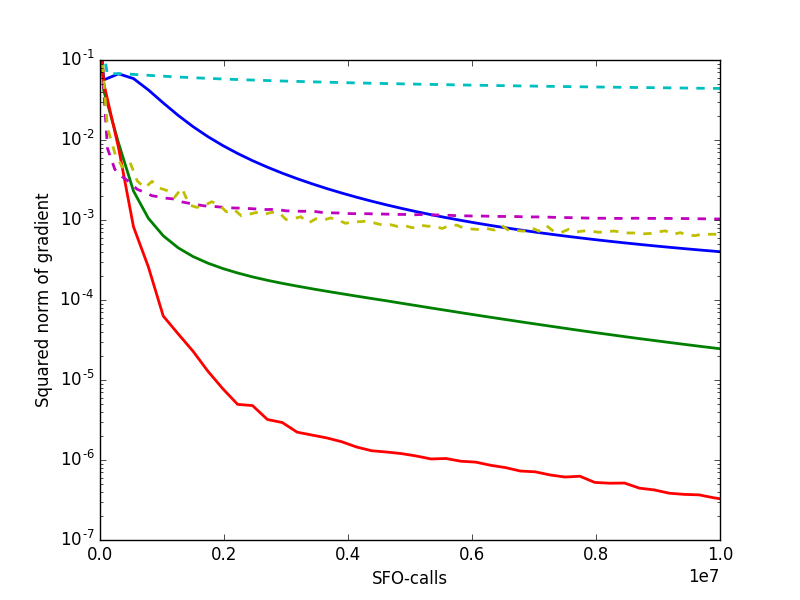}
\includegraphics[scale=0.3]{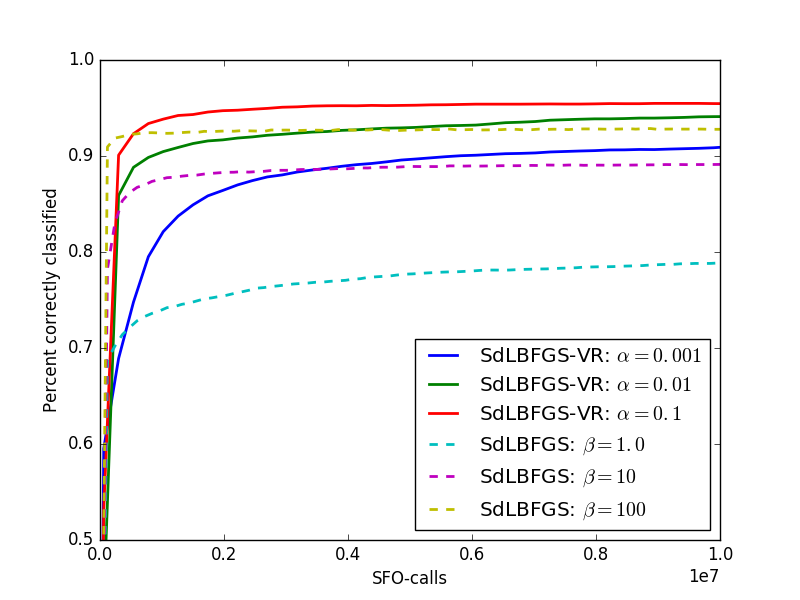}
}
\caption{Comparison of SdLBFGS-VR and SdLBFGS on MNIST dataset. For both algorithms, the memory size is $p=20$ and batch size is $m=50$. For SdLBFGS-VR, the inner iteration number is $q=1200$.}
\label{MNIST-VR-diff-beta}
\end{figure}


\end{document}